\newcommand{\bea}{\begin{eqnarray*}}
\newcommand{\eea}{\end{eqnarray*}}
\newcommand{\beq}{\begin{eqnarray}}
\newcommand{\eeq}{\end{eqnarray}}
\newcommand{\RR}{\mathbb{R}}
\newcommand{\NN}{\mathbb{N}}
\newcommand{\CC}{\mathbb{C}}
\newcommand{\ZZ}{\mathbb{Z}}
\newcommand{\TT}{\mathbb{T}}
\newcommand{\EE}{\mathbb{E}}
\newcommand{\PP}{\mathbb{P}}
\newcommand{\cA}{\mathcal{A}}
\newcommand{\cB}{\mathcal{B}}
\newcommand{\cF}{\mathcal{F}}
\newcommand{\cJ}{\mathcal{J}}
\newcommand{\ddiv}{\mathop{\mathrm{div}}\nolimits}
\newcommand{\grad}{\mathop{\mathrm{grad}}\nolimits}
\newcommand{\wt}{\widetilde}
\newcommand{\bigtimes}{\mathop{\mbox{\Large$\times$}\!\!\!\!\!\!\!\phantom{\bigotimes}}\limits}
\renewcommand{\r}{\right}
\renewcommand{\l}{\left}
\newcommand{\la}{\l \langle}
\newcommand{\ra}{\r \rangle}
\newcounter{smalllist}
\newcommand{\1}{\mathbbm 1} 
\newcommand{\rel}{{\mathrm{rel}}}
\newcommand{\per}{{\mathrm{per}}}
\newcommand{\sob}{{\mathrm{sob}}}
\newcommand{\eff}{{\mathrm{eff}}}
\newcommand{\Dir}{{\mathrm{D}}}
\newcommand{\dom}{\mathop{\mathrm{dom}}}
\newcommand{\injrad}{\mathop{\mathrm{inj\, rad}}}
\newcommand{\bd}{\partial}
\renewcommand{\phi}{\varphi}
\renewcommand{\epsilon}{\varepsilon}  
\newcommand{\eps}{\epsilon}
\newcommand{\Cci}[1]{C_{\mathrm c}^\infty ({#1})} 
\newcommand{\Cb}[1]{C_{\mathrm b} ({#1})} 
\newcommand{\dd}{\,\mathrm{d}}
\newcommand{\und}{\quad\text{and}\quad}
\newcommand{\vol}{\mathrm{vol}}
\newcommand{\supp}{\mathop{\mathrm{supp}}}
\newcommand{\Id}{{\mathop{\mathrm{Id}}}}
\newcommand{\Tr}{\mathop{\mathrm{Tr}}\nolimits}
\DeclareMathOperator{\meas}{\mathrm{meas}}
\newtheorem{thm}{Theorem}[section]
\newtheorem{prp}[thm]{Proposition}
\newtheorem{lem}[thm]{Lemma}
\newtheorem{cor}[thm]{Corollary}
\theoremstyle{definition}
\newtheorem{dfn}[thm]{Definition}
\newtheorem{asp}[thm]{Assumption}
\theoremstyle{remark}
\newtheorem{rem}[thm]{Remark}
\newtheorem{exm}[thm]{Example}
\newcommand{\Hm}[1]{\leavevmode{\marginpar{\tiny%
$\hbox to 0mm{\hspace*{-0.5mm}$\leftarrow$\hss}%
\vcenter{\vrule depth 0.1mm height 0.1mm width \the\marginparwidth}%
\hbox to 0mm{\hss$\rightarrow$\hspace*{-0.5mm}}$\\\relax\raggedright #1}}}
\begin{document}
\title[Integrated density of states for random metrics]{Continuity
  properties of the integrated density of states on manifolds}
\author[D.~Lenz]{Daniel Lenz}
\author[N.~Peyerimhoff]{Norbert Peyerimhoff}
\author[O.~Post]{Olaf Post}
\author[I.~Veseli\'c]{Ivan Veseli\'c}

\address[D.~Lenz]{Fakult\"at f\"ur Mathematik, TU Chemnitz,
                   D-09107 Chemnitz, Germany}
\email{dlenz@mathematik.tu-chemnitz.de}
\urladdr{www.tu-chemnitz.de/mathematik/mathematische\_physik/}

\address[N.~Peyerimhoff]{Department of Mathematical Sciences,
  Durham University, Science Laboratories South Road, Durham, DH1
  3LE, Great Britain}
\email{norbert.peyerimhoff@durham.ac.uk}
\urladdr{www.maths.dur.ac.uk/~dma0np/}

\address[O.~Post]{Institut f\"ur Mathematik,
         Humboldt-Universit\"at zu Berlin,
         Rudower Chaussee~25,
         12489 Berlin,
         Germany}
\email{post@math.hu-berlin.de}
\urladdr{www.math.hu-berlin.de/\protect{\char126}post/}

\address[I.~Veseli\'c]{Fakult\"at f\"ur Mathematik,TU Chemnitz,
  D-09107 Chemnitz, Germany \\ \hspace*{1,3em}\& Emmy-Noether Programme of the DFG}
\urladdr{www.tu-chemnitz.de/mathematik/schroedinger/members.php}

\date{\jobname.tex, \today}

\keywords{integrated density of states, periodic and random operators,
  Schr\"odinger operators on manifolds, continuity properties}
\subjclass[2000]{35J10; 82B44}


\begin{abstract}
  We first analyze the integrated density of states (IDS) of periodic
  Schr\"odinger operators on an amenable covering manifold. A
  criterion for the continuity of the IDS at a prescribed energy is
  given along with examples of operators with both continuous and
  discontinuous IDS'.

  Subsequently, alloy-type perturbations of the periodic operator are
  considered. The randomness may enter both via the potential and the
  metric. A Wegner estimate is proven which implies the continuity of
  the corresponding IDS. This gives an example of a discontinuous
  "periodic" IDS which is regularized by a random perturbation.
\end{abstract}
\maketitle


\section{Introduction}
This paper is devoted to the study of continuity properties of the
integrated density of states (IDS) of ergodic Schr\"odinger operators
on manifolds.  The IDS is a distribution function introduced in the
quantum theory of solids which measures the number of electron levels
per unit volume up to a given energy.  It allows to calculate all
basic thermodynamic properties of the corresponding non-interacting
electron gas, like e.g.~the free energy.

This article is concerned with the H\"older continuity of the IDS for
particular random Schr\"odinger operators on manifolds. The continuity
of the IDS is a matter of interest both for physicists
(e.g.~\cite{Wegner-81}) and geometers (e.g.~\cite{DodziukLMSY-03}). It
has been intensely studied in the theory of localization for random
Schr\"odinger operators, see e.g.~the accounts in
\cite{CyconFKS-87,CarmonaL-90,PasturF-92,Stollmann-01,Veselic-06b}. In
a subsequent paper we will study the H\"older continuity of the IDS
for quantum graphs with randomly perturbed lengths of edges.
See~\cite{HelmV} for a Wegner estimate for alloy type potentials on metric graphs
and \cite{hislop-post:pre06,ExnerHS} for results on localization for
certain quantum graphs.

\emph{Localization} is the phenomenon, that certain quantum
Hamiltonians, describing disordered solid systems, exhibit pure point
spectrum almost surely.  Other ergodic operators exhibit purely
continuous spectrum, while it is conjectured, that for a large class
of operators pure point and continuous spectra should coexist, with a
(or several) sharp energy value separating them. This energy is called
\emph{mobility edge}.

Although the misconception that the IDS has a singularity of some kind
at the mobility edge was discarded by Wegner in \cite{Wegner-81},
there is still a strong relation between properties of the IDS and
localized states (corresponding to p.p.~spectrum).  Namely, the proof
of localization with the so far most widely applicable method, the
\emph{multi scale analysis} introduced by Fr\"ohlich and
Spencer~\cite{FroehlichS-83}, uses as a key ingredient an upper bound
on the \emph{density of states}. This function is the derivative of
the IDS and its existence (for certain models) may be proved by using
an estimate going back to Wegner \cite{Wegner-81}. 

\smallskip

For periodic operators in Euclidean geometry --- the most regular
form of ergodic Schr\"odinger operators --- the continuity of the IDS
is established under mild conditions on the potential, see
e.g.~\cite{Shen-02} and the references therein.

A substantial body of literature is devoted to randomly perturbed
periodic operators, where the perturbation is of alloy type. Under
certain conditions it is known that these random operators have also
an continuous IDS, i.e., that the random perturbation conserves the
continuity of the IDS. {}From the physical point of view it is
actually expected that the IDS of the random operators should be even
more regular than the one of periodic ones. However, only for certain
discrete models, better regularity of the IDS than continuity has been
proven, see for instance \cite{SimonT-85,ConstantinescuFS-84}.

For more general geometries than $\RR^d$ the situation is somewhat
different. In this situation even the periodic Laplace-Beltrami
operator (without any potential) on an abelian covering may have
$L^2$-eigenfunctions, as was already indicated in \cite{Sunada-88},
referring to an example in~\cite{KobayashiOS-89}. This is equivalent
to a discontinuity of the IDS (cf.~Proposition~\ref{p-IDS}).  Other
cases with jumps in the IDS are given by quasi-crystals
\cite{KlassertLS-03,LenzS}, periodic operators on covering graphs and
percolation Hamiltonians \cite{Veselic-05a,Veselic-05b}, random
necklace models \cite{KostrykinS-04} or fourth order differential
operators, see e.g.~\cite{Kuchment-93}. However, in particular cases,
the continuity of the IDS of periodic Schr\"odinger operators on an
abelian covering manifold can be established using a criterion of
Sunada (cf.~\cite{Sunada-90}).

Our main results are Wegner estimates for particular random
perturbations of periodic operators on manifolds
(cf.~Theorems~\ref{t-RAPWE} and~\ref{t-RAMWE}). The perturbation is
assumed to be of alloy-type and may enter the operators via the
\emph{potential} or the \emph{metric}, defined in the models~RAP
and~RAM (see Definitions~\ref{d-RAP} and~\ref{d-RAM}).  These
estimates imply the continuity of the IDS, even if the unperturbed,
periodic operator had a discontinuous IDS. Thus, while alloy type
perturbations \emph{preserve} the continuity of the IDS in the
Euclidean case, they are even IDS-continuity \emph{improving} for
certain operators on manifolds.

The paper is organized as follows: In the following section we
introduce our models RAP and RAM and state the main results.
Section~\ref{perOp} is devoted to periodic operators with abelian
covering group.  In Sections~\ref{s-RAPWE} and~\ref{s-RAMWE} we prove
Wegner estimates for both models RAP and RAM, respectively. For this
aim, we need a (super) trace class estimate of an effective
perturbation in each model (see Propositions~\ref{p-Veff-RAP}
and~\ref{p-Veff-RAM}).  The proof for this trace class estimate is
given in Sections~\ref{s-Trace}--\ref{s-TraceRes}. In the appendix we
provide necessary uniform results on Sobolev spaces on families of
manifolds which are used throughout this article.


\section{Model and results}
\label{ModRes}

Throughout the paper we will consider the following \emph{geometric
  situation}:

Let $(X,g_0)$ be a Riemannian manifold with a smooth metric $g_0$ and
$\Gamma$ an group acting freely, cocompactly and properly
discontinuously by isometries on $(X,g_0)$ such that the quotient
$M=X/\Gamma$ is a compact Riemannian manifold of the same dimension as
$X$. The stated assumptions imply that $\Gamma$ is a finitely generated group.
Typically, $X$ will be non-compact and thus $\Gamma$ infinite.

Let $(\Omega, \cB_\Omega, \PP)$ be a probability space on which
$\Gamma$ acts ergodically by measure preserving transformations
$\gamma \colon \Omega \to \Omega, \gamma \in \Gamma$, i.e., any
$\Gamma$-invariant set $B \in \cB_{\Omega}$ ($\gamma B = B$ for all
$\gamma \in \Gamma$) has probability $0$ or $1$. The expectation with
respect to $\PP$ is denoted by $\EE$.

We will be given two types of random objects over $(\Omega,
\cB_\Omega, \PP)$. The first is a family of random potentials on $X$,
the second is a family of random metrics. Put together, they will give
rise to a family of random operators whose study is our primary
concern here. Note that this includes the case that $\Omega$ contains
only one element and thus the operator family consists of a single
periodic operator.

As for the random geometry, the manifold $X$ is equipped with a family
of metrics $\{g_\omega\}_{\omega\in\Omega}$ with corresponding volume
forms $\vol_\omega$. With respect to a fixed periodic metric $g_0$, we
define a smooth section $A_\omega$ in the bundle $L(TX) \cong T^*\!X
\otimes TX$ via
\begin{equation*}
  g_\omega(x)(v,v)=g_0(x)(A_\omega(x)v,v)
\end{equation*}
for all $x \in X$, $v \in T_xX$ and $\omega \in \Omega$. In the
sequel, we will often suppress the dependence on $x \in X$.
We denote by $\Delta_\omega$ the 
non-positive Laplace operator on the mannifold $(X,g_\omega)$.

We need the following definition:

\begin{dfn} 
  \label{def:rel.bdd} We say that a family $\{g_\omega\}_\omega$ of
  metrics on $X$ is \emph{relatively bounded w.r.t.} the metric $g_0$
  on $X$ if for each $k \in \NN$ there are constants $C_{\rel,k} > 0$
  such that
  \begin{equation}
  \label{quasiisom}
    C_{\rel,0}^{-1}\, g_0(v,v) \le
    g_\omega(v,v)=g_0(A_\omega v,v) \le
    C_{\rel,0} \, g_0(v,v)
  \end{equation}
  for all $v \in TX$ and
  \begin{equation} \label{gradbound}
    \vert \nabla_0^k A_\omega(x) \vert_0 \le C_{\rel,k}
  \end{equation}
  for all $x \in X$ and all $\omega \in \Omega$.  Here
  $\nabla_0^k$ denotes the iterated covariant derivative w.r.t $g_0$
  and $| \cdot |_0$ is the (pointwise) norm w.r.t $g_0$ in the
  appropriate tensor bundle of $T^*\!X$ and $TX$. 
\end{dfn}

Since the periodic manifold $(X,g_0)$ is of bounded geometry, the
relative boundedness of the family $\{g_\omega\}_\omega$ implies that
$(X,g_\omega)$ is also of bounded geometry with constants $(r_0,C_k)$
\emph{independent} of $\omega$, as shown in Lemma~\ref{lem:rel.bd}.

Note that the lower bound in \eqref{quasiisom} implies that we have in
analogy to \eqref{gradbound} also a uniform bound on the derivatives
of $A^{-1}_\omega$, more precisely $\vert \nabla_0^k A_\omega^{-1}(x)
\vert_0 \le \wt C_{\rel,k} $.

The functions $x \mapsto (\det (A_\omega(x)))^{1/2}$ are positive,
smooth functions and satisfy
\begin{equation*}
  \int_X f(x) \bigl(\det (A_\omega(x))\bigr)^{1/2}\dd\vol_0(x) =
  \int_X f(x)  \dd\vol_\omega(x),
\end{equation*}
i.e., they are densities of the measures $\dd\vol_\omega$ with respect
to the unperturbed measure $\dd\vol_0$.  Consequently, for any
measurable subset $\Lambda\subset X$ and any pair $\omega^1,
\omega^2\in\Omega$ the operators
\begin{equation}
  \label{e-Somega}
  S_{\omega^1,\omega^2} \colon
        L^2(\Lambda,g_{\omega^1}) \to
        L^2(\Lambda,g_{\omega^2}), \quad
  S_{\omega^1,\omega^2}(f) =
  \bigl(\det (A_{\omega^1}A_{\omega^2}^{-1})\bigr)^{1/2}f
\end{equation}
are unitary, see also~\cite{LenzPV-04}. These operators will be used
in Section~\ref{s-RAMWE} to transform different Laplace-Beltrami
operators into the same Hilbert space.

The following conditions will be assumed throughout this paper:
\begin{asp}
  \label{main.ass}
  We assume that the family $\{g_\omega\}_\omega$ is jointly
  measurable, i.e., that $(\omega,v) \to g_\omega(x)(v,v)$ is
  measurable on $\Omega\times TX$.  In addition, we suppose that
  $\{g_\omega\}_\omega$ is relatively bounded in the sense of
  Definition \ref{def:rel.bdd} with respect to a fixed periodic metric
  $g_0$. Furthermore, we assume that the metrics are compatible in the
  sense that the covering transformations
  \begin{equation}
    \label{eq:covering.trafo}
    \gamma\colon (X,g_\omega) \to (X,g_{\gamma \omega}),
      \quad \gamma \colon x \mapsto \gamma x
  \end{equation}
  are isometries. Hence, the induced maps
  \begin{equation*}
    U_{(\omega,\gamma)}\colon
       L^2(X,g_{\gamma^{-1}\omega}) \to L^2(X,g_\omega), \qquad
    (U_{(\omega,\gamma)} f)(x) = f(\gamma^{-1}x)
  \end{equation*}
  are unitary operators
  between $L^2$-spaces over the manifolds
  $\{(X,g_\omega)\}_{\omega\in\Omega}$.

  As for the random potentials, let $V \colon \Omega \times X
  \longrightarrow [0,\infty[$ be jointly measurable and such that
  $V_\omega :=V(\omega,\cdot)$ is for all $\omega \in \Omega$
  relatively $\Delta_\omega$-bounded with relative bound strictly less
  than one.  Assume furthermore that $V(\gamma \omega, x)= V(\omega,
  \gamma^{-1} x)$ for arbitrary $x\in X$ and $\omega \in\Omega$.
\end{asp}

Given Assumption~\ref{main.ass}, we can now introduce the
corresponding random Schr\"odinger operator as
\begin{equation}
  \label{e-Homega}
   H_\omega := -\Delta_\omega + V_\omega \ge 0.
\end{equation}
In fact, these operators are defined by means of quadratic forms. For
more details we refer the reader to \cite{LenzPV-04}.  The operators
\eqref{e-Homega} satisfy the \textit{equivariance condition}
\begin{equation} \label{compcomp}
  H_\omega = U_{(\omega,\gamma)} H_{\gamma^{-1} \omega} U_{(\omega,\gamma)}^*,
\end{equation}
for all $\gamma \in \Gamma$ and $\omega \in \Omega$. Moreover, they
form a measurable family of operators in the sense of the next
definition as has been shown in Theorem~1 of~\cite{LenzPV-04}.

\begin{dfn}
  A family of selfadjoint operators $\{H_\omega\}_\omega$, where the
  domain of $H_\omega$ is a dense subspace of $L^2(D, g_\omega)$,
  is called {\em measurable family of operators} if

  \begin{equation}\label{weakmeas}
  \omega\mapsto \la f_\omega(\cdot), F(H_\omega) f_\omega(\cdot)\ra_\omega
  \end{equation}
  is measurable for all $F\colon \RR\rightarrow \CC$ bounded and
  measurable and all $f \colon \Omega\times X\rightarrow \RR$
  measurable with $f_\omega(\cdot) \in L^2(X,g_\omega)$ for every
  $\omega\in \Omega$.
\end{dfn}

This notion of measurability is consistent with the works of Kirsch
and Martinelli \cite{KirschM-82a,KirschM-82b} as discussed in
\cite{LenzPV-04}.  \medskip

A key object in our study is the \emph{integrated density of states}
(IDS). It will be defined next. Let $\cF \subset X$ be a
fundamental domain of $\Gamma$. We will need restrictions of operators
to agglomerates of translates of $\cF$.  For a finite set $I \subset
\Gamma$ define the agglomerate $\Lambda_0(I)$ of fundamental domains
associated with $I$ by
\begin{equation}
  \label{def:agglo}
  \Lambda_0(I) :=
  \bigcup_{\gamma\in I} \gamma \bar \cF \subset X.
\end{equation}

For technical reasons (e.g., the Sobolev extension
Theorem~\ref{thm:ext.op}), it is easier to work with a ``smoothed''
version $\Lambda(I)$ of the agglomerate $\Lambda_0(I)$, satisfying
$\Lambda(\gamma I)=\gamma \Lambda(I)$, and for some fixed radius $r>0$
the relation
\begin{equation}
  \label{e-agglo-incl}
  \Lambda_0(I) \subset \Lambda(I) \subset B_r(\Lambda_0(I)),
\end{equation}
where $B_r(A)$ denotes the open $r$-neighborhood of the set $A \subset
X$ with respect to the metric $g_0$. The construction of $\Lambda(I)$
is given in~\cite[pp.~593]{Brooks-81}. We show in
Lemma~\ref{lem:rel.bd.aggl} that $(\Lambda(I),g_0)$, and also
$(\Lambda(I),g_\omega)$, are of bounded geometry in the sense of
Definition~\ref{def:bdd.geo}, \emph{uniformly in $I$ and $\omega$}.

The restriction of $H_\omega$ to $\Lambda(I)$ with Dirichlet boundary
conditions is denoted by $H_\omega^I=H^{\Lambda(I)}_\omega$. The
corresponding spectral projections are denoted by $P_\omega^I$, i.e.
\begin{equation*}
   P_\omega^I \bigl({]}{-}\infty, E{[} \bigr):=
   \chi_{{]}{-}\infty, E{[}} (H_\omega^I)
\end{equation*}
and similarly $P_\omega\bigl({]}{-}\infty, E{[} \bigr):= \chi_{{]}{-}\infty, E{[}}(H_\omega)$.
We define the distribution function $ N_\omega^I$ on $\RR$ for
$H^I_\omega$ by
\begin{equation*}
  N_\omega^I (E) :=
      \frac 1{\vol_\omega \Lambda(I)} \Tr P_\omega^I \bigl({]}-\infty, E{[}\bigr).
\end{equation*}

The integrated density of states of the random operator
$\{H_\omega\}_\omega$ is defined as the distribution function
\begin{equation}
  \label{e-absIDS}
  N\colon \RR \to {[}0, \infty{[}, \quad N(E) :=
  \frac 1 {\EE[\vol_\bullet \cF] }
         \EE \Bigl[ \Tr\bigl( \chi_{\cF} \cdot P_\omega\bigl({]}{-}\infty, E{[} \bigr) \cdot \chi_\cF \bigr) \Bigr],
\end{equation}
where $\Tr$ is the trace in $L^2(\cF)$.

If the group $\Gamma$ is \emph{amenable} there exists a \emph{tempered
  F{\o}lner sequence}, i.e., an increasing sequence of finite,
non-empty subsets $I_l \subset\Gamma$, $l \in \NN$, with ``small
boundary'' cf.~\cite{Adachi-93,Lindenstrauss-01,LenzPV-04} for
details.

Theorem 4 in \cite{LenzPV-04} can be phrased as follows:

\begin{thm}
  \label{t-exhaustionIDS}
  Suppose that the transformation group $\Gamma$ of the covering $X
  \to M$ is amenable. Then at all continuity points $E$ of $N$ and for
  almost every $\omega$ the following convergence holds
  \begin{equation*}
    \lim_{l\to\infty} N_\omega^{I_l} (E) =
     \lim_{l\to\infty}\EE \bigl[ N_\bullet^{I_l} (E)\bigr] = N(E).
  \end{equation*}
\end{thm}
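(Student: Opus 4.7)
The plan is to adapt the standard thermodynamic limit strategy for ergodic Schr\"odinger operators, with care for the non-Euclidean geometry and the random metric. The result is essentially quoted from \cite{LenzPV-04}, so my proposal mirrors what I expect that proof to contain.

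First I would reduce the statement to a pointwise ergodic theorem. Introduce the local quantities
\begin{equation*}
  F_\omega(E) := \Tr\bigl(\chi_\cF \, P_\omega({]}{-}\infty,E{[})\,\chi_\cF \bigr),
  \qquad v_\omega := \vol_\omega \cF,
\end{equation*}
so that $N(E) = \EE[F_\bullet(E)]/\EE[v_\bullet]$. The equivariance relation \eqref{compcomp} together with the compatibility \eqref{eq:covering.trafo} of the covering transformations with the random metrics gives
\begin{equation*}
  \Tr\bigl(\chi_{\gamma\cF}\, P_\omega({]}{-}\infty,E{[}) \,\chi_{\gamma\cF}\bigr)
  = F_{\gamma^{-1}\omega}(E),
  \qquad
  \vol_\omega(\gamma \cF) = v_{\gamma^{-1}\omega}.
\end{equation*}
Since $\{F_{\gamma^{-1}\omega}(E)\}_{\gamma \in \Gamma}$ and $\{v_{\gamma^{-1}\omega}\}_{\gamma \in \Gamma}$ are $\Gamma$-translates of bounded (in fact, $L^\infty$) random variables along an ergodic action of an amenable group, the tempered F{\o}lner sequence $\{I_l\}$ together with Lindenstrauss's pointwise ergodic theorem \cite{Lindenstrauss-01} yields, almost surely,
\begin{equation*}
  \frac{1}{|I_l|}\sum_{\gamma \in I_l} F_{\gamma^{-1}\omega}(E) \longrightarrow \EE[F_\bullet(E)],
  \qquad
  \frac{1}{|I_l|}\sum_{\gamma \in I_l} v_{\gamma^{-1}\omega} \longrightarrow \EE[v_\bullet].
\end{equation*}
(For the spectral quantity, one first restricts to a countable dense set of continuity points $E$ and then exploits monotonicity of $E \mapsto F_\omega(E)$; convergence in expectation follows by dominated convergence from the uniform trace bounds provided by bounded geometry.)

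Next I would compare the restricted trace $\Tr P_\omega^{I_l}({]}{-}\infty,E{[})$ with the sum of local traces of the unrestricted operator. By the inclusion \eqref{e-agglo-incl} and the uniform bounded geometry of $(\Lambda(I_l),g_\omega)$, the set $\Lambda(I_l)$ decomposes, up to a collar of bounded $g_0$-thickness around $\partial \Lambda_0(I_l)$, into $|I_l|$ translates $\gamma \cF$ with $\gamma \in I_l$. Decomposing the identity on $L^2(\Lambda(I_l),g_\omega)$ into these pieces and using Dirichlet-Neumann bracketing (or a trace-class resolvent comparison analogous to the one developed later in the paper, see Propositions \ref{p-Veff-RAP}--\ref{p-Veff-RAM} and the results of Sections \ref{s-Trace}--\ref{s-TraceRes}) I would write
\begin{equation*}
  \Tr P_\omega^{I_l}({]}{-}\infty,E{[})
  = \sum_{\gamma \in I_l} F_{\gamma^{-1}\omega}(E) + R_\omega(I_l,E),
\end{equation*}
where the remainder $R_\omega(I_l,E)$ is bounded by $C(E) \,|\partial I_l|$ for a constant depending only on $E$ and the bounded geometry constants, \emph{uniformly} in $\omega$. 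Here $\partial I_l$ denotes the symbolic boundary of $I_l$ with respect to a fixed finite generating set. Similarly, $\vol_\omega \Lambda(I_l) = \sum_{\gamma \in I_l} v_{\gamma^{-1}\omega} + O(|\partial I_l|)$ uniformly in $\omega$.

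Finally I would combine these two ingredients. Dividing the numerator and denominator above by $|I_l|$ and using the F{\o}lner property $|\partial I_l|/|I_l|\to 0$, the boundary remainders vanish in the limit, and the ergodic theorem delivers
\begin{equation*}
  N_\omega^{I_l}(E)
  = \frac{|I_l|^{-1}\sum_{\gamma \in I_l} F_{\gamma^{-1}\omega}(E) + o(1)}
         {|I_l|^{-1}\sum_{\gamma \in I_l} v_{\gamma^{-1}\omega} + o(1)}
  \longrightarrow \frac{\EE[F_\bullet(E)]}{\EE[v_\bullet]} = N(E)
\end{equation*}
almost surely at every continuity point $E$ of $N$; the corresponding statement for $\EE[N_\bullet^{I_l}(E)]$ then follows by bounded convergence. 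I expect the main obstacle to be the uniform $O(|\partial I_l|)$ control on $R_\omega(I_l,E)$: one needs trace-norm estimates on $\chi_{\gamma \cF}(H_\omega^{I_l} - z)^{-k}$ that are independent of $\omega$ and of the ambient index $l$, which is exactly where the uniform bounded geometry of $\{(\Lambda(I),g_\omega)\}$ established in Lemma \ref{lem:rel.bd.aggl} is indispensable.
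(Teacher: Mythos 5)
The paper does not actually prove Theorem~\ref{t-exhaustionIDS}: it is quoted verbatim from Theorem~4 of \cite{LenzPV-04}, and the only argument supplied here is Remark~\ref{r-absIDS}, which transfers the result from the unsmoothed agglomerates $\Lambda_0(I)$ to the smoothed ones $\Lambda(I)$ by domain monotonicity and the F\o lner property. Your skeleton (pointwise ergodic theorem for the local traces and volumes, plus an $O(|\partial I_l|)$ boundary correction) is the right general shape of the argument in \cite{LenzPV-04}, and the ergodic-theorem half is fine. But the step you yourself flag as the main obstacle is a genuine gap, not a technicality: the claimed estimate $|R_\omega(I_l,E)| \le C(E)\,|\partial I_l|$ compares the trace of the \emph{sharp spectral projection} of the Dirichlet operator $H_\omega^{I_l}$ with the localized traces of the projection of the whole-space operator. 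Neither of the tools you invoke delivers this. Dirichlet--Neumann bracketing controls the counting function of $H_\omega^{I_l}$ between sums of single-cell Neumann and Dirichlet counting functions, whose discrepancy is $O(|I_l|)$, not $O(|\partial I_l|)$; and the trace-class machinery of Sections~\ref{s-Trace}--\ref{s-TraceRes} (and Propositions~\ref{p-Veff-RAP}, \ref{p-Veff-RAM}) controls differences of \emph{smooth} functions $g(H)$ of operators differing by a single-site perturbation, not $\chi_{{]}-\infty,E{[}}(H)$ under a change of boundary condition. Indeed, if such a uniform boundary bound held at every $E$, the convergence $N_\omega^{I_l}(E)\to N(E)$ would follow at \emph{all} energies, contradicting the fact that the theorem must be restricted to continuity points of $N$ (the IDS genuinely jumps in the examples of Section~\ref{perOp}).

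The actual route in \cite{LenzPV-04} (following \cite{PeyerimhoffV-02}) avoids this: one first proves almost-sure convergence of the normalized heat-kernel traces $\frac{1}{\vol_\omega\Lambda_0(I_l)}\Tr\bigl(\mathrm e^{-tH_\omega^{\Lambda_0(I_l)}}\bigr)$, i.e.\ of the Laplace transforms of the measures $\dd N_\omega^{I_l}$, using the ergodic theorem together with the principle of not feeling the boundary, which does give off-diagonal decay and hence an honest $O(|\partial I_l|)$-type error for the semigroup. Vague/weak convergence of the eigenvalue counting measures then follows, and a Pastur--Shubin type argument upgrades this to convergence of the distribution functions exactly at the continuity points of $N$ --- which is where the continuity-point hypothesis enters. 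So your proposal should replace the direct spectral-projection comparison by a comparison of smooth functions of the Hamiltonians (heat semigroup or resolvent powers) followed by a weak-convergence step; as written, the decisive estimate is asserted rather than proved, and it cannot be proved in the form you state it. You also do not address the smoothed-versus-unsmoothed domain issue, which is the one piece of the argument the present paper does spell out, in Remark~\ref{r-absIDS}.
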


\begin{rem}
  \label{r-absIDS}
  Note that in~\cite{LenzPV-04} we proved the above theorem for the
  \emph{non}-smoothed domains $\Lambda_0(I)$, but the statement is
  still true for the smoothed domains $\Lambda(I)$. More precisely, if
  \begin{equation*}
    N_{0,\omega}^I(E) :=
      \frac 1{\vol_\omega \Lambda_0(I)}
      \Tr \chi_{{]}{-}\infty, E{[}} (H_\omega^{\Lambda_0(I)})
  \end{equation*}
  denotes the distribution function with respect to the domain
  $\Lambda_0(I)$, then domain monotonicity implies that
  \begin{equation*}
    N_{0,\omega}^{I_l}(E) \le
    \bigg(1 +
        \frac{\vol_\omega \bigl(\Lambda(I_l)\setminus \Lambda_0(I_l)\bigr)}
             {\vol_\omega\Lambda_0(I_l)}
    \bigg)N_\omega^{I_l}(E),
  \end{equation*}
  \sloppy where $H_\omega^{\Lambda_0(I_l)}$ denotes the Dirichlet
  operator on the (unsmoothed) agglomerate $\Lambda_0(I)$. The F\o
  lner property and~\eqref{quasiisom} now immediately imply that
  $\lim_{l \to \infty} N_{0,\omega}^{I_l}(E) \le \liminf_{l \to \infty}
  N_\omega^{I_l}(E)$.

  As for the converse inequality, we define a sequence $J_l \subset
  \Gamma$ as $J_l := I_l A$ with
  \begin{equation*}
    A := \{ \gamma \in \Gamma \, \mid \,
          d_0( \gamma \overline \cF, \overline \cF) \le r \}.
  \end{equation*}
  Note that $A$ is a finite set and that $J_l$ form also a tempered
  F\o lner sequence. Furthermore we have by construction that
  $\Lambda(I_l) \subset B_r(\Lambda_0(I_l)) \subset \Lambda_0(J_l)$,
  see~\cite[Proof of Lem.~2.4]{PeyerimhoffV-02}. As before, we derive
  $\limsup_{l \to \infty} N_\omega^{I_l}(E) \le \lim_{l \to \infty}
  N_{0,\omega}^{J_l}(E)$. The latter limit equals $N(E)$ since
  Theorem 4 in \cite{LenzPV-02} states that for \emph{any} tempered F\o lner
  sequence $(J_l)_l$ the finite volume approximations   $N_{0,\omega}^{J_l}$
  converge to $N$.
\end{rem}
Now we impose \emph{specific assumptions} to describe various
situations where we can say something about the continuity of the IDS.
We first study the case that
\begin{equation}
  \label{e-periodic}
  H \equiv H_\omega \text{ is a single $\Gamma$-periodic operator} .
\end{equation}
This fits in the general framework of ergodic operators if $\Omega$
contains a single element $\omega$.

The following is a basic result in the spectral analysis of
$\Gamma$-periodic operators.  Recall that a measure $\mu$ is called a
\emph{spectral measure} for the selfadjoint operator $H$ if, for a
Borel-measurable subset $B$ of $\RR$, the spectral projection
$\chi_B(H)=0$ if and only if $\mu(B)=0$.

{}From~\cite{LenzPV-02} or~\cite{LenzPV-04} we infer

\begin{prp}
  \label{p-spectralmeasure}
  Assume that $H$ is $\Gamma$-periodic.  Then the IDS of $H$ defined
  in~\eqref{e-absIDS} is the distribution function of a spectral
  measure for $H$. In particular, the IDS is continuous at $E$ if and
  only if $E\in\RR$ is not an eigenvalue of $H$.
\end{prp}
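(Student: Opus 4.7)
The plan is to exhibit $N$ as the distribution function of a concrete positive Borel measure on $\RR$ and then to verify that this measure has the defining property of a spectral measure for $H$. For any Borel set $B \subset \RR$, define
$$ \mu(B) := \frac{1}{\vol \cF}\, \Tr \bigl( \chi_\cF \, \chi_B(H)\, \chi_\cF \bigr), $$
where $\chi_\cF$ denotes multiplication by the characteristic function of $\cF$ and $\vol$ refers to $g_0$. Non-negativity is immediate, and $\sigma$-additivity follows from the strong-operator $\sigma$-additivity of the spectral family of $H$ combined with monotone convergence for traces of increasing sequences of non-negative operators. Local trace class of $\chi_\cF \chi_B(H) \chi_\cF$ for bounded $B$ — which is implicit in the very definition \eqref{e-absIDS} — can be obtained from standard heat-kernel bounds on $(X,g_0)$, since this operator is dominated by a constant multiple of $\chi_\cF e^{-tH} \chi_\cF$. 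By construction $\mu(]{-}\infty,E[) = N(E)$, so $N$ is the distribution function of $\mu$.

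The next step is to check that $\mu$ is a spectral measure for $H$. The implication $\chi_B(H) = 0 \Rightarrow \mu(B) = 0$ is trivial. For the converse, assume $\mu(B) = 0$. Since $\chi_B(H)$ is an orthogonal projection,
$$ \chi_\cF \chi_B(H) \chi_\cF = \bigl( \chi_B(H) \chi_\cF \bigr)^* \bigl( \chi_B(H) \chi_\cF \bigr) \ge 0 $$
is a non-negative operator of vanishing trace, hence the zero operator, which forces $\chi_B(H) \chi_\cF = 0$. At this point $\Gamma$-periodicity enters decisively: specializing \eqref{compcomp} to the single-point $\Omega$ shows that each $U_\gamma$ commutes with $H$ and therefore with $\chi_B(H)$, while a direct computation gives $U_\gamma \chi_\cF U_\gamma^{-1} = \chi_{\gamma \cF}$. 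Consequently
$$ \chi_B(H) \chi_{\gamma \cF} = U_\gamma\, \chi_B(H) \chi_\cF\, U_\gamma^{-1} = 0 \qquad \text{for every } \gamma \in \Gamma. $$
Since the $\Gamma$-translates of $\cF$ tile $X$ up to a null set, $\sum_{\gamma \in \Gamma} \chi_{\gamma \cF} = \Id$ in the strong operator topology, and summing over $\gamma$ yields $\chi_B(H) = 0$, as required.

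Specializing to $B = \{E\}$ finishes the proof: the jump of $N$ at $E$ equals $\mu(\{E\})$, which by the spectral-measure property vanishes iff $\chi_{\{E\}}(H) = 0$, iff $E$ is not an $L^2$-eigenvalue of $H$. The only really non-routine step is the passage from $\mu(B) = 0$ to $\chi_B(H) \chi_\cF = 0$; this rests on the positivity of $\chi_\cF \chi_B(H) \chi_\cF$ and the fact that a non-negative operator with zero trace must vanish. Once this is established the $\Gamma$-equivariance of $H$ and the tiling of $X$ by $\Gamma \cF$ do the rest of the work essentially automatically.
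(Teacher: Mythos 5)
Your proof is correct. The paper itself gives no argument here but defers to \cite{LenzPV-02} and \cite{LenzPV-04}, where the same fact is obtained in von Neumann algebra language: $B\mapsto\tau(\chi_B(H))$ is a spectral measure because the canonical trace $\tau$ on the algebra of $\Gamma$-invariant operators is faithful, and your key step --- passing from $\mu(B)=0$ to $\chi_B(H)=0$ via $\Vert\chi_B(H)\chi_\cF\Vert_{\mathrm{HS}}=0$, equivariance, and the tiling $\sum_{\gamma}\chi_{\gamma\cF}=\Id$ --- is exactly a self-contained proof of that faithfulness.
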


If one additionally assumes that the group $\Gamma$ is \emph{abelian},
much more is known.  
In fact, for abelian groups $\Gamma$, strong regularity properties of the
IDS are established in results of Sunada~\cite{Sunada-90} and
Gruber~\cite{Gruber-02}. Sunada proves that under a certain additional
assumption the spectrum has no point component.  Gruber shows that the
spectrum has no singularly continuous component. Putting this together
one obtains the following result.
\begin{thm} 
  \label{t-SunadaGruber} 
  Assume that $H$ is $\Gamma$-periodic and let $X$ be the maximal
  abelian covering of a closed Riemannian manifold $M$. If the
  potential $V$ is smooth and~$M$ admits a nontrivial $S^1$-action
  whose generating vector field is parallel, then $H$ has purely
  absolutely continuous spectrum and, consequently, the IDS is
  absolutely continuous.
\end{thm}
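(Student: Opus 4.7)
The plan is to assemble the theorem as a corollary of two external results, Sunada's theorem in \cite{Sunada-90} and Gruber's theorem in \cite{Gruber-02}, and then invoke Proposition~\ref{p-spectralmeasure} to translate spectral absolute continuity into absolute continuity of the IDS. The structural reason this works is that the spectrum of a selfadjoint operator decomposes into its pure point, singular continuous, and absolutely continuous parts; excluding the first two leaves only the third.

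First, I would set up Floquet/Bloch decomposition for the abelian covering $X \to M$. Since $\Gamma$ is a finitely generated abelian group acting cocompactly and freely by isometries, the direct integral decomposition of $H$ over the character group $\widehat \Gamma$ yields a family of fibre operators $H(\chi)$ on the compact quotient $M$, each with discrete spectrum given by band functions $E_n(\chi)$. A point $E$ is an eigenvalue of $H$ iff some band $E_n$ is constant on a subset of $\widehat\Gamma$ of positive Haar measure; similarly, absence of singular continuous spectrum amounts to a regularity/analyticity statement on the $E_n$.

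Second, I would apply Sunada's criterion from \cite{Sunada-90}: the existence of a nontrivial $S^1$-action on the closed manifold $M$ with parallel generating vector field (combined with smoothness of $V$) excludes flat bands, hence rules out point spectrum. Third, I would invoke Gruber's theorem from \cite{Gruber-02}, which, in the abelian covering setting with smooth coefficients, guarantees that the singular continuous component is absent. Together these give $\sigma_{\mathrm{pp}}(H)=\sigma_{\mathrm{sc}}(H)=\emptyset$, so the spectrum is purely absolutely continuous.

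Finally, by Proposition~\ref{p-spectralmeasure}, the IDS $N$ is the distribution function of a spectral measure for $H$; purely absolutely continuous spectrum forces every spectral measure to be absolutely continuous with respect to Lebesgue measure on $\RR$, hence $N$ is absolutely continuous. The main obstacle is not analytic but bookkeeping: one must check that our standing assumptions (bounded geometry of $(X,g_0)$, smooth $V$, the abelian $\Gamma$-action by isometries and the lifted $S^1$-symmetry) fit within the hypotheses under which Sunada and Gruber proved their respective statements, since those papers are phrased in slightly different frameworks.
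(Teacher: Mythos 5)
Your proposal is correct and follows exactly the route the paper takes: the theorem is obtained by combining Sunada's exclusion of point spectrum (via the $S^1$-action with parallel generating field) with Gruber's exclusion of singular continuous spectrum, and then using that the IDS is the distribution function of a spectral measure (Proposition~\ref{p-spectralmeasure}) to pass from purely absolutely continuous spectrum to absolute continuity of $N$. The paper presents this as an immediate consequence of the two cited results, so no further comparison is needed.
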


\begin{exm}
  In~\cite{Sunada-90} Sunada considers, as a particular example of a
  manifold $M$ which satisfies the assumptions of Theorem
  \ref{t-SunadaGruber}, a Riemannian product of a flat torus and a
  closed manifold.
\end{exm}

A more detailed discussion of $\Gamma$-periodic operators for abelian
$\Gamma$ can be found in Section~\ref{perOp}. After this discussion of
the periodic case, we will now deal with instances of random
operators.  The key result in our analysis of continuity properties of
the IDS will be the Wegner estimates discussed below.  The first type
of random operators is given in the following

\begin{dfn}
  \label{d-RAP} A family of operators $\{H_\omega\}_\omega$ as in
  \eqref{e-Homega} is called \emph{\textbf{r}andom Schr\"o\-din\-ger
    operator with \textbf{a}lloy type \textbf{p}otential } and
  abbreviated by RAP if it satisfies the following conditions:
  \begin{enumerate}
  \item[(P1)] Let $q_\gamma\colon \Omega\to {[}0, \infty{[}$, $\gamma
    \in \Gamma$ be a collection of i.i.d.~random variables, whose
    distribution measure $\mu$ has a compactly supported bounded
    density $f$ with $\supp f \subset [q_-,q_+] \subset [0,\infty{[}$.
  \item[(P2)] Let the function, $v\colon X \to \RR$, called \emph{single site potential}, satisfy
    \begin{equation}
    \label{e-lambda}
       v \ge \lambda \chi_\cF, \quad
       v \in L_{\mathrm c}^{p(d)}(X,g_0),
    \end{equation}
    where $\lambda$ is some positive real, 
    \begin{equation*}
      p(d) \ge  2 \quad  \text{if} \quad  d \le 3
            \qquad \text{and} \qquad
      p(d)> d/2   \quad  \text{if} \quad  d \ge  4.
    \end{equation*}
  \item[(P3)] Define the family of potentials by
    \begin{equation*}
      V_\omega(x) =
      V_{\per}(x)+\sum_{\gamma\in\Gamma} q_\gamma(\omega)
      v(\gamma^{-1}x)
   \end{equation*}
    with $V_{\per} \ge 0$ a bounded periodic potential.
  \item[(P4)] Let $\{g_\omega\}_\omega$ be a random metric, relatively
    bounded with respect to $g_0$, which is independent of the random
    variables $\{q_\gamma\}_\gamma$.
  \end{enumerate}
  Here, the random variable $q_\gamma$ is called \emph{coupling constant} and
  $\omega\in \Omega$ a \emph{random configuration}.
\end{dfn}
Note that random Schr\"odinger operators with alloy type potentials
satisfy the conditions~\eqref{compcomp} and~\eqref{weakmeas}.

\begin{rem}
  \label{r-pot}
  Due to the assumptions in Definition~\ref{d-RAP}, the potential
  $V_\omega$ is \emph{uniformly} infinitesimally
  $\Delta_\omega$-bounded in $\omega \in \Omega$, i.e., for every
  $\eps>0$ there exists a constant $C_\eps>0$ \emph{independent} of
  $\omega$ such that
  \begin{equation*}
    \| V_\omega u \| \le
    \eps \| \Delta_\omega u\| + C_\eps \| u\|
  \end{equation*}
  where $\|\cdot\|$ denotes the norm on $L^2(X,g_0)$ with respect to
  the periodic metric~$g_0$.

  All our results hold if we replace the condition $V_\per \ge 0$ by
  $V_\per\ge c$ for some negative number $c$. However, it is crucial
  that the single site potential $v$ does not change sign.
\end{rem}

The corresponding Wegner estimate proved in Section~\ref{s-RAPWE}
reads as follows.

\begin{thm}[Wegner estimate for RAP]
  \label{t-RAPWE}
  Let $\{H_\omega\}_\omega$ be as in Definition~\ref{d-RAP}.  Then,
  for all $p>1$, and $E \in \RR$, there exists $C_{E,p}>0$ such that
  for all $\epsilon \in {[}0, 1/2 {[}$, and $I \subset \Gamma$ finite,
  \begin{equation}
    \EE \bigl[ \Tr \bigl(
            P_\bullet^I([E-\epsilon, E+\epsilon])\bigr)\bigr]
       \le C_{E,p} \, \epsilon^{1/p} \, (\#I^+),
  \end{equation}
  where $I^+=I^+(v)$ abbreviates
  \begin{equation}
    \label{eq:def.i+}
    I^+(v) := \{ \gamma \in \Gamma \mid (\supp v\circ\gamma^{-1}) \cap
    \Lambda(I) \neq \emptyset \}.
  \end{equation} 
  The constant $C_{E,p}$ depends only on $E$, $p$, the manifold $(X,g_0)$, 
  the constants $C_{\rel,k}, k \in \NN$ of Definition~\ref{def:rel.bdd}, 
  the group $\Gamma$, the fundamental domain $\cF$, the single site potential
  $v$, the supremum of the density $\|f\|_\infty$ and its support $\supp f$ and
  $\|V_\per\|_\infty$.
\end{thm}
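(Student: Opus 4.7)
My strategy follows the spectral-averaging approach to Wegner estimates of Combes--Hislop and Kirsch--Stollmann, adapted to the Riemannian manifold setting with random metric. The key geometric input comes from condition~(P2): since $v\ge\lambda\chi_\cF$ implies $v_\gamma\ge\lambda\chi_{\gamma\cF}$ for every $\gamma\in\Gamma$, and since by definition~\eqref{eq:def.i+} the index set $I^+(v)$ contains every $\gamma$ with $\gamma\cF\cap\Lambda(I)\neq\emptyset$ (for then $\supp v_\gamma\supseteq\gamma\cF$ meets $\Lambda(I)$), the $\Gamma$-tiling property of $\cF$ yields $\bigcup_{\gamma\in I^+(v)}\gamma\cF\supseteq\Lambda(I)$. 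Consequently the effective potential
\[
W := \sum_{\gamma\in I^+(v)} v(\gamma^{-1}\cdot)
\]
satisfies $W\ge\lambda$ as a multiplication operator on $L^2(\Lambda(I),g_\omega)$.

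Next I exploit the monotonicity of $H^I_\omega$ in each coupling $q_\gamma$ (since $v_\gamma\ge 0$): the simultaneous shift $q_\gamma\mapsto q_\gamma+s$, $\gamma\in I^+$, sends $H^I_\omega$ to $H^I_\omega+sW\ge H^I_\omega+s\lambda$. Bounding $\chi_{[E-\eps,E+\eps]}(H^I_\omega)$ from above by a difference of smoothed monotone switches, conditioning on the random metric (which by~(P4) is independent of the couplings $\{q_\gamma\}_{\gamma\in\Gamma}$) and on $\{q_{\gamma'}\}_{\gamma'\notin I^+}$, and translating the densities $f(q_\gamma)$ for $\gamma\in I^+$ by the amount $s=O(\eps/\lambda)$ via a standard telescoping argument, reduces the estimate to controlling a sum of single-site quantities
\[
\|f\|_\infty\sum_{\gamma\in I^+}\EE\bigl[\Tr\bigl(v_\gamma^{1/2}\rho_\eps(H^I_\bullet)v_\gamma^{1/2}\bigr)\bigr],
\]
where $\rho_\eps$ is a nonnegative smooth approximation of $\chi_{[E-\eps,E+\eps]}$ of height $O(1)$ and spectral support of length $O(\eps)$ about $E$.

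The factor $\eps^{1/p}$ and the uniform constant $C_{E,p}$ then emerge from the super-trace-class estimate of Proposition~\ref{p-Veff-RAP}, which provides a Schatten-$\cJ^p$ bound on operators of the form $v_\gamma^{1/2}\phi(H^I_\omega)$ for suitable spectral cut-offs $\phi$, uniformly in $\omega$, $I$ and $\gamma$. In the manifold setting only this super-trace-class control (rather than the Euclidean-type trace-class control, $p=1$) is available uniformly in the random metric, and combining it through H\"older's inequality for Schatten classes with the smallness of the spectral window of $\rho_\eps$ produces the exponent $1/p$ in place of the Lipschitz exponent~$1$. Summation over $\gamma\in I^+$ then yields the claimed bound.

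The principal obstacle is to make every constant uniform in the random metric $g_\omega$ and in the finite set $I\subset\Gamma$, since the ambient Hilbert space $L^2(X,g_\omega)$ itself depends on $\omega$. This is handled by transporting everything to the reference space $L^2(X,g_0)$ via the density-unitaries $S_{\omega,0}$ of~\eqref{e-Somega}, by the uniform-in-$\omega$ bounded geometry of $(\Lambda(I),g_\omega)$ established in Lemma~\ref{lem:rel.bd.aggl}, and by the uniform Sobolev embeddings developed in the appendix---these ingredients together make Proposition~\ref{p-Veff-RAP} effective with constants depending only on the data listed in the statement of the theorem.
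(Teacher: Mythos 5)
Your first half is essentially the paper's opening move: the covering property $\sum_{\gamma\in I^+} v\circ\gamma^{-1}\ge\lambda$ on $\Lambda(I)$, the monotonicity of $H_\omega^I$ in the couplings, the independence of metric and couplings, and the reduction to an average over a single coupling constant are exactly the ingredients the paper uses (via the Hellmann--Feynman identity~\eqref{e-PosDer} and a smooth switch function $\rho$). Up to that point the proposal is correct.

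The gap is in the second half, where the factor $\eps^{1/p}$ has to be produced. First, Proposition~\ref{p-Veff-RAP} does not say what you attribute to it: it bounds $\Vert g(H_2)-g(H_1)\Vert_{\cJ_\alpha}$ with $g(x)=(x+1)^{-k}$, $\alpha=1-1/p<1$ (a \emph{super} trace class bound on the difference of the Hamiltonians at the two extremal values of one coupling), not a $\cJ_p$ bound on operators of the form $v_\gamma^{1/2}\phi(H_\omega^I)$. Second, and more seriously, no mechanism is given that converts your single-site quantity $\EE\bigl[\Tr(v_\gamma^{1/2}\rho_\eps(H_\bullet^I)v_\gamma^{1/2})\bigr]$ into something of order $\eps^{1/p}$: a Schatten bound that is uniform in $\eps$ yields no smallness in $\eps$ by itself, and ``H\"older's inequality for Schatten classes with the smallness of the spectral window'' is not an argument. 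The paper's actual mechanism is: integrate by parts in the single coupling to obtain the trace difference $\Tr(\rho(H_2+t)-\rho(H_1+t))$, rewrite it by Krein's trace identity as $\int\rho'\,\xi$, apply the invariance principle and the Combes--Hislop--Nakamura bound $\bigl(\int|\xi|^{1/\alpha}\bigr)^{\alpha}\le\Vert g(H_2)-g(H_1)\Vert_{\cJ_\alpha}^{\alpha}$, and then split $\int\rho'\,\xi$ by H\"older so that $\bigl(\int(\rho')^p\bigr)^{1/p}\le\eps^{-1+1/p}$ combines with the $t$-integration of length $4\eps$ to give $\eps^{1/p}$. None of this appears in your sketch. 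Moreover, the two devices you invoke are incompatible as combined: the Kirsch-type translation of the densities by $s=O(\eps/\lambda)$ leads (after telescoping) to a bound by $\eps$ times the expected \emph{total} eigenvalue count in a fixed interval --- a uniform Weyl-type trace bound which is precisely what is not available here and what the $\cJ_\alpha$-machinery with $\alpha<1$ is designed to avoid --- whereas the quantity $v_\gamma^{1/2}P v_\gamma^{1/2}$ belongs to the Combes--Hislop spectral-averaging route, which does not use translated densities. As written, the step from your reduction to the claimed bound $C_{E,p}\,\eps^{1/p}\,(\#I^+)$ is not proved.
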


The proof of Theorem \ref{t-RAPWE} shows that the same result holds
true if we replace the condition~\eqref{e-lambda} by the following
weaker assumption
\begin{equation}
  \label{e-lambda-weak}
  v \ge 0, \quad v \in L_{\mathrm c}^{p(d)}(X,g_0)
       \qquad \text{and}\qquad
  \sum_{\gamma\in \Gamma} v \circ \gamma^{-1} \ge \lambda
   \quad \text{on $X$.}
\end{equation}

Theorems~\ref{t-exhaustionIDS} and~\ref{t-RAPWE} immediately imply:
\begin{cor}[H\"older continuity of the IDS for RAP]
  Under the assumptions of Theorem~\ref{t-RAPWE} and the amenability
  of $\Gamma$ the integrated density of states is locally H\"older
  continuous on $\RR$, for any H\"older exponent $1/p$ strictly
  smaller than $1$.
\end{cor}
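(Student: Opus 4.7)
The plan is to deduce local H\"older continuity of $N$ by combining the two main results already available: the Wegner estimate of Theorem~\ref{t-RAPWE} gives quantitative control on the expected number of finite-volume eigenvalues in a short energy window, while Theorem~\ref{t-exhaustionIDS} lets us pass from these finite-volume bounds to a bound on $N$ itself. So the starting point is the identity
\begin{equation*}
   \EE\bigl[N_\bullet^I(E+\epsilon)-N_\bullet^I(E-\epsilon)\bigr]
   \le \frac{1}{\inf_\omega \vol_\omega\Lambda(I)}\,
          \EE\bigl[\Tr P_\bullet^I([E-\epsilon,E+\epsilon])\bigr],
\end{equation*}
into which one plugs the right-hand side of Theorem~\ref{t-RAPWE}.

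The next step is to absorb the factor $\#I^+(v)/\vol_\omega\Lambda(I)$ into an $I$- and $\omega$-independent constant. Since $v$ has compact support, there is a finite set $F\subset\Gamma$ with $\supp v\subset\Lambda_0(F)$, so $I^+(v)\subset IF$ and in particular $\#I^+(v)\le \#F\cdot \#I$. From $\Lambda_0(I)\subset\Lambda(I)$ together with the lower bound in~\eqref{quasiisom} one gets $\vol_\omega\Lambda(I)\ge c_0\,\#I\,\vol_0\cF$ uniformly in $\omega$ and $I$. Fixing a bounded energy window $[a,b]$ (on which $C_{E,p}$ stays bounded, as one should verify while reading the proof of Theorem~\ref{t-RAPWE}) one thus gets a single constant $C_p=C_p(a,b)$ such that, for every finite $I\subset\Gamma$, every $E\in[a,b]$ and every $\epsilon\in{[}0,1/2{[}$,
\begin{equation*}
   \EE\bigl[N_\bullet^I(E+\epsilon)-N_\bullet^I(E-\epsilon)\bigr]\le C_p\,\epsilon^{1/p}.
\end{equation*}

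Now amenability enters: one fixes a tempered F\o lner sequence $(I_l)$ and applies Theorem~\ref{t-exhaustionIDS}. At any pair of continuity points $E_1<E_2$ of $N$ in $(a,b)$, setting $E=(E_1+E_2)/2$ and $\epsilon=(E_2-E_1)/2$ and passing to the limit $l\to\infty$ yields
\begin{equation*}
   N(E_2)-N(E_1)\le C_p\,\bigl((E_2-E_1)/2\bigr)^{1/p}.
\end{equation*}
The very same inequality precludes jumps of $N$, so \emph{every} $E\in(a,b)$ is a continuity point; one then extends to arbitrary $E_1<E_2\in[a,b]$ by approximating them from within through continuity points and using monotonicity of $N$. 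Since $[a,b]$ was arbitrary, this proves local H\"older continuity of $N$ with any exponent strictly less than one.

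I do not expect a serious obstacle: the only mildly delicate point is the first bullet of step~2, namely verifying that the constant $C_{E,p}$ in Theorem~\ref{t-RAPWE} is locally bounded in $E$; this is however standard, as the $E$-dependence in Wegner-type estimates typically enters via quantities like $\exp(\mathrm{const}\cdot E)$ coming from heat-kernel or resolvent bounds, which are uniform on compact energy intervals. The rest is bookkeeping of constants governed by Assumption~\ref{main.ass} and the geometry of the F\o lner sets.
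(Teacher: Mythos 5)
Your proposal is correct and follows exactly the route the paper intends: the corollary is stated there as an immediate consequence of Theorems~\ref{t-exhaustionIDS} and~\ref{t-RAPWE}, and your argument simply spells out that implication (volume normalization, $\#I^+\lesssim\#I$, passage to the limit along the F\o lner sequence, and the self-improving exclusion of jump points). The one point you flag as needing verification -- local boundedness of $C_{E,p}$ in $E$ -- is settled by the explicit constant in~\eqref{e-ExplicitConst}, which grows only polynomially, like $(E+2)^{\alpha(k+1)}$.
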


Now, we consider Laplacians with random metrics as defined in
\begin{dfn}
  \label{d-RAM}
  A family of operators $\{H_\omega\}_\omega$ as in \eqref{e-Homega}
  is called \emph{\textbf{r}andom Laplace operator with \textbf{a}lloy
    type \textbf{m}etric } and abbreviated by (RAM) if it satisfies
  the following conditions:
  \begin{enumerate}
  \item[(M1)] Let $r_\gamma\colon \Omega\to \RR$, $\gamma \in \Gamma$ be
    a collection of i.i.d.~random variables, whose distribution
    measure $\nu$ has a compactly supported density $h$ of bounded
    variation.
  \item[(M2)] Let $u\in C_{\mathrm c}^\infty(X)$ with $u \geq \kappa
    \chi_\cF$ and $\kappa >0$ be given.
  \item[(M3)] Define the family of conformally perturbed Riemannian
    metrics on $X$ for $\omega\in \Omega$:
    \begin{equation}
    \label{atm}
      g_\omega(x):=a_\omega(x) g_0(x):=
        \Big(\sum_{\gamma\in \Gamma}\, \mathrm e^{r_\gamma(\omega)} \,
           u(\gamma^{-1} x) \Big) g_0(x).
    \end{equation}
  \item[(M4)] Let $V_\omega$ be identically zero, i.e.~$H_\omega=
    -\Delta_\omega$ for all $\omega \in \Omega$.
  \end{enumerate}
  In this situation, the random variable $r_\gamma$ is called \emph{coupling
    constant}, $\omega\in \Omega$ a \emph{random configuration} and
  $u$ the \emph{single site deformation}.
\end{dfn}

Note that the family $\{g_\omega\}_\omega$ is relatively bounded with
respect to $g_0$ and that the constants $C_{\rel,k}, k \in \NN$ depend only on
$u$, its derivatives and $\supp h$.

To state our Wegner estimate for alloy type metrics, whose proof is
given in Section~\ref{s-RAMWE}, we need one more piece of notation.
Namely, for $a\ge 1$, set $J_a =[1/a,a]$.

\begin{thm}[Wegner estimate for RAM]
  \label{t-RAMWE}
  Let $\{H_\omega\}_\omega$ with alloy-type metric be given.  Then,
  for every $ p>1$, $a \ge 1$ there exists $C_{a,p} >0$ such that for
  every finite $I\subset \Gamma$
  \begin{equation}
    \EE \bigl[ \Tr \bigl (P_\bullet^I ([E-\epsilon, E+\epsilon])
             \bigr)\bigr]
    \le C_{a,p} \, \epsilon^{1/p} \, (\# I^+)
  \end{equation}
  whenever $\epsilon < 1/2$ and $[E-\epsilon, E+\epsilon]\subset J_a$.
  Here $I^+=I^+(u)$ as in~\eqref{eq:def.i+}. 
  The constant $C_{a,p}$
  depends on $a$, $p$, the manifold $(X,g_0)$, the group $\Gamma$, the
  fundamental domain $\cF$, the single site deformation $u$ and the
  density $h$.
\end{thm}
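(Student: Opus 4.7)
The plan is to transport the variable-metric operators $H_\omega^I$ onto a common Hilbert space via the unitaries $S_{\omega,0}$ of~\eqref{e-Somega}, identify the effective perturbation produced by differentiating with respect to each coupling constant $r_\gamma$, and then combine the trace/Schatten-class bound of Proposition~\ref{p-Veff-RAM} with a spectral averaging argument for BV densities in the style of Combes--Hislop--Klopp.

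Setting $\tilde H_\omega^I := S_{\omega,0} H_\omega^I S_{\omega,0}^{-1}$ on $L^2(\Lambda(I), g_0)$, unitary invariance gives
\begin{equation*}
  \Tr P_\omega^I([E-\epsilon, E+\epsilon]) = \Tr \chi_{[E-\epsilon,E+\epsilon]}(\tilde H_\omega^I).
\end{equation*}
Since $u\circ\gamma^{-1}$ vanishes on $\Lambda(I)$ for $\gamma \notin I^+$, only the coupling constants $\{r_\gamma\}_{\gamma \in I^+}$ enter into $\tilde H_\omega^I$. Freezing all other randomness and writing $t \mapsto \tilde H_\omega^I(t)$ for the family obtained by replacing $r_{\gamma_0}$ by $r_{\gamma_0}+t$ (for fixed $\gamma_0 \in I^+$), a direct computation on quadratic forms produces the form-derivative $W_{\omega,\gamma_0}(t)$, a second-order differential operator supported on $\supp(u\circ\gamma_0^{-1})$, of one definite sign determined by the monotone growth of $a_\omega$ in $r_{\gamma_0}$.

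The central input from Proposition~\ref{p-Veff-RAM} is that, for suitable $s = s(d)$, the sandwiched effective perturbation $(\tilde H_\omega^I + 1)^{-s}\, W_{\omega,\gamma_0}(t)\, (\tilde H_\omega^I + 1)^{-s}$ belongs to a Schatten class $\mathfrak{S}_p$ with norm bounded uniformly in $\omega$, $I$, $\gamma_0$, and $t$ in any compact range. Given this, the Wegner estimate follows along standard lines: for each $\gamma_0 \in I^+$, one averages $\Tr\chi_{[E-\epsilon,E+\epsilon]}(\tilde H_\omega^I(t))$ against $h(r_{\gamma_0}+t)\,dt$, uses integration by parts to exploit the bounded variation of $h$, and invokes the Birman--Solomyak / Krein spectral-shift inequality on intervals of length $2\epsilon$ to extract a factor of $\epsilon^{1/p}$. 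Summing over $\gamma_0 \in I^+$ and then taking the full expectation gives the claimed bound $C_{a,p}\epsilon^{1/p}(\#I^+)$; the restriction $[E-\epsilon,E+\epsilon] \subset J_a$ is what allows the resolvent powers in the sandwiching to be bounded uniformly in $E$.

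The main obstacle is precisely the Schatten bound of Proposition~\ref{p-Veff-RAM}. In contrast with the RAP case, where the effective perturbation is a nonnegative single-site potential, here $W_{\omega,\gamma_0}$ is a second-order differential operator coming from differentiating the conformal Laplacian, so its Schatten-class properties rest on uniform Sobolev embedding and bounded-geometry estimates for the family $\{(\Lambda(I), g_\omega)\}$, developed in Sections~\ref{s-Trace}--\ref{s-TraceRes} and the appendix on the basis of Assumption~\ref{main.ass}.
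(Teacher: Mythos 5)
Your overall architecture (unitary transport to a common Hilbert space, an effective perturbation, Schatten-class bounds, spectral averaging with a BV density) matches the paper's, but there is a genuine gap at the heart of the argument: the claim that the single-site derivative $W_{\omega,\gamma_0}$ is ``of one definite sign determined by the monotone growth of $a_\omega$ in $r_{\gamma_0}$'' is unjustified and, as stated, false in the form you need it. Under the conformal change $g_\omega=a_\omega g_0$ the Rayleigh quotient is $\int a_\omega^{d/2-1}|d\phi|_{g_0}^2\dd\vol_0 \big/ \int a_\omega^{d/2}|\phi|^2\dd\vol_0$; increasing $r_{\gamma_0}$ increases \emph{both} numerator and denominator, so $\partial E_n/\partial r_{\gamma_0}$ is a difference of nonnegative terms with no definite sign, and a fortiori no uniform lower bound on its modulus. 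Without such a lower bound the Wegner argument (dividing by the eigenvalue velocity, as in \eqref{e-PosDer} for RAP) does not close. The paper's key idea, which your proposal is missing, is Lemma~\ref{l-Ableitung}: shifting \emph{all} coupling constants $r_\gamma$, $\gamma\in I^+$, simultaneously by $t$ is an exact global conformal rescaling $g\mapsto \mathrm e^t g$ on $\Lambda(I)$, whence $E_n(\omega+t(1,\dots,1))=\mathrm e^{-t}E_n(\omega)$ and therefore $\sum_{\gamma\in I^+}\partial E_n/\partial r_\gamma=-E_n(\omega)$. Only this summed identity gives the quantitative lower bound $E_n\ge 1/a$ on the eigenvalue motion, and it is exactly why the estimate holds only on $J_a=[1/a,a]$: eigenvalues near zero are barely moved. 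Your explanation that the restriction to $J_a$ serves ``to bound the resolvent powers uniformly in $E$'' misattributes this; $(\wt H+1)^{-s}$ is bounded for all $E$ since $\wt H\ge 0$, so under your reading the restriction would be unnecessary, which contradicts the paper's explicit low-energy counterexample discussion.

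Two smaller remarks. First, Proposition~\ref{p-Veff-RAM} does not provide a sandwiched bound $(\wt H+1)^{-s}W(\wt H+1)^{-s}\in\cJ_p$ for a derivative operator $W$; it bounds $\|g(H_2)-g(S H_1 S^*)\|_{\cJ_\alpha}$ for two configurations $\omega^1,\omega^2$ differing in a single coordinate, with $g(x)=(x+1)^{-k}$. The paper accordingly never differentiates the operator itself: after using Lemma~\ref{l-Ableitung} to control $\rho'$, it normalizes $\partial_{r_\gamma}\rho(E_n+t)$ by subtracting the term at $\omega^1=\theta_\gamma^0(\omega)$, integrates by parts against the BV density to produce $\|h\|_{\mathrm{BV}}\max_s|\Tr\rho(H_{\theta_\gamma^s(\omega)}^I+t)-\Tr\rho(H_1+t)|$, and only then applies the spectral shift function together with Proposition~\ref{p-Veff-RAM}. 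Second, the transport between metrics is done only between the two single-site configurations via $S_{\omega^1,\omega^2}$, not globally to $g_0$; this is what keeps the effective perturbation localized near $\gamma\cF$ and makes the trace-class estimates of Sections~\ref{s-Trace}--\ref{s-TraceRes} applicable. You should rebuild the eigenvalue-motion step around Lemma~\ref{l-Ableitung} before the rest of your outline can be carried through.
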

Note that this Wegner estimate in contrast to Theorem \ref{t-RAPWE} 
does not apply to a neighbourhood of the energy zero. An intuitive explanation
for this phenomenon is given in Example \ref{x-discontinous}.
Similarly as before, Theorems~\ref{t-exhaustionIDS} and~\ref{t-RAMWE}
imply:
\begin{cor}[H\"older continuity of the IDS for RAM]
  \label{DOSthm}
  Under the assumptions of Theorem~\ref{t-RAMWE} and the amenability
  of $\Gamma$ the integrated density of states is locally H\"older
  continuous on $\RR \setminus \{0\}$, for any H\"older exponent
  strictly smaller than $1$.
\end{cor}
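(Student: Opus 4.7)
The strategy is the standard one: apply the Wegner estimate along a tempered F\o lner sequence, divide by the volume, and pass to the limit using Theorem~\ref{t-exhaustionIDS}. Fix a compact interval $K \subset \RR \setminus \{0\}$; since $H_\omega = -\Delta_\omega \ge 0$, the IDS is zero on $(-\infty,0]$, so without loss of generality $K \subset (0,\infty)$. Choose $a \ge 1$ large enough that $K \subset J_a = [1/a,a]$. Let $\{I_l\}_l$ be a tempered F\o lner sequence in $\Gamma$, which exists by amenability.

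Given $E$ in the interior of $K$ and $\epsilon \in (0,1/2)$ small enough that $[E-\epsilon,E+\epsilon] \subset J_a$, I would apply Theorem~\ref{t-RAMWE} to obtain
\begin{equation*}
  \EE\bigl[\Tr P_\bullet^{I_l}\bigl([E-\epsilon,E+\epsilon]\bigr)\bigr]
    \le C_{a,p}\, \epsilon^{1/p}\, \#I_l^+.
\end{equation*}
Now I would convert this into a bound on $\EE[N_\bullet^{I_l}(E+\epsilon) - N_\bullet^{I_l}(E-\epsilon)]$. The subtlety is that $N_\omega^{I_l}$ carries the $\omega$-dependent factor $\vol_\omega \Lambda(I_l)$ in the denominator; however, the relative boundedness~\eqref{quasiisom} gives $\vol_\omega \Lambda(I_l) \ge C_{\rel,0}^{-d/2}\, \vol_0 \Lambda(I_l) \ge c\, \#I_l$ uniformly in $\omega$ and $l$, where $c>0$ depends only on $C_{\rel,0}$, $d$, $\vol_0 \cF$, and the inclusion~\eqref{e-agglo-incl}. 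Dividing the Wegner bound by $c\,\#I_l$ yields
\begin{equation*}
  \EE\bigl[N_\bullet^{I_l}(E+\epsilon) - N_\bullet^{I_l}(E-\epsilon)\bigr]
    \le \frac{C_{a,p}}{c}\, \epsilon^{1/p}\, \frac{\#I_l^+}{\#I_l}.
\end{equation*}

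The next step is to take $l \to \infty$. Since $\supp u$ is compact, there is a finite set $A \subset \Gamma$ with $\supp u \subset \Lambda_0(A)$, and a short geometric argument (using~\eqref{e-agglo-incl}) shows that $I_l^+ \setminus I_l$ is contained in a fixed finite thickening of the boundary of $I_l$. The defining F\o lner property then gives $\#I_l^+/\#I_l \to 1$. Assuming first that both $E-\epsilon$ and $E+\epsilon$ are continuity points of $N$, Theorem~\ref{t-exhaustionIDS} gives $\EE[N_\bullet^{I_l}(E\pm\epsilon)] \to N(E\pm\epsilon)$, so passing to the limit yields
\begin{equation*}
  N(E+\epsilon) - N(E-\epsilon) \le \frac{C_{a,p}}{c}\, \epsilon^{1/p}.
\end{equation*}
The restriction to continuity points is removed using monotonicity of $N$: for any $E_1 < E_2$ in $K$, pick continuity points $E_1' < E_1 < E_2 < E_2'$ of $N$ (continuity points of a monotone function are dense), apply the inequality above, and let $E_1' \uparrow E_1$, $E_2' \downarrow E_2$. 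This gives the local H\"older estimate with exponent $1/p$, and since $p > 1$ is arbitrary, we obtain every exponent strictly less than $1$.

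The statement is really just a packaging of Theorems~\ref{t-exhaustionIDS} and~\ref{t-RAMWE}, so there is no genuine obstacle; the only point requiring some care is the uniform lower bound $\vol_\omega \Lambda(I_l) \ge c\,\#I_l$ (which is why one needs~\eqref{quasiisom}, i.e.\ the two-sided bound on the random metric, and the bounded geometry of $\Lambda(I_l)$ uniformly in $\omega$ and $l$ supplied by Lemma~\ref{lem:rel.bd.aggl}), together with the verification that $\#I_l^+/\#I_l \to 1$, which uses compactness of $\supp u$ and the F\o lner property.
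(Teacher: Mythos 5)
Your proof is correct and is exactly the argument the paper intends: the corollary is stated as an immediate consequence of Theorems~\ref{t-exhaustionIDS} and~\ref{t-RAMWE}, and you have simply spelled out the standard deduction (normalize the Wegner bound by the volume, use the uniform lower bound on $\vol_\omega\Lambda(I_l)$ from~\eqref{quasiisom}, pass to the limit along the F\o lner sequence, and handle non-continuity points by monotonicity). The two points you flag as requiring care are indeed the only ones, and you treat them correctly.
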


In Example~\ref{x-discontinous} below, we mention a class of 
abelian, non-compact covering manifolds $(X,g_0)$, where the
corresponding Laplace operator has an $L^2$-eigenfunction with positive eigenvalue.
(Note that this phenomenon does not occur for periodic Schr\"odinger operators
on the Euclidean space.)  Consequently, the IDS of the periodic Laplace
operator has a discontinuity away from $0$, whereas the IDS of the
random family RAM is continuous away from $0$. In this example, the
introducion of randomness improves the regularity of the IDS.


\section{Periodic operators on manifolds}
\label{perOp}

In this section, we consider a covering manifold $X$ with
\emph{abelian} covering group $\Gamma$ and $\Gamma$-periodic metric
$g$. In this case, all irreducible, unitary representations are
one-dimensional. Therefore, the set of their equivalence classes forms
a group, called the \emph{dual group} of $\Gamma$ and denoted by
$\hat\Gamma$.  In particular, if $\Gamma=\ZZ^r$, we have $\hat
\Gamma=\TT^r$, the $r$-dimensional torus together with its Haar
measure denoted by $\dd \theta$.

A periodic Schr\"odinger operator $H$ admits a direct integral
decomposition
\begin{equation*}
  UHU^* = \int\nolimits_{\hat \Gamma}^\oplus H^\theta \dd \theta
\end{equation*}
where $U$ is a unitary map from $L^2(X) \cong \ell^2(\Gamma) \otimes
L^2(\cF)$ onto $\int_{\hat \Gamma}^\oplus L^2(\cF) \dd \theta \cong
L^2(\hat \Gamma) \otimes L^2(\cF)$ acting as a \emph{partial Fourier
  transformation} on the group part.  The operators $H^\theta$,
$\theta \in \hat\Gamma$ are defined on the set of $\theta$-periodic
functions, i.e., functions $\psi$ such that $\psi(\gamma
x)=\theta(\gamma)\psi(x)$ for all $x \in X$ and $\gamma \in \Gamma$.
It suffices to consider such functions on a fundamental domain $\cF$.
Since $H^\theta$ can be considered as an elliptic operator on a
complex line bundle of a compact manifold (cf.~the notion of a
``twisted'' Laplacian in \cite{Sunada-88}), it has purely discrete
spectrum for all $\theta \in \hat \Gamma$.  We denote by $E_1(\theta)
\le E_2(\theta) \le \dots$ the eigenvalues of $H^\theta$ in
non-decreasing order and including multiplicities.

Let us start with the following proposition giving a formula to
calculate the IDS:
\begin{prp} 
  \label{p-ids-nc}
  Let $\Gamma$ be an abelian group and $H$ be
  $\Gamma$-periodic and
  \begin{equation*}
    N(E) := \frac 1 {\vol \cF}
       \Tr \bigl( \chi_{\cF} \cdot
            \chi_{{]}-\infty,E{[}} (H) \cdot \chi_{\cF} \bigr),
  \end{equation*}
  where $\Tr$ is the trace in $L^2(\cF)$.  Then
  \begin{equation*}
    N(E) = \frac 1 {\vol \cF}
      \int\limits_{\hat \Gamma}
         \Tr \chi_{{]}{-}\infty,E{[}}(H^\theta) \dd \theta
        = \frac 1 {\vol \cF }
             \sum_n \meas \{ \theta \in \hat \Gamma \mid
                              E_n(\theta) < E \}.
  \end{equation*}
\end{prp}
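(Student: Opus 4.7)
The plan is to prove the two equalities in reverse order, treating the second (sum over $n$) first because it is the more elementary, and then establishing the first (integral of traces) using the direct integral machinery recalled just before the proposition.

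First I would establish the identity
\begin{equation*}
  \int_{\hat\Gamma} \Tr \chi_{]-\infty,E[}(H^\theta) \dd\theta
  = \sum_n \meas\{\theta \in \hat\Gamma \mid E_n(\theta) < E\}.
\end{equation*}
Since each $H^\theta$ has purely discrete spectrum with eigenvalues $E_1(\theta)\le E_2(\theta)\le\dots$ (counted with multiplicity), the pointwise identity $\Tr\chi_{]-\infty,E[}(H^\theta) = \sum_n \chi_{]-\infty,E[}(E_n(\theta))$ is immediate. The measurability of each $\theta \mapsto E_n(\theta)$ (a standard consequence of analytic perturbation theory, or direct from the measurability of $\theta \mapsto H^\theta$) ensures the sets $\{E_n(\theta)<E\}$ are measurable. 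Interchanging sum and integral by monotone convergence yields the second equality.

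The first equality requires more care. The idea is to use the direct integral decomposition $U H U^* = \int^{\oplus}_{\hat\Gamma} H^\theta \dd\theta$, which by functional calculus gives $U \chi_{]-\infty,E[}(H) U^* = \int^{\oplus} \chi_{]-\infty,E[}(H^\theta) \dd\theta$. I would fix an orthonormal basis $\{e_k\}_{k\in\NN}$ of $L^2(\cF,g)$ and view each $e_k$ as an element of $L^2(X)$ by extension by zero outside $\cF$; under the isomorphism $L^2(X)\cong \ell^2(\Gamma)\otimes L^2(\cF)$ this corresponds to $\delta_{e_\Gamma}\otimes e_k$, where $e_\Gamma$ is the identity of $\Gamma$. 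Since the partial Fourier transform sends $\delta_{e_\Gamma}$ to the constant function $1$ on $\hat\Gamma$, we get $(Ue_k)(\theta) = e_k$ for almost every $\theta$. Applying the defining property of a direct integral yields
\begin{equation*}
  \langle e_k, \chi_{]-\infty,E[}(H) e_k \rangle_{L^2(X)}
  = \int_{\hat\Gamma} \langle e_k, \chi_{]-\infty,E[}(H^\theta) e_k \rangle_{L^2(\cF)} \dd\theta.
\end{equation*}
Summing over $k$ and using Tonelli (both sides are nonnegative) together with the definition of the trace gives
\begin{equation*}
  \Tr\bigl(\chi_\cF \cdot \chi_{]-\infty,E[}(H) \cdot \chi_\cF\bigr)
  = \int_{\hat\Gamma} \Tr \chi_{]-\infty,E[}(H^\theta) \dd\theta,
\end{equation*}
and dividing by $\vol\cF$ produces the first equality.

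The main obstacle, and the step where I would spend the most care, is the identification $(Ue_k)(\theta)=e_k$: one must fix the normalization convention for the partial Fourier transform (so that it is unitary from $\ell^2(\Gamma)\otimes L^2(\cF)$ to $L^2(\hat\Gamma)\otimes L^2(\cF)$ with Haar measure $\dd\theta$) and verify that this normalization matches the one implicit in the direct integral formula, so that the Haar measure on $\hat\Gamma$ and the volume $\vol\cF$ appear correctly. Everything else is bookkeeping: interchanging a nonnegative sum with a nonnegative integral, and the fact that the direct integral decomposition extends from the operator to all Borel functional calculus expressions.
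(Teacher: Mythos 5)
Your proposal is correct and follows essentially the same route as the paper: extend an orthonormal basis of $L^2(\cF)$ by zero, identify it with $\delta_e\otimes\phi_n$, observe that the partial Fourier transform sends this to $\1\otimes\phi_n$, and then use the direct integral decomposition of $\chi_{]-\infty,E[}(H)$ together with Fubini/Tonelli. The only cosmetic difference is in the second equality, where the paper expands the fiberwise trace in an eigenbasis $\{\phi_n^\theta\}$ of $H^\theta$ while you use $\Tr\chi_{]-\infty,E[}(H^\theta)=\sum_n\chi_{]-\infty,E[}(E_n(\theta))$ and monotone convergence directly; these are the same computation.
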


\begin{proof}
  Let $\{\phi_n\}$ be an orthonormal basis of $L^2(\cF)$ and $\wt
  \phi_n$ the trivial extension of $\phi_n$ in $L^2(X)$. Then $\wt
  \phi_n = \delta_e \otimes \phi_n$ via the identification $L^2(X)
  \cong \ell^2(\Gamma) \otimes L^2(\cF)$.  We have
  \begin{multline*}
    (\vol \cF) \, N(E) =
    \Tr \bigl(\chi_\cF \cdot \chi_{{]}-\infty,E{[}} (H)
                        \cdot \chi_{\cF} \bigr)\\
    = \sum_n \langle \wt \phi_n,
          \chi_{{]}-\infty,E{[}}(H) \wt \phi_n
                         \rangle_{L^2(X)}\\
    =  \sum_n \langle U \wt \phi_n,
                      U \chi_{ {]}-\infty,E{[}}(H) U^* U \wt \phi_n
           \rangle_{L^2(\hat \Gamma) \otimes L^2(\cF)}.
   \end{multline*}
   Here, $U \wt \phi_n = U(\delta_e \otimes \phi_n) = \1 \otimes
   \phi_n$ where $\1$ is the constant function on $\hat \Gamma$ and
   \begin{equation*}
     U \chi_{{]}-\infty,E{[}}(H) U^* =
     \int_{\hat \Gamma}^\oplus \chi_{]-\infty,E{[}}(H^\theta) \dd \theta.
   \end{equation*}
   Therefore, $(\vol \cF) N(E)$ equals
   \begin{multline*}
     \sum_n \int_{\hat \Gamma} \langle \phi_n,
     \chi_{]-\infty,E{[}}(H^\theta) \phi_n \rangle_{L_2(\cF)} \dd
     \theta =
     \int_{\hat \Gamma} \Tr \chi_{]-\infty,E{[}}(H^\theta)
     \dd \theta \\=
     \sum_n \int_{\hat \Gamma} \langle \phi_n^\theta,
       \chi_{]-\infty,E{[}}(H^\theta) \phi_n^\theta \rangle_{L_2(\cF)}
             \dd \theta,
   \end{multline*}
   by Fubini, where $\{\phi_n^\theta\}_n$ is an orthogonal basis of
   eigenfunctions of $H^\theta$ associated to $E_n(\theta)$ in each
   fiber. But the latter integral equals the measure of $\{ \theta \in
   \hat \Gamma \mid E_n(\theta)<E\}$ and the result follows.
\end{proof}

\begin{prp}
  \label{p-IDS}
  Let $\Gamma$ be an abelian group and $H$ be
  $\Gamma$-periodic.  Then the following assertions are equivalent:
  \begin{enumerate}[\rm(a)]
  \item $E\in\RR$ is an eigenvalue of $H$.
  \item $N(\cdot)$ is not continuous at $E$.
  \item There is an $n \in \NN$ such that $\meas\{ \theta \in \hat
    \Gamma \mid E_n (\theta) =E \} >0$.
  \end{enumerate}
\end{prp}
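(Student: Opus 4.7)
The strategy is to derive the three-way equivalence by chaining together the two preceding propositions: Proposition~\ref{p-spectralmeasure} supplies (a) $\Leftrightarrow$ (b) directly, and Proposition~\ref{p-ids-nc} supplies (b) $\Leftrightarrow$ (c) once the jump of $N$ at $E$ has been identified with the sum of atoms of the band measures.

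For (a) $\Leftrightarrow$ (b) I would simply invoke Proposition~\ref{p-spectralmeasure}: the IDS $N$ of a $\Gamma$-periodic operator $H$ is the distribution function of a spectral measure $\mu$ for $H$. Since $N$ is non-decreasing and left-continuous, it fails to be continuous at $E$ iff $\mu(\{E\}) > 0$. By the defining property of a spectral measure this is equivalent to $\chi_{\{E\}}(H) \neq 0$, i.e., to $E$ being an eigenvalue of $H$.

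For (b) $\Leftrightarrow$ (c) I would start from the band-sum representation of Proposition~\ref{p-ids-nc},
\[
  (\vol \cF)\, N(E) = \sum_n \meas\{ \theta \in \hat\Gamma \mid E_n(\theta) < E \},
\]
and read off the one-sided jump of the non-decreasing function $N$ at $E$:
\[
  (\vol \cF)\bigl(N(E{+}0) - N(E)\bigr) = \lim_{\eps \downarrow 0} \sum_n \meas\{ \theta \mid E \le E_n(\theta) < E + \eps \}.
\]
The interchange of $\lim_{\eps \downarrow 0}$ with $\sum_n$ is justified as follows: for any fixed $\eps_0 > 0$ the $\eps_0$-summands add up to $(\vol \cF)(N(E+\eps_0) - N(E))$, which is finite, and each summand is monotone non-increasing as $\eps \downarrow 0$, with pointwise limit $\meas\{\theta \mid E_n(\theta) = E\}$ by continuity of measure from above. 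Dominated convergence (dominating by the $\eps_0$-summands) then yields
\[
  (\vol \cF)\bigl(N(E{+}0) - N(E)\bigr) = \sum_n \meas\{ \theta \in \hat\Gamma \mid E_n(\theta) = E \}.
\]
Since all summands are non-negative, the left-hand side is strictly positive iff at least one summand is, which is exactly (c).

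The only mildly delicate point is the interchange of limit and summation, but as described it reduces to a one-line application of dominated convergence. In particular, no further measurability or continuity input on the band functions $\theta \mapsto E_n(\theta)$ is needed, so the proposition is essentially a bookkeeping corollary of the two preceding ones.
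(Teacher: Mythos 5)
Your proposal is correct and follows exactly the route the paper takes: (a)$\Leftrightarrow$(b) is read off from Proposition~\ref{p-spectralmeasure}, and (b)$\Leftrightarrow$(c) from the band-sum formula of Proposition~\ref{p-ids-nc} (the paper states the latter as ``immediate''; your dominated-convergence computation of the jump $N(E{+}0)-N(E)=\frac{1}{\vol\cF}\sum_n\meas\{\theta\mid E_n(\theta)=E\}$ is the detail it leaves implicit). Your remark that no extra measurability input is needed is also justified, since the sets $\{\theta\mid E_n(\theta)<E\}$ already occur in Proposition~\ref{p-ids-nc} and the sets you use are countable Boolean combinations of these.
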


\begin{proof}
  The equivalence of~(a) and~(b) follows from
  Proposition~\ref{p-spectralmeasure}. The equivalence of~(b) and~(c)
  is an immediate consequence of Proposition~\ref{p-ids-nc}.
\end{proof}

\begin{exm}
  \label{x-discontinous}
  In~\cite[Prop.~4]{KobayashiOS-89} a class of examples of periodic
  Laplacians on an infinite covering manifold with an
  $L^2$-eigenfunction is constructed using Atiyah's $L^2$-index
  theorem. 
This class includes in particular periodic Laplace operators on
  abelian covering manifolds, e.g., the principal spin-bundle of a
  connected sum of a $K3$-surface and a $4$-dimensional torus.

Note that the corresponding eigenvalue is strictly
  positive. This can be seen as follows: Brooks' Theorem
  \cite{Brooks-81} implies that the bottom of the spectrum is strictly
  positive for non-amenable groups. For the case of amenable groups
  the bottom of the spectrum equals $0$, but it cannot be an
  eigenvalue, which follows from~\cite{Sarnak-82b}
  and~\cite{Sullivan-87b}, cf.~\cite[Prop.~3]{Sunada-88}. 
\end{exm}


\section{Wegner estimate for alloy-type potentials}
\label{s-RAPWE}

Now we are in the position to prove Theorem~\ref{t-RAPWE} following
Wegner's original idea~\cite{Wegner-81} and using adaptations
from~\cite{Kirsch-96,Stollmann-00b,CombesHN-01}.

We will apply the Hellman-Feynman theorem, i.e., first order
perturbation theory, cf.~\cite{Kato-66} or~\cite{IsmailZ-88}, to the
purely discrete spectrum of $H_\omega^I$.  By
assumption~\eqref{e-lambda} in Definition~\ref{d-RAP} the derivatives
of the eigenvalues $E_n(\omega):= E_n^I (\omega) $ of $H_\omega^I$
obey
\begin{equation}
  \label{e-PosDer}
  \sum_{\gamma \in I^+}
          \frac \partial {\partial q_\gamma} E_n(\omega) =
  \sum_{\gamma \in I^+} \la \psi_n,(v\circ\gamma^{-1})\, \psi_n \ra \ge
  \lambda.
\end{equation}

Here $\psi_n$ denotes the eigenfunction corresponding to $E_n(\omega)$
and $I^+=I^+(v)$ as in~\eqref{eq:def.i+}. 

For $0< \varepsilon < 1/2$, let $\rho:=\rho_{E,\varepsilon}\colon \RR
\to [-1,0]$ be a smooth, monotone switch function, i.e., $\rho$
satisfies $ \rho\equiv -1$ on ${]}{-}\infty,E-\epsilon{]}$,
$\rho\equiv 0$ on ${[}E+\epsilon,\infty{[}$ and $\|\rho'\|_\infty \le
1/\epsilon$.  Then we have $\chi_{[E-\epsilon, E+\epsilon]} (x) \le
\int_{-2\epsilon}^{2\epsilon} \rho'(x+t) \dd t$ and thus by the
spectral theorem
\begin{equation*}
  P_\omega^I([E-\epsilon, E+\epsilon]) \le
  \int_{-2\epsilon}^{2\epsilon} \rho'(H_\omega^I+t) \dd t.
\end{equation*}
The chain rule implies
\begin{equation*}
  \sum_{\gamma \in I^+}
          \frac{\partial}{\partial q_\gamma}
          \rho(E_n(\omega)+t) =
   \rho'(E_n(\omega)+t)
         \sum_{\gamma \in I^+}
       \frac{\partial}{\partial q_\gamma} E_n(\omega)
\end{equation*}
which is by~\eqref{e-PosDer} bounded from below by $\lambda \,
\rho'(E_n(\omega)+t)$.  Thus we can divide by $\lambda >0$ and obtain
$\rho'(E_n(\omega)+t) \le \frac{1}{\lambda}\sum_{\gamma \in I^+}
\frac{\partial}{\partial q_\gamma}\rho(E_n(\omega)+t) $ and
consequently
\begin{equation*}
  \Tr \Big ( P_\omega^I([E-\epsilon, E+\epsilon]) \Big ) \le
     \frac 1 \lambda \int_{-2\epsilon}^{2\epsilon}
     \sum_{n \in \NN} \sum_{\gamma \in I^+}
           \frac{\partial}{\partial q_\gamma}\rho(E_n(\omega)+t)
        \dd t.
\end{equation*}
By our independence assumption on the various random ingredients of the model, 
the expectation value corresponds to 
an integration with respect to a product measure.
The averaging effect we need is produced by integration over a single coupling constant. 
Afterwards we take expectation over all the remaining randomness. 
\begin{multline*}
   \EE \bigl[ \Tr
             P_\bullet^I \bigl( [E-\epsilon, E+\epsilon] \bigr)
        \bigr] \le \\
   \frac 1 \lambda \int_{-2\epsilon}^{2\epsilon} \dd t \,
     \sum_{\gamma \in I^+}
        \EE \Bigl[ \int_{q_-}^{q_+}  f(q_\gamma)
              \sum_{n \in \NN} \frac{\partial}{\partial
                 q_\gamma}\rho(E_n(\bullet)+t)
                       \dd q_\gamma\Bigr].
\end{multline*}
Now, $\rho$ is increasing and $E_n$ is increasing in $q_\gamma$. Thus,
$\frac{\partial}{\partial q_\gamma}\rho(E_n(\omega)+t)\geq 0$ and the
modulus of the $\dd q_\gamma$-integral in the square brackets is
bounded by
\begin{equation*}
  \|f\|_\infty \int_{q_-}^{q_+} \sum_{n \in \NN}
             \frac{\partial\rho}{\partial q_\gamma}(E_n(\omega)+t)
               \dd q_\gamma=
     \|f\|_\infty \int_{q_-}^{q_+}
        \Tr \Bigl(
           \frac{\partial\rho}{\partial q_\gamma}(H_\omega^I+t)\Bigr)
         \dd q_\gamma.
\end{equation*}
Here, the integral on the right hand side is equal to
\begin{equation}
  \label{e-TrDiff}
  \Tr \bigl( \rho ( H_2+t ) - \rho ( H_1+t ) \bigr)
\end{equation}
where $H_1:= H_\omega^I +(q_--q_\gamma(\omega)) \cdot
(v\circ\gamma^{-1})$, $H_2:= H_1 +(q_+ - q_-) \cdot
(v\circ\gamma^{-1})$ and $q_-,q_+$ denote the two extremal values
which the random variable $q_\gamma$ may take.  Krein's trace
identity, see e.g.~\cite{BirmanY-93}, now tells us that
\eqref{e-TrDiff} equals
\begin{equation}
  \label{e-SSF}
  \int  \rho' (\wt E) \, \xi(\wt E, H_2+t, H_1+t) \dd \wt E
\end{equation}
where $\xi$ is the spectral shift function.

In the following definition we introduce a technical piece of notation 
which plays an important role in the sequel:
\begin{dfn}
  \label{d-constants}
  Let $p$ be the inverse H\"older exponent chosen in
  Theorem~\ref{t-RAPWE}. Let $0 < \alpha < 1$ be given by $1/p+\alpha
  =1$ and $q \in 2\NN$ be the smallest even integer satisfying $q \ge
  \max \{ 6, d/2+2\}$. Finally, $k$ denotes the smallest integer such
  that $k/q \ge 1/\alpha$ and $g(x):=(x+1)^{-k}$.
\end{dfn}

Since $H_\omega \ge 0$ for all $\omega$ the operator $g(H_\omega)$ is
well defined.  As discussed in Section~\ref{s-Trace}, $g(H_2)- g(H_1)$
is trace class and even belongs to $\cJ_{\alpha}$. Here,
$\cJ_{\alpha}$ denotes the (super) trace class ideal of compact
operators whose singular values are summable to the power $\alpha$.
This class of operators is discussed in more detail at the beginning of Section \ref{s-Trace}.
 Note that since $\alpha<1$ the ideal
$\cJ_\alpha$ is a subset of the trace class ideal.

The invariance principle, see e.g.~\cite{BirmanY-93}, tells us that
the modulus of the expression~\eqref{e-SSF} equals
\begin{equation*}
  \Bigl|\int\rho'(\wt E) \xi \bigl( g(\wt E-t), g(H_2), g(H_1) \bigr)
           \dd \wt E \Bigr|
\end{equation*}
The H\"older inequality for $ 1/p + \alpha =1$ gives an upper bound
\begin{equation*}
  \Bigl( \int (\rho'(\wt E))^p \dd \wt E \Bigr)^{1/p}
  \Bigl( \int_{\supp \rho'}
      \bigl|\xi(g(\wt E-t),g(H_2),g(H_1))\bigr|^{1/\alpha}
         \dd \wt E  \Bigr)^\alpha.
\end{equation*}
The first factor can be estimated by
\begin{equation*}
  \Bigl( \|\rho'\|_\infty^{p-1}
            \int \rho'(\wt E) \dd \wt E \Big)^{1/p}  \le
  \epsilon^{-1+1/p}
\end{equation*}
and the second obeys the upper bound,
\begin{equation*}
  \Bigl( \frac 1 k  (E+2)^{k+1}
       \int_\RR \bigl|\xi(E',g(H_2),g(H_1)) \bigr|^{1/\alpha}
          \dd E'   \Bigr)^\alpha.
\end{equation*}
By a result of~\cite{CombesHN-01},
\begin{equation}
\label{e-SuperTraceBd}
  \Bigl( \int_\RR
         \bigl|\xi(E',g(H_2),g(H_1))\bigr|^{1/\alpha}
          \dd E' \Bigr)^ \alpha \le
  \| g(H_2)-g(H_1) \|_{\cJ_\alpha}^\alpha.
\end{equation}
The operator $g(H_2)-g(H_1)$ appearing on the right side
of~\eqref{e-SuperTraceBd} is a kind of \emph{effective perturbation}.
To estimate its $\|\cdot\|_{\cJ_{\alpha}}$-norm we use the following
immediate consequence of Theorem~\ref{t-deffestP}. 

\begin{prp}
  \label{p-Veff-RAP}
  For given $p>1$ let $\alpha=1-1/p$, $k$ and $g$ be as in
  Definition~\ref{d-constants}.  Furthermore, let $H_1:= H_\omega^I
  +(q_--q_\gamma(\omega)) \, v\circ\gamma^{-1}$, $H_2:= H_1 +(q_+ -
  q_-) \cdot(v\circ\gamma^{-1})$ be as defined earlier in this
  section.  Then there exists a constant $C_\alpha$, which does not
  depend on $\omega$, $I$ and $\gamma$, such that
  \begin{equation}
    \|g(H_2)-g(H_1)\|_{\cJ_{\alpha}}\le C_\alpha.
  \end{equation}
\end{prp}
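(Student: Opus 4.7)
Note first that $H_1$ and $H_2$ are both nonnegative self-adjoint Schr\"odinger operators: they arise from the RAP family by replacing only the coupling constant $q_\gamma(\omega)$ by the fixed admissible values $q_-$ and $q_+$ respectively, so each is a Dirichlet restriction of the form $H_{\omega'}^I$ for some $\omega' \in \Omega$. Hence $g(H_i) = (H_i+1)^{-k}$ is a well-defined bounded operator with norm at most $1$. The plan is to express the difference $g(H_2) - g(H_1)$, via a telescoping resolvent identity, as a finite sum of operators of the form $(H_2+1)^{-a} (v\circ\gamma^{-1}) (H_1+1)^{-b}$ with $a+b = k+1$ and $a,b \ge 1$, and then to invoke Theorem~\ref{t-deffestP} to bound each summand in the Schatten ideal $\cJ_\alpha$ with constants uniform in $\omega$, $I$, $\gamma$.

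\textbf{Telescoping identity.} Setting $R_i := (H_i+1)^{-1}$, the standard identity $R_2^k - R_1^k = \sum_{j=0}^{k-1} R_2^j (R_2-R_1) R_1^{k-1-j}$ combined with $R_2 - R_1 = -(q_+ - q_-)\, R_2 (v\circ\gamma^{-1}) R_1$ yields
\begin{equation*}
g(H_2) - g(H_1) = -(q_+ - q_-) \sum_{j=0}^{k-1} (H_2+1)^{-(j+1)} (v\circ\gamma^{-1}) (H_1+1)^{-(k-j)}.
\end{equation*}
The arithmetic constraints of Definition~\ref{d-constants} --- namely $q \ge \max\{6,d/2+2\}$ and $k/q \ge 1/\alpha$ --- are tailored precisely to this setup: a block of $q$ resolvent factors multiplied by the bounded, compactly supported $v\circ\gamma^{-1}$ yields a Hilbert--Schmidt (in fact trace-class) contribution through Sobolev embedding on manifolds of bounded geometry of dimension $d$, while the ratio $k/q \ge 1/\alpha$ promotes this, via the H\"older--Schatten inequality, to membership in the superclass $\cJ_\alpha$. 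This is exactly the packaging provided by Theorem~\ref{t-deffestP}, which I apply termwise to the sum above.

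\textbf{Uniformity, conclusion, main obstacle.} It remains only to observe that the bound from Theorem~\ref{t-deffestP} holds with constants uniform in $\omega$, $I$, $\gamma$. Uniformity in $\gamma$ is clear because $v\circ\gamma^{-1}$ is a $g_0$-isometric translate of $v$, leaving its $L^{p(d)}$-norm and the diameter of its support invariant. Uniformity in $\omega$ rests on the relative boundedness of $\{g_\omega\}_\omega$ (Definition~\ref{def:rel.bdd}) and the uniform infinitesimal $\Delta_\omega$-boundedness of $V_\omega$ (Remark~\ref{r-pot}). Uniformity in $I$ comes from Lemma~\ref{lem:rel.bd.aggl}, giving uniform bounded geometry of the smoothed agglomerates $(\Lambda(I),g_\omega)$. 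Summing the $k$ terms and absorbing the factor $q_+ - q_-$ then yields the asserted constant $C_\alpha$. The real obstacle in this plan is not the algebraic reduction above but the content of Theorem~\ref{t-deffestP} itself: establishing uniform singular-value estimates for sandwiched resolvent products on a family of Dirichlet realizations with a non-flat, $\omega$-dependent metric, which rests on the uniform Sobolev extension and embedding results of the appendix combined with trace-class bounds in the Birman--Solomyak / Combes--Hislop--Nakamura style.
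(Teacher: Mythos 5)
Your proposal is, in substance, the paper's own proof: Proposition~\ref{p-Veff-RAP} is introduced there as an \emph{immediate consequence} of Theorem~\ref{t-deffestP}, and you too defer all of the analytic content to that theorem. There is, however, one logical wrinkle in how you invoke it. You say you apply Theorem~\ref{t-deffestP} \emph{termwise} to the summands $(H_2+1)^{-(j+1)}(v\circ\gamma^{-1})(H_1+1)^{-(k-j)}$ of your telescoping expansion, but the theorem says nothing about such sandwiched products --- it bounds the full difference $\|g(H^I_{\omega^2})-g(H^I_{\omega^1})\|_{\cJ_\alpha}$ in one stroke. The correct (and shorter) application is simply to observe that $H_1=H^I_{\omega^1}$ and $H_2=H^I_{\omega^2}$, where $\omega^1,\omega^2$ are obtained from $\omega$ by freezing the coupling constant at site $\gamma$ to $q_-$ resp.\ $q_+$; these configurations differ in exactly one coordinate and carry the same metric, which is exactly the hypothesis of the theorem. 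Your telescoping identity is therefore redundant for the proposition itself, although it is indeed the opening move in the proof of Theorem~\ref{t-deffestP} (compare the decomposition of $D_\eff$ in Section~\ref{s-Trace}, carried out there for the RAM case). One further small inaccuracy: $v$ is only assumed to lie in $L^{p(d)}_{\mathrm c}(X,g_0)$, not to be bounded, so the phrase ``bounded, compactly supported $v\circ\gamma^{-1}$'' overstates the hypotheses; what actually controls the perturbation is the uniform relative bound of Remark~\ref{r-pot}. You correctly identify that the genuine work lies in Theorem~\ref{t-deffestP} and in the uniform Sobolev and trace-class machinery of the appendix and Sections~\ref{s-CommRel}--\ref{s-TraceRes}.
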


Collecting the estimate of this section we obtain the desired result:
\begin{equation}
  \label{e-ExplicitConst}
  \EE \bigl[ \Tr \bigl ( P_\bullet^I([E-\epsilon, E+\epsilon])
              \bigr)\bigr] \le
  \frac{4\, (C_\alpha)^\alpha  \|f\|_\infty} {k^\alpha \lambda }
     \, (E+2)^{\alpha(k+1)} \, (\# I^+) \, \epsilon^{1/p}.
\end{equation}



\section{Wegner estimate for alloy-type metrics}
\label{s-RAMWE}

This section is devoted to the proof and discussion of
Theorem~\ref{t-RAMWE}.  Without loss of generality we may assume
$\sum_\gamma u(\gamma^{-1} x)\equiv 1$ on $X$ by replacing simultaneously the single
site deformation $u(x)$ by $u(x)/\sum_{\gamma}u(\gamma^{-1}x)$ and
$g_0(x)$ by $g_0(x) \sum_\gamma u(\gamma^{-1} x)$.  In the sequel we
will tacitly identify $\Omega$ and $\bigtimes\nolimits_{\gamma\in
  \Gamma} \RR$ via $\omega = \{r_\gamma (\omega)\}_\gamma$.

The following lemma describes how eigenvalues are moved by a special
change of parameters in the random Hamiltonian.  It is an analogue of
estimate~\eqref{e-PosDer}.

\begin{lem}
  \label{l-Ableitung}
  Denote by $E_n(\omega)=E_n^I(\omega)$ the eigenvalues of
  $-\Delta_\omega^I$.  Then
  \begin{equation*}
     \sum_{\gamma\in I^+}
            \frac{\partial E_n(\omega)}{\partial r_\gamma}
       = - E_n(\omega)
  \end{equation*}
  for all $n \in \NN$, $\omega \in \Omega$, and $I \subset \Gamma$.
  Here $I^+=I^+(u)$ as in~\eqref{eq:def.i+}.
\end{lem}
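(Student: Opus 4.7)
The plan is to exploit a global conformal rescaling symmetry of the Dirichlet Laplacian on $\Lambda(I)$. The crucial observation is that uniformly shifting $r_\gamma \to r_\gamma + s$ for \emph{all} $\gamma \in I^+$ multiplies the metric $g_\omega$ by the constant $e^s$ on $\Lambda(I)$. Indeed, by the definition of $I^+ = I^+(u)$, the summand $e^{r_\gamma(\omega)} u(\gamma^{-1}x)$ in the formula \eqref{atm} vanishes for $x \in \Lambda(I)$ whenever $\gamma \notin I^+$, so the shift changes $a_\omega$ only by the overall factor $e^s$ on the relevant domain.

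Given this, let $\omega_s$ denote the configuration obtained from $\omega$ by replacing $r_\gamma$ by $r_\gamma + s$ for each $\gamma \in I^+$. By Step~1 we have $g_{\omega_s}(x) = e^s g_\omega(x)$ on $\Lambda(I)$. Next I would invoke the standard conformal scaling of the Laplace--Beltrami operator: if $c>0$ is a constant and $g' = c \, g$, then $-\Delta_{g'} = c^{-1}(-\Delta_g)$ as differential operators, the Dirichlet boundary conditions are preserved, and the multiplication $U \colon L^2(\Lambda(I), g) \to L^2(\Lambda(I), cg)$, $Uf = c^{-d/4} f$, is unitary and intertwines $c^{-1}(-\Delta_g)$ with $-\Delta_{cg}$. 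Consequently the Dirichlet spectra satisfy
\begin{equation*}
 \sigma\bigl(-\Delta_{\omega_s}^I\bigr) = e^{-s}\, \sigma\bigl(-\Delta_\omega^I\bigr),
 \qquad \text{i.e.} \qquad E_n^I(\omega_s) = e^{-s}\, E_n^I(\omega)
\end{equation*}
for every $n \in \NN$ and every $s \in \RR$.

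Finally, the sum $\sum_{\gamma \in I^+} \partial_{r_\gamma} E_n(\omega)$ is precisely the directional derivative of $E_n$ at $\omega$ in the direction of the all-ones vector on $I^+$, which by the chain rule equals $\frac{d}{ds}\big|_{s=0} E_n^I(\omega_s)$. Differentiating the exact identity above at $s=0$ yields
\begin{equation*}
 \sum_{\gamma \in I^+} \frac{\partial E_n(\omega)}{\partial r_\gamma}
   = \frac{d}{ds}\bigg|_{s=0} e^{-s} E_n^I(\omega) = -E_n^I(\omega),
\end{equation*}
as claimed. The only minor subtlety is the existence of the individual partial derivatives at possibly degenerate eigenvalues; this can be sidestepped entirely by interpreting the left-hand side as the directional derivative above, which is well defined from the exact global identity without any perturbation-theoretic input. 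There is no real obstacle beyond recalling the conformal covariance $-\Delta_{cg} = c^{-1}(-\Delta_g)$ for constant $c$.
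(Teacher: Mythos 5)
Your proof is correct and is essentially the paper's own argument: both rest on the observation that shifting all coupling constants $r_\gamma$, $\gamma\in I^+$, by $s$ rescales $g_\omega$ on $\Lambda(I)$ by the constant factor $\mathrm e^{s}$, whence $E_n^I(\omega_s)=\mathrm e^{-s}E_n^I(\omega)$ by conformal covariance of the Dirichlet Laplacian, and differentiation at $s=0$ gives the claim. Your explicit justification of why only $\gamma\in I^+$ contribute on $\Lambda(I)$, and the remark on interpreting the left-hand side as a directional derivative, are welcome refinements but do not change the route.
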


\begin{proof}
  Since $g_{\omega + t(1,\dots,1)}{\restriction}_{\Lambda(I)} = \mathrm e^t
  g_\omega{\restriction}_{\Lambda(I)}$ for $(1, \dots, 1) \in \RR^{I^+}$,
the operator $\Delta_{\omega + t(1,\dots,1)}^I$ is a conformal perturbation 
of $\Delta_{\omega}^I$ with perfactor $\mathrm e^{-t}$. Hence
  \begin{equation*}
    E_n (\omega+t (1, \dots,1)) = \mathrm e^{-t} E_n(\omega).
  \end{equation*}
  This gives
  \begin{equation*}
    \sum_{\gamma\in I^+}
         \frac{\partial E_n(\omega)}{\partial r_\gamma} =
    \frac{\partial}{\partial t} \Bigl|_{t=0} \Bigr.
              E_n(\omega+t (1,\dots,1))
     = - E_n(\omega),
  \end{equation*}
  and the proof is finished.
\end{proof}

\begin{rem}
If we consider an eigenvalue which is bounded away from zero, the
lemma tells us that the absolute value of its derivative has a
positive lower bound. Thus it is ensured, that this eigenvalue is
moved by the chosen change in the coupling constants.  This approach
is analogous to the vector field method of \cite{Klopp-95a} and related to Wegner
estimates for multiplicative perturbations.
\end{rem}

We have to analyze the change of elements $\omega \in \Omega$ at
a single coordinate. To do so, we define $\theta_\gamma^s (\omega)$ for
$\gamma \in \Gamma$ and $s\in \RR$ by
\begin{equation*}
  \bigl(\theta_\gamma^s (\omega)\bigr)_\beta :=
  \begin{cases}
      \omega_\beta, & \text{for $\beta \neq \gamma$,} \\
      s,            & \text{for $\beta = \gamma$,}
  \end{cases}
\end{equation*}
i.e., the sequence $\theta_\gamma^s (\omega)$ coincides with $\omega$
up to position $\gamma$, where its value is $s$.

Let $\rho$ be as in Section~\ref{s-RAPWE}. Using
Lemma~\ref{l-Ableitung}, the chain rule, and the arguments from
Section~\ref{s-RAPWE}, we obtain for $E_n(\omega) \ge 1/a$
\begin{equation*}
0 \le \rho'(E_n(\omega)+t) 
\le 
a \Bigl ( -\sum_{\gamma\in I^+} \frac{\partial \rho}{\partial r_\gamma}(E_n(\omega)+t)\Bigr )
\end{equation*}
and thus the bound
\begin{equation*}
   \EE \bigl[ \Tr \bigl ( P_\bullet^I([E-\epsilon, E+\epsilon])
                \bigr)\bigr] \le a
     \int_{-2\epsilon}^{2\epsilon}
         \sum_{\gamma \in I^+} \EE ( T_\bullet (\gamma)) \dd t
\end{equation*}
with
\begin{equation*}
   T_\omega (\gamma) :=
    - \int  h(r_\gamma) \sum_{n \in \NN}
                \frac{\partial}{\partial r_\gamma}
         \rho(E_n(\omega)+t) \dd r_\gamma.
\end{equation*}
Let $\omega^1:= \theta_\gamma^0(\omega)$. Since $\omega^1$ does not
depend on $r_\gamma$, we can replace $\frac{\partial}{\partial
  r_\gamma} \rho(E_n(\omega)+t)$ by $\frac{\partial}{\partial
  r_\gamma}\rho(E_n(\omega)+t)-\frac{\partial}{\partial
  r_\gamma}\rho(E_n (\omega^1)+t) $. Such a normalisation 
is also used in \cite{HislopK-02}. Thus
\begin{align*}
  T_\omega(\gamma) & =
   - \int  h(r_\gamma) \frac{\partial}{\partial r_\gamma}
         \Bigl(\sum_{n \in \NN}\rho(E_n(\omega) +t)
         -\sum_{n \in \NN}\rho(E_n(\omega^1)+t) \Bigr)
            \dd r_\gamma \\&=
   - \int  h(r_\gamma) \frac{\partial}{\partial r_\gamma}
         \Bigl(\Tr_\omega \rho(H_\omega^I +t) - \Tr_1\rho(H_1+t)
             \Bigr) \dd r_\gamma.
\end{align*}
Here, $H_1:=H_{\omega^1}^I$, $\Tr_1$ denotes the trace in the space
$L^2(\Lambda(I), g_{\omega^1})$ and $\Tr_\omega$ denotes the trace in
the space $L^2(\Lambda(I), g_{\omega})$.  By partial integration for
functions of bounded variation, this can be bounded in modulus by
\begin{equation}
  \label{e-thetaMax}
  \|h\|_{\mathrm{BV}} \, \max_{ s \in \supp h}
     \Bigl| \Tr_\omega \rho(H_{ \theta_\gamma^s (\omega)
  }^I+t)-\Tr_1\rho(H_1+t) \Bigr|.
\end{equation}
That bounded variation regularity of the density function is sufficient 
in such a situation was laready noted in \cite{KostrykinV-06}.
Choosing $\widetilde{s}$ such that the maximum is attained and setting
$H_2 := H_{\omega^2}^I$, $\omega^2:= \theta_\gamma^{\widetilde{s}}
(\omega)$, we can finally bound $|T_\omega (\gamma)|$ by
\begin{equation*}
  \|h\|_{\mathrm{BV}} \Bigl| \Tr_2 \rho(H_2+t)-\Tr_1\rho(H_1+t) \Bigr|.
\end{equation*}
Here, $\Tr_2$ denotes the trace in the space $L^2(\Lambda(I),
g_{\omega^2})$.  To be able to apply the theory of the spectral shift
function, we want to transform the two operators $H_1$ and $H_2$ into
the same Hilbert space.  To do so, we use the operator
$S=S_{\omega^1,\omega^2}$ defined in~\eqref{e-Somega}.  It is a
multiplication operator given by
\begin{equation}
  \label{S}
   S \colon L^2(\Lambda(I),g_{\omega^1}) \to
         L^2(\Lambda(I),g_{\omega^2}), \quad
   S\phi(x) =
      \Bigl( \frac{a_{\omega^1}}{a_{\omega^2}}\Bigr)^{d/4}
                      \phi(x),
\end{equation}
with $a_\omega$ defined in~\eqref{atm}. Now both operators $\wt H_1 :=
SH_1S^*$ and $H_2$ act on the same Hibert space
$L^2(\Lambda(I),g_{\omega^2})$.  Since $S$ is unitary, we have
$\Tr_1\rho(H_1+t)=\Tr_2 \rho(\wt H_1+t)$.

Similarly as in Section~\ref{s-RAPWE} we can bound $|\Tr_2
[\rho(H_2+t) -\rho(\wt H_1+t)]|$ by
\begin{equation*}
   \epsilon^{-1+1/p} \,
       \Big( \frac{1}{k} (E+2)^{k+1}\Big)^\alpha
                \|g(H_2)-g(\wt H_1)\|_{\cJ_{\alpha}}^\alpha.
\end{equation*}

Again we are left to estimate the $\|\cdot\|_{\cJ_\alpha}$-norm of the
effective perturbation $g(H_2)-g(\wt H_1)$. This is provided by the
following direct consequence of Theorem~\ref{t-deffestM}. 

\begin{prp}
  \label{p-Veff-RAM}
  For given $p>1$ let $\alpha=1-1/p$, $k$ and $g$ be as in
  Definition~\ref{d-constants}.  Furthermore, let $\wt H_1:= S
  H_{\omega^1}^I S^*$ with $\omega^1:=\theta_\gamma^0(\omega)$ and
  $H_2 := H_{\omega^2}^I$ with $\omega^2:=\theta_\gamma^{\widetilde
    s}(\omega)$ be as defined above.  Then there exists a constant
  $\hat C_\alpha$, which does not depend on $\omega$, $I$ and
  $\gamma$, such that
  \begin{equation}
    \|g(H_2)-g(\wt H_1)\|_{\cJ_{\alpha}}\le \hat C_\alpha.
  \end{equation}
\end{prp}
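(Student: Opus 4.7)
The plan is to reduce the $\cJ_\alpha$-estimate to the general resolvent-difference bound of Theorem~\ref{t-deffestM} by exhibiting $\wt H_1 - H_2$ as a second-order differential operator whose coefficients are uniformly bounded and supported in a compact set. Since $\omega^1=\theta_\gamma^0(\omega)$ and $\omega^2=\theta_\gamma^{\widetilde s}(\omega)$ agree outside the $\gamma$-th coordinate, the conformal factors differ only by $(\mathrm e^{\widetilde s}-1)\,u(\gamma^{-1}\cdot)$, so $g_{\omega^1}$ and $g_{\omega^2}$ coincide outside $K_\gamma:=\supp(u\circ\gamma^{-1})$ and the unitary $S$ from~\eqref{S} is multiplication by $1$ off $K_\gamma$. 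Because $u$ is compactly supported, $K_\gamma$ meets a number of fundamental domain translates that is independent of $\gamma$ by $\Gamma$-equivariance.

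Next I would expand the difference via the telescoping identity
\begin{equation*}
  g(H_2)-g(\wt H_1)
  = (H_2+1)^{-k}-(\wt H_1+1)^{-k}
  = \sum_{l=1}^{k}(H_2+1)^{-l}\,(\wt H_1 - H_2)\,(\wt H_1+1)^{-(k-l+1)},
\end{equation*}
viewed on $L^2(\Lambda(I),g_{\omega^2})$. The middle factor $\wt H_1 - H_2$ is a second-order differential operator whose coefficients vanish outside $K_\gamma$ and are bounded, together with all their derivatives, in terms of $u$ and $\supp h$ only. Each summand is therefore a product $(H_2+1)^{-l}\,D\,(\wt H_1+1)^{-(k-l+1)}$ with $D$ a localized second-order operator, i.e.\ exactly the type treated by Theorem~\ref{t-deffestM}. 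The choices $q\ge d/2+2$ and $k/q\ge 1/\alpha$ in Definition~\ref{d-constants} are designed so that $\chi_{K_\gamma}(H+1)^{-q/2}$ lies in the Schatten class $\cJ_q$ by Sobolev embedding on a region of bounded geometry, and redistributing sufficiently many resolvent factors across the telescoping sum places each term in $\cJ_\alpha$ via H\"older for Schatten ideals.

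The main obstacle, dealt with in Sections~\ref{s-Trace}--\ref{s-TraceRes} and the appendix, is to make all these trace-ideal estimates \emph{uniform} in $\omega\in\Omega$, $I\subset\Gamma$ and $\gamma\in\Gamma$. This uniformity rests on three ingredients: the uniform bounded geometry of $(\Lambda(I),g_\omega)$ from Lemma~\ref{lem:rel.bd.aggl}, the $I$- and $\omega$-uniform Sobolev extension/embedding constants proved in the appendix, and the fact that the constants $C_{\rel,k}$ for $\{g_\omega\}_\omega$ depend only on $u$ and $\supp h$, as noted directly after Definition~\ref{d-RAM}. With these in place, Theorem~\ref{t-deffestM} bounds each of the $k$ summands above by a constant depending only on the data appearing in the statement of Theorem~\ref{t-RAMWE}, and summing yields the required $\hat C_\alpha$.
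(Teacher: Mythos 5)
Your proposal is correct and follows essentially the same route as the paper: Proposition~\ref{p-Veff-RAM} is obtained as a direct instance of Theorem~\ref{t-deffestM}, and your sketch of that theorem's proof (the telescoping resolvent identity, which coincides with the paper's after the substitution $l=k-m$; the localization of $\wt H_1-H_2$ near $\gamma\bar\cF$ via the compact support of $u$; and the uniformity coming from the bounded-geometry and Sobolev results of the appendix) matches Sections~\ref{s-Trace}--\ref{s-TraceRes}. The one step you gloss over is that ``redistributing resolvent factors'' past the cutoff functions is not free, since they do not commute with the resolvents: this is exactly what the iterated commutator expansion of Proposition~\ref{nakamura} (Theorem~\ref{t-nakamura}) accomplishes, together with Lemmas~\ref{hilf1} and~\ref{hilf2}, and it is the technical core of the paper's argument.
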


Thus we obtain, for alloy type metrics, the Wegner estimate
\begin{equation}
  \EE \bigl[ \Tr \bigl (P_\bullet^I([E-\epsilon, E+\epsilon])
           \bigr)\bigr] \le
  \frac{4a \, (\hat C_\alpha)^\alpha\,
                  \|h\|_{\mathrm{BV}}}{k^\alpha}
          (E+2)^{\alpha(k+1)} \, (\#I^+)\, \epsilon^{1/p}.
\end{equation}

\begin{exm}
  While our Wegner estimate {\em for alloy type potentials} is valid
  for all bounded energy intervals, in the case of an {\em alloy type
    metric} we are only able to prove it for energy intervals away
  from zero.  Let us indicate the reason why our proof does not apply
  to low energies in the random metric case.

  For this we use a simplified example, where the probability space is
  the one-dimensional interval $[1,2]$, and the random operator the
  multiple of the Laplace operator on Euclidean space $H_s=-s\Delta$
  for $s \in[1,2]$.  The Fourier transformation of $H_s$ is $f(s,p):=s p^2$,
  and its derivative with respect to $s$ is $p^2$.  This
  derivative is positive, except for the value $p=0$.  This
  shows that moving the perturbation parameter $s$ smears out the
  spectrum of $H_s$ on any spectral subspace corresponding to energies
  away from zero. However, the effect of the perturbation parameter on
  a spectral subspace corresponding to energies around zero can be
  arbitrarily small.  A similar phenomenon occurs in
  Lemma~\ref{l-Ableitung} and thus in Theorem~\ref{t-RAMWE}.
\end{exm}


\section{Trace class bounds on the effective perturbations}
\label{s-Trace}

In this section we estimate certain effective perturbation operators,
which played a crucial role in Sections~\ref{s-RAPWE}
and~\ref{s-RAMWE}.  More precisely, we want to show that the effective
perturbations are in some (super) trace class spaces
$(\cJ_\alpha,\Vert \cdot \Vert_{\cJ_\alpha})$, and need to bound these
operators in the $\Vert\cdot \Vert_{\cJ_\alpha}$-topology. More
informations on (super) trace class spaces can be found, e.g.,
in~\cite{BirmanS-77,Simon-79,CombesHN-01}.

Let us start by shortly introducing the $\Vert\cdot
\Vert_{\cJ_\alpha}$-topology:
  For $\alpha > 0$, $\cJ_\alpha=\cJ_\alpha(\mathcal H)$ is a subspace
  of the compact operators on a Hilbert space $\mathcal H$. For $A \in
  \cJ_\alpha$ we define 
  \begin{equation*}
    \|A\|_{\cJ_\alpha}:=\Big(\sum_{n\in\NN} \mu_n(A)^\alpha\Big)^{1/\alpha}
  \end{equation*}
  to be the  $\ell^\alpha$-quasi-norm of the singular values of $A$.
  Here we denote by $\mu_n(A)$ the singular values of the operator $A$.
  It has the following properties:
\begin{itemize}
\item {\em Quasi-norm property:}
  We have, for $c \in \CC$ and $A,B \in \cJ_\alpha$:
  \begin{equation*}
  \Vert c A \Vert_{\cJ_\alpha} = \vert c \vert \, \Vert A
  \Vert_{\cJ_\alpha}
  \end{equation*}
  and
  \begin{align*}
    \Vert A + B \Vert_{\cJ_\alpha}^\alpha &\le \Vert A
      \Vert_{\cJ_\alpha}^\alpha + \Vert B \Vert_{\cJ_\alpha}^\alpha
    \quad \text{for $\alpha \le 1$},
    \\
    \Vert A + B \Vert_{\cJ_\alpha} &\le
      \Vert A \Vert_{\cJ_\alpha} +
    \Vert B \Vert_{\cJ_\alpha} \quad \text{for $\alpha \ge 1$}.
  \end{align*}
  These inequalities imply that there are constants $C(\alpha,m) > 0$
  such that
  \begin{equation*}
    \Bigl\Vert \sum_{j=1}^m A_j \Bigr\Vert_{\cJ_\alpha} \le
      C(\alpha,m)\sum_{j=1}^m \Vert A_j \Vert_{\cJ_\alpha}.
  \end{equation*}
  For $\alpha \ge 1$ one can choose $C(\alpha,m)=1$.
\item {\em H\"older inequality:} Let $1/\alpha + 1/\beta = 1/\gamma$
  for \emph{any} $\alpha, \beta, \gamma > 0$ and $A \in \cJ_\alpha, B
  \in \cJ_\beta$.  Then $AB \in \cJ_\gamma$ and
  \begin{equation*}
  \Vert A B \Vert_{\cJ_\gamma} \le \Vert A \Vert_{\cJ_\alpha} \, \Vert
  B \Vert_{\cJ_\beta}.
  \end{equation*}
\item {\em Ideal property:} Let $A \in \cJ_\alpha$ and $B$ be a
  bounded operator on the Hilbert space $\mathcal H$. Then we have $A
  B$, $B A \in \cJ_\alpha$ and
  \begin{equation*}
    \Vert A B \Vert_{\cJ_\alpha}\le \Vert A \Vert_{\cJ_\alpha} \, \Vert
    B \Vert, \quad \Vert B A \Vert_{\cJ_\alpha} \le \Vert B \Vert \,
    \Vert A \Vert_{\cJ_\alpha},
  \end{equation*}
  where $\Vert \cdot \Vert$ denotes the usual norm of bounded
  operators on $\mathcal H$.
\item \emph{Monotonicity:} For $\alpha \le \beta$ and $A \in
  \cJ_\alpha$, we have $A \in \cJ_\beta$ and $\Vert A \Vert_{J_\beta}
  \le \Vert A \Vert_{J_\alpha}$.
\end{itemize}

For $p > 1$ let $\alpha=1-1/p$, $q$ be even, $k \in \NN$ and
$g(x)=(1+x)^{-k}$ be given as in Definition~\ref{d-constants}. With
this choice of parameters, the following results hold:
\begin{thm}
  \label{t-deffestP}
  Let $H_\omega$ be a family satisfying RAP. There exists a constant
  $C_\alpha > 0$, such that
  \begin{equation*}
    \Vert g(H^I_{\omega^2}) - g(H^I_{\omega^1})
         \Vert_{\cJ_\alpha}
    \le C_\alpha,
  \end{equation*}
  for all subsets $I \subset \Gamma$ and all $\omega^1$,$ \omega^2 \in
  \Omega$ differing in only one coordinate.
\end{thm}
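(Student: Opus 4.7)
The plan is to expand the difference via the $k$-fold resolvent identity, apply H\"older's inequality for Schatten quasi-norms, and reduce the statement to uniform trace-ideal estimates for localized resolvents on manifolds of bounded geometry. Since $\omega^1$ and $\omega^2$ differ in only one coordinate $\gamma_0 \in \Gamma$, one has
\begin{equation*}
W := H^I_{\omega^2} - H^I_{\omega^1} = c_{12}\, v\circ\gamma_0^{-1}, \qquad |c_{12}| \le q_+ - q_-,
\end{equation*}
and the iterated resolvent identity gives
\begin{equation*}
g(H^I_{\omega^2}) - g(H^I_{\omega^1}) = -\sum_{j=0}^{k-1} (H^I_{\omega^2}+1)^{-(j+1)}\, W\, (H^I_{\omega^1}+1)^{-(k-j)}.
\end{equation*}
By the quasi-triangle inequality for $\|\cdot\|_{\cJ_\alpha}$ (with a constant $C(\alpha,k)$ depending only on $\alpha$ and $k$), it then suffices to bound each summand uniformly in $\omega^1$, $\omega^2$, $I$ and $\gamma_0$.

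Next, since $v \ge 0$ lies in $L^{p(d)}_{\mathrm c}(X,g_0)$, its square root $v^{1/2}$ is compactly supported and in $L^{2p(d)}_{\mathrm c}$. I would split $W = c_{12}(v^{1/2}\circ\gamma_0^{-1})(v^{1/2}\circ\gamma_0^{-1})$ and apply H\"older in $\cJ_\alpha$ with exponents $1/\beta_1 + 1/\beta_2 = 1/\alpha$; this reduces each summand to a product of two factors of the form $\|v^{1/2}\circ\gamma_0^{-1}\,(H^I_{\omega^i}+1)^{-m}\|_{\cJ_\beta}$ with $m\in\{1,\ldots,k\}$ and $\beta \ge \alpha$. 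The parameter choices in Definition \ref{d-constants} (even $q \ge \max\{6, d/2+2\}$ and $k$ minimal with $k\alpha \ge q$) are precisely what is needed so that the $k+1$ resolvent factors, each in a $\cJ_q$-type class, compose via H\"older into $\cJ_\alpha$.

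Finally, I would establish the uniform trace-ideal bound
\begin{equation*}
\|v^{1/2}\circ\gamma_0^{-1}\,(H^I_\omega+1)^{-m}\|_{\cJ_\beta} \le C_{m,\beta}
\end{equation*}
using that, by Lemma \ref{lem:rel.bd} and Lemma \ref{lem:rel.bd.aggl}, the manifolds $(\Lambda(I), g_\omega)$ are of bounded geometry with constants independent of $\omega$ and $I$, and using the uniform infinitesimal $\Delta_\omega$-boundedness of $V_\omega$ from Remark \ref{r-pot}. The $\Gamma$-equivariance via the covering isometries $U_{(\omega,\gamma)}$ reduces the $\gamma_0$-dependence to $\gamma_0 = e$, removing that parameter as well. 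The main obstacle is precisely the $I$-uniformity of this Schatten-norm estimate: a direct Weyl-asymptotics argument on $\Lambda(I)$ would produce constants growing with $\vol\Lambda(I)$ and be useless. One must exploit the compact support of $v^{1/2}$ by invoking Gaussian heat-kernel bounds on bounded geometry, which force the resolvent kernel of $(H^I_\omega + 1)^{-m}$ to decay away from the diagonal at a rate independent of $I$, so that the Schatten norm of the localized operator is controlled purely by local data in a fixed-size neighbourhood of $\supp v\circ\gamma_0^{-1}$.
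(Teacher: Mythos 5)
Your opening moves coincide with the paper's: the $k$-fold resolvent identity for $g(H^I_{\omega^2})-g(H^I_{\omega^1})$, the observation that the perturbation $W=c_{12}\,v\circ\gamma_0^{-1}$ is localized, the quasi-triangle and H\"older inequalities in the $\cJ_\alpha$ scale, and the reduction of the $\gamma_0$-dependence by equivariance. The divergence — and the gap — is in how the localization is exploited. You split $W=(v^{1/2}\circ\gamma_0^{-1})^2$ and reduce everything to a single estimate $\|v^{1/2}\circ\gamma_0^{-1}(H^I_\omega+1)^{-m}\|_{\cJ_\beta}\le C_{m,\beta}$ with $1/\beta_1+1/\beta_2=1/\alpha>1$, to be proved ``by invoking Gaussian heat-kernel bounds.'' This does not work as stated. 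Off-diagonal kernel decay controls the Hilbert--Schmidt norm ($\cJ_2$) of a localized resolvent power, but it gives no access to $\cJ_\beta$ for $\beta<2$, let alone for the quasi-norm range $\beta<1$ that your H\"older split forces on at least one factor (e.g.\ in the extreme summand $(H_2+1)^{-1}W(H_1+1)^{-k}$, where the factor carrying only one resolvent power can at best lie in $\cJ_q$ with $q>d/2$, so the other must lie in $\cJ_{\beta_2}$ with $\beta_2$ close to $\alpha<1$). Small Schatten exponents are obtained by \emph{factoring} the operator into a product of many compact pieces, and the obstruction to doing so is that the cutoff does not commute with the resolvent: one must redistribute the localization across all $m$ resolvent factors. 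Your own remark that the ``$k+1$ resolvent factors, each in a $\cJ_q$-type class, compose via H\"older into $\cJ_\alpha$'' describes exactly this multi-factor strategy, but it is inconsistent with your two-factor split and you supply no mechanism for it.

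That mechanism is the heart of the paper's proof and is absent from yours. The paper inserts \emph{smooth} cutoffs $f,F$ with $f\equiv1$ on a neighbourhood of $\supp v\circ\gamma_0^{-1}$ (smoothness matters: $v^{1/2}$ is only $L^{2p(d)}$ and could not play this role), writes each summand as $[f(H_2+1)^{-(k-m)}]^*\,W\,[f(H_1+1)^{-(m+1)}]$, and then proves a Nakamura-type commutator expansion (Proposition~\ref{nakamura} / Theorem~\ref{t-nakamura}) expressing $fR^\nu$ as a finite sum of products $(f_{i1}RB_{i1})\cdots(f_{i\nu}RB_{i\nu})$ of $\nu$ localized single-resolvent factors with uniformly bounded $B_{ij}$. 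Each factor $f_{ij}R$ lies in $\cJ_q$ uniformly in $\omega$, $I$, $\gamma_0$ — this single-resolvent bound (Proposition~\ref{FR}) is itself proved not by heat kernels but by the Sobolev embedding on manifolds of bounded geometry, the extension operator of Theorem~\ref{thm:ext.op} to remove the $\Lambda(I)$-dependence, the Hilbert--Schmidt criterion of Theorem~\ref{t-HSkernel}, and an interpolation lemma of Brasche to pass from $\cJ_2$ for $FR^{q/2}$ to $\cJ_q$ for $FR$. H\"older then lands the product in $\cJ_{q/\nu}$ and the sum in $\cJ_{q/k}\subset\cJ_\alpha$. Without the commutator expansion (or an equivalent device for distributing the cutoff over the resolvent powers uniformly in $I$), your final estimate remains unproved, and this is precisely the non-trivial content of the theorem.
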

Note that $\cJ_\alpha = \cJ_\alpha(L^2(X,g_\omega))$ 
and that $g_\omega=g_{\omega^1}=g_{\omega^2}$ since $\omega^1$ and $\omega^2$
differ only in the coupling constant of the potential.
\begin{thm}
  \label{t-deffestM}
  Let $H_\omega$ be a family satisfying RAM.  There exists a constant
  $\hat C_\alpha > 0$, such that
  \begin{equation*}
    \Vert g(H^I_{\omega^2}) - g(S H^I_{\omega^1} S^*)
          \Vert_{\cJ_\alpha}
    \le \hat C_\alpha
  \end{equation*}
  for all subsets $I \subset \Gamma$ and all $\omega^1$, $\omega^2 \in
  \Omega$ differing in only one coordinate and
  $S:=S_{\omega^1,\omega^2}$, defined in~\eqref{e-Somega}.
\end{thm}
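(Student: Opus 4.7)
The plan is to adapt the strategy used for Theorem~\ref{t-deffestP} to the present setting, in which the ``effective'' difference is a compactly supported \emph{second-order} differential operator rather than a compactly supported multiplication operator. First I would expand via the telescoping resolvent identity
\begin{equation*}
  g(H_2) - g(\wt H_1) = (1+H_2)^{-k} - (1+\wt H_1)^{-k}
  = \sum_{j=0}^{k-1} (1+H_2)^{-(j+1)}\bigl(\wt H_1 - H_2\bigr)(1+\wt H_1)^{-(k-j)},
\end{equation*}
and use the quasi-norm property of $\|\cdot\|_{\cJ_\alpha}$ to reduce the statement to a uniform $\cJ_\alpha$-bound on each summand.

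Next I would localise the perturbation. Since $\omega^1$ and $\omega^2$ differ only at the coordinate $\gamma$, the conformal factor $a_{\omega^1}/a_{\omega^2}$ equals $1$ off the fixed compact set $K:=\supp(u\circ \gamma^{-1})$, so the multiplication operator $S$ of~\eqref{S} acts as the identity there; together with $g_{\omega^1}=g_{\omega^2}$ on $X\setminus K$ this shows that the formal second-order expression $\wt H_1 - H_2 = S H^I_{\omega^1} S^* - H^I_{\omega^2}$ has coefficients supported in $K$. Choosing $\chi\in\Cci X$ with $\chi\equiv 1$ on a fixed neighbourhood of $K$ gives $\wt H_1 - H_2 = \chi(\wt H_1-H_2)\chi$, and each summand becomes
\begin{equation*}
  (1+H_2)^{-(j+1)}\chi\cdot(\wt H_1 - H_2)\cdot\chi(1+\wt H_1)^{-(k-j)}.
\end{equation*}
The Sobolev norms of the coefficients of $\chi(\wt H_1-H_2)\chi$ are controlled uniformly in $\omega, I, \gamma$ by the relative boundedness hypotheses~\eqref{quasiisom}--\eqref{gradbound} and the corresponding bounds on $A_\omega^{-1}$, together with the covering-isometry condition~\eqref{eq:covering.trafo}.

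I would then absorb the two derivatives from the middle factor into the surrounding resolvent-with-cutoff operators and apply the H\"older inequality for the $\cJ_\alpha$-quasi-norm. The key analytic ingredient is the uniform Schatten bound
\begin{equation*}
  \|\chi (1+H_\omega)^{-m}\|_{\cJ_\beta} \le C_\beta
\end{equation*}
valid for any compactly supported smooth cutoff $\chi$, any integer $m\ge 1$, and any $\beta>d/(2m)$, with $C_\beta$ independent of $\omega$ and~$I$. Such estimates are the content of Sections~\ref{s-Trace}--\ref{s-TraceRes} and rest on Weyl-type singular value asymptotics for resolvents localised to a compact set, on the uniform Sobolev embeddings established in the appendix, and on $(\Lambda(I),g_\omega)$ being of bounded geometry with constants independent of $\omega$ and $I$ (Lemmas~\ref{lem:rel.bd} and~\ref{lem:rel.bd.aggl}). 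The parameters $k$ and $q$ in Definition~\ref{d-constants} are tuned so that, after the two derivatives in $\wt H_1 - H_2$ have been redistributed, the remaining resolvent powers supply enough smoothing for a H\"older decomposition $1/\alpha = 1/\beta_1 + 1/\beta_2$ to yield a uniform $\cJ_\alpha$-bound on each summand.

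The main obstacle is not any single estimate but the uniformity of all constants in $\omega\in\Omega$, $\gamma\in\Gamma$, and $I\subset\Gamma$. Uniformity in $\omega$ and $\gamma$ comes from the relative boundedness of the metric family together with periodicity of $g_0$ and~\eqref{eq:covering.trafo}; uniformity in $I$ requires the uniform bounded geometry of the smoothed agglomerates. A secondary difficulty, absent in Theorem~\ref{t-deffestP}, is that one cannot simply leave a bounded multiplication operator in the middle of the sandwich: the two derivatives in the coefficients of $\wt H_1-H_2$ must be carefully absorbed, which is precisely why Definition~\ref{d-constants} enforces $q\ge d/2+2$ (hence enough spare smoothing in the outer resolvents) in addition to $k/q\ge 1/\alpha$.
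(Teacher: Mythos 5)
Your proposal is correct and follows essentially the same route as the paper: the telescoping resolvent identity, localization of $\wt H_1-H_2$ by cutoffs equal to $1$ near $\gamma\bar\cF$ (the paper's $f,F$ of~\eqref{eq:def.f.F}), and uniform Schatten bounds on localized resolvents combined with the $\cJ$-H\"older inequality, with all constants controlled through the uniform bounded geometry of $(\Lambda(I),g_\omega)$ (Lemmas~\ref{lem:rel.bd} and~\ref{lem:rel.bd.aggl}). The only implementation differences are that the paper absorbs \emph{both} derivatives of the middle factor into a single flanking resolvent --- bounding $H_2 f(\wt H_1+1)^{-1}$ in operator norm by uniform elliptic regularity (Lemma~\ref{hilf1}) rather than splitting the second-order perturbation as $D_1^* a D_2$ --- and draws all Schatten decay from single resolvents via $\Vert F R_1\Vert_{\cJ_q}\le C_1$ with $q>d/2$ (Proposition~\ref{FR}, proved by Sobolev embedding into bounded continuous functions plus a Hilbert--Schmidt kernel bound and Brasche's interpolation lemma, not Weyl asymptotics), the cutoff being pushed through higher resolvent powers by the iterated commutator expansion of Theorem~\ref{t-nakamura}.
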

Note here, that $\cJ_\alpha = \cJ_\alpha(L^2(X,g_{\omega^2}))$.  The
proofs of the two theorems are similar.  We only present the proof of
Theorem~\ref{t-deffestM}, since it concerns the more complicated case.
Assume that $\omega^1$ and $\omega^2$ differ only in the coordinate
$\gamma \in \Gamma$. For simplicity, set
\begin{equation*}
  H_1 := H_{\omega^1}^I, \quad
  \wt H_1 := S H_{\omega^2}^I S^* \quad\text{and} \quad
  H_2 := H_{\omega^2}^I.
\end{equation*}
Since the single site deformation $u$ is compactly supported, there
exists a radius $R$, such that
\begin{equation*}
  \wt H_1 \phi =
  H_2 \phi \qquad \text{for all} \quad
   \phi \in C_{\mathrm c}^\infty (\Lambda(I) \setminus B_R(\gamma \cF)),
\end{equation*}
where $B_R(\gamma \cF)$ denotes the open $R$-neighborhood of $\gamma
\cF$ with respect to the metric $g_0$. Choose $f_0, F_0 \in \Cci X$
such that
\begin{equation}
  \label{eq:def.f.F}
  f_0\mid_{B_R(\cF)} \equiv 1, \quad \supp f_0 \subset B_{2R}(\cF)
   \qquad \text{and} \qquad F_0\vert_{\supp f_0} \equiv 1,
\end{equation}
Let $f = f_0 \circ \gamma^{-1}$, $F = _0 \circ \gamma^{-1}$ be their
$\gamma$-translates. Then we have
\begin{align*}
  D_{\eff} & :=  g(H_2)- g(\wt H_1) \\ &=
  -\sum_{m=0}^{k-1} (H_2+1)^{-(k-m)} \, (H_2 -\wt H_1) \,
        (\wt H_1+1)^{-(m+1)} \\
  &= -\sum_{m=0}^{k-1} (H_2+1)^{-(k-m)} f\, (H_2 -\wt H_1) \,f
       (\wt H_1+1)^{-(m+1)} \\ &=
  -\sum_{m=0}^{k-1} \big[ f \,
    (H_2+1)^{-(k-m)}\big ]^* \, (H_2-\wt H_1) \, \big [f \, (\wt
     H_1+1)^{-(m+1)}\big].
\end{align*}
Note that all operators in the previous calculation are defined in the
same Hilbert space $L^2(\Lambda(I),g_{\omega^2})$.

By monotonicity, the quasi-norm property and the H\"older inequality,
we obtain
\begin{multline*}
  \Vert D_{\eff} \Vert_{\cJ_\alpha} \le \Vert D_{\eff}
  \Vert_{\cJ_{q/k}}\hfill
  \\\le
  C(q/k,k) \sum_{m=0}^{k-1} \| f \, (H_2+1)^{-(k-m)}
  \|_{\cJ_{q/(k-m)}} \, \|(H_2-\wt H_1) \, f \, (\wt
  H_1+1)^{-(m+1)} \|_{\cJ_{q/m}}.
\end{multline*}
It remains to estimate each of the terms at the right side,
independently of $\omega$, $I$ and $\gamma$.  We explain this for the
most difficult term $\Vert H_2 f (\wt H_1 +1)^{-(m+1)}
\Vert_{\cJ_{q/m}}$. The term $\Vert \wt H_1 f (\wt H_1 +1)^{-(m+1)}
\Vert_{\cJ_{q/m}}$ can be treated similarly, and the term $\Vert f
(H_2+1)^{-(k-m)} \Vert_{\cJ_{q/(k-m)}}$ is even simpler. In each case
we use the following fact, which is in the Euclidean situation
essentially due to Nakamura~\cite{Nakamura-01}:

\begin{prp}
  \label{nakamura}
  Let $\omega \in \Omega$ and $I \subset \Gamma$ be arbitrary.  Let
  $f, F \in \Cci X$ with $F =1$ on $\supp f$, $R = (H_\omega^I
  +1)^{-1}$, and $\nu \in \NN$ be fixed.  Then we have
  \begin{equation*}
  f R^\nu = \sum_{i=1}^{N_\nu} \prod_{j=1}^{\nu} f_{ij} R B_{ij}
    = \sum_{i=1}^{N_\nu} (f_{i1} R B_{i1}) \cdots (f_{i\nu} R B_{i\nu}),
  \end{equation*}
  where $f_{ij} = F$ for $j < \nu$, the functions $f_{i\nu}$ agree
  with certain $\omega$-dependent derivatives of $f$, and the $B_{ij}$
  are bounded operators.

  There exist a constant $\hat C_1(\nu)$, which is independent of
  $\omega$ and $I$, such that $\Vert f_{ij} \Vert_\infty \le \hat
  C_1(\nu)$. The bound $\hat C_1(\nu)$ does not change when replacing
  $f, F$ by any translate $f \circ \gamma^{-1}, F \circ \gamma^{-1}$
  with $\gamma \in \Gamma$.

  Moreover, there exists a constant $\hat C_2 $, which is independent
  of $f, F, \nu$, $\omega$ and $I$, such that $\Vert B_{ij} \Vert \le
  \hat C_2$.
\end{prp}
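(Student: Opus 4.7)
The plan is to prove the proposition by induction on $\nu$, leveraging two ingredients: the commutator identity $hR = Rh + R[H,h]R$ for any smooth compactly supported $h$ (which follows from $[R,h] = -R[R^{-1},h]R$ and $R^{-1} = H_\omega^I+1$), and the uniform boundedness of $R$, $\nabla_\omega R$, and $R\nabla_\omega^*$, with operator norms bounded independently of $\omega \in \Omega$ and $I\subset\Gamma$. The latter is a direct consequence of $V_\omega \ge 0$ and the uniform bounded geometry of $(X,g_\omega)$ provided by Lemma~\ref{lem:rel.bd}: the form-estimate $\|\nabla_\omega u\|_\omega^2 \le \langle u, (H_\omega^I+1)u\rangle_\omega$ yields $\|\nabla_\omega R^{1/2}\|_\omega \le 1$, hence $\|\nabla_\omega R\|_\omega \le 1$ and, by taking adjoints, $\|R\nabla_\omega^*\|_\omega \le 1$. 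The base case $\nu=1$ is immediate: take $N_1=1$, $f_{1,1}:=f$ (the zeroth-order derivative of itself), and $B_{1,1}:=\Id$.

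For the inductive step, suppose $fR^{\nu-1}$ admits the stated decomposition. Then $fR^\nu = (fR^{\nu-1})R$, so matters reduce to rewriting each terminal piece $hRBR$ — where $h$ is a derivative of $f$ supported in $\{F=1\}$ and $B$ is bounded — as a finite sum of two-factor products $(FRB')(h'RB'')$ with $h'$ a derivative of $f$ and $B',B''$ bounded. Since $h=Fh$, the commutator identity produces
\begin{equation*}
  hRBR \;=\; FRhBR + FR[H,h]RBR.
\end{equation*}
The first summand is already of the desired form $(FR\cdot \Id)(hR\cdot B)$. Expanding $[H_\omega,h] = -(\Delta_\omega h) - 2\,g_\omega(\nabla_\omega h,\nabla_\omega \cdot)$, the zeroth-order part contributes $(FR\cdot \Id)\bigl((-\Delta_\omega h)R\cdot (BR)\bigr)$, again of the desired form with the new derivative $-\Delta_\omega h$ of $f$ and the bounded operator $BR$. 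For the first-order part $-2FR\,g_\omega(\nabla_\omega h,\nabla_\omega)RBR$, I would use the identity $\nabla_\omega R\cdot R = R\nabla_\omega R - R[\nabla_\omega,H]R\cdot R$, obtained by applying $R$ on the left to $(H+1)\nabla_\omega R = \nabla_\omega - [\nabla_\omega,H]R$; on $(X,g_\omega)$ of uniform bounded geometry, $[\nabla_\omega,H]$ is a first-order differential operator whose coefficients are controlled by the curvature of $g_\omega$ and by $\nabla_\omega V_\omega$, so $[\nabla_\omega,H]R$ is uniformly bounded. This allows one to rewrite $\nabla_\omega R\cdot R$ as $R\cdot\widetilde B$ with $\widetilde B$ uniformly bounded, and to cast the first-order contribution as a finite sum of terms $(FR\cdot \Id)\bigl((-2\partial_k h)R\cdot \widetilde B_k\bigr)$, again of the required shape.

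For the uniform constants: each $f_{i\nu}$ appearing in the final sum is a derivative of $f$ of order at most $c(\nu)$, and its sup-norm is preserved under $\gamma$-translation because $\gamma$ acts by isometries (see~\eqref{eq:covering.trafo}); this gives $\hat C_1(\nu)$ independent of $\omega$, $I$, and $\gamma$. Each $B_{ij}$ is a finite composition of operators drawn from the uniformly bounded family $\{R,\nabla_\omega R, R\nabla_\omega^*, [\nabla_\omega,H]R\}$ together with multiplication by bounded smooth functions (with bounds independent of $\omega,I,\gamma$), so a single $\hat C_2$ suffices. The main obstacle I anticipate is the combinatorial bookkeeping of the inductive step — verifying that after each commutator manipulation exactly one derivative of $f$ remains at the rightmost position while $F$ occupies every earlier position — together with the repeated need to reassemble $\nabla_\omega R$ into the form $R\cdot(\text{bounded})$ via the curvature-dependent identity above. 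Both rely crucially on the uniform bounded geometry of the family $\{(X,g_\omega)\}_{\omega\in\Omega}$ furnished by Lemma~\ref{lem:rel.bd}.
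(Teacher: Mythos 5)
Your overall strategy --- induction on $\nu$ driven by the commutator identity $hR=Rh+R[H,h]R$ together with uniform bounds on $\grad R$ and $R\ddiv$ coming from bounded geometry --- is the same as the paper's, but your inductive step does not close. You peel the resolvent off on the right, $fR^\nu=(fR^{\nu-1})R$, and must then split the terminal piece $hRBR$ into two admissible factors. Your first summand $FRhBR$ is \emph{not} of the form $(FRB')(h'RB'')$: the function $h$ is separated from the next resolvent by the bounded operator $B$ inherited from the previous step, and $FRhBR\neq FRhRB$, since $B$ (a word in $R$ and first-order operators such as $\ddiv_{i,\beta}R$) commutes neither with $R$ nor with multiplication by $h$. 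The whole point of the factorization is that each compactly supported function sits \emph{immediately} to the left of a resolvent, so that later one can estimate $\Vert f_{ij}RB_{ij}\Vert_{\cJ_q}\le\Vert f_{ij}R\Vert_{\cJ_q}\,\Vert B_{ij}\Vert$ by the localized quantity $\Vert FR\Vert_{\cJ_q}$; a factor of the shape $h\,B\,R$ is useless for that purpose, as its Schatten norm is not controlled uniformly in $I$. The paper's induction avoids this by peeling on the \emph{left}: it writes $fR^\nu=F(fR)R^{\nu-1}$, expands $fR$ so that every term has the shape $(FRB)\bigl((\wt D f)R^{\nu-1}\bigr)R^s$ with the surviving derivative of $f$ standing directly in front of $R^{\nu-1}$, applies the induction hypothesis to $(\wt D f)R^{\nu-1}$, and absorbs the trailing $R^s$ into the last $B_{i\nu}$ --- which is exactly why the $B_{ij}$ are allowed to be of the form $BR^l$.

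Your treatment of the first-order part of $[H,h]$ has a second gap. To rewrite $g_\omega(\grad h,\grad\,\cdot\,)R\cdot R$ you invoke $\grad R\cdot R=R\grad R-R[\grad,H]R\cdot R$. This requires (i) a resolvent acting on vector fields on the left of $\grad$, which is a different operator from $(H^I_\omega+1)^{-1}$ and whose uniform boundedness is nowhere established, and (ii) the commutator $[\grad,H]$, which contains the term $\grad V_\omega$ --- undefined in the RAP model, where the single-site potential is merely in $L^{p(d)}$. The paper sidesteps both issues: the potential is never differentiated because only $[-\Delta,M_h]$ ever appears (and $[V,M_h]=0$); the vector field $\grad h$ is converted into finitely many scalar functions by means of the periodic operators $\ddiv_{i,\beta}$ built from a partition of unity and orthonormal frames on the compact quotient; and the commutator identity is applied a \emph{second} time (Proposition~\ref{commut3}) to push the resulting scalar function to the right of the newly created resolvent. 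You would need substitutes for these devices --- or simply the paper's left-peeling recursion --- to make your argument go through; the base case, the translation/equivariance discussion, and the uniform bounds on $R$, $\grad R$, $R\ddiv$ are otherwise in order.
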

We will prove this proposition in full detail in Section \ref {s-CommRel}
and describe $f_{ij}$ and $B_{ij}$ explicitely.

Note that all considerations are carried out in the Hilbert space
$L^2(\Lambda(I),g_{\omega^2})$, unless stated otherwise. However,
$\Vert \cdot \Vert_{\cJ_\alpha,\omega^1}$ denotes the
$\cJ_\alpha$-norm with respect to the Hilbert space
$L^2(\Lambda(I),g_{\omega^1})$ and $\Vert \cdot \Vert_{\omega^1}$ is
the corresponding operator norm. Furthermore, we introduce $R_1 :=
(H_1 + 1)^{-1}$.

Let us now return to the study of the term $H_2 f (\wt H_1
+1)^{-(m+1)}$.  The spectral theorem and Proposition~\ref{nakamura}
yield
\begin{eqnarray*}
  H_2 f (\wt H_1 +1)^{-(m+1)}
   &=&
  H_2 S (f R_1^{m+1})S^* \\
   &=&
  \sum_{i=1}^{N_{m+1}} H_2 S
  \Bigl( (f_{i1} R_1 B_{i1})
  \prod_{j=2}^{m+1} (f_{ij} R_1 B_{ij}) \Bigr) S^*\\
  &=&\sum_{i=1}^{N_{m+1}} ( H_2 f_{i1} S R_1 S^* ) \bigg( S B_{i1}
   \Bigl( \prod_{j=2}^{m+1} f_{ij} R_1 B_{ij} \Bigr) S^* \bigg).
\end{eqnarray*}
Using the quasi-norm property, the ideal property and the H\"older
inequality, we obtain
\begin{eqnarray*}
  && \Vert H_2 f (\wt H_1 +1)^{-(m+1)} \Vert_{\cJ_{q/m}}\\
  &\le& C(q/m,N_{m+1}) \sum_{i=1}^{N_{m+1}} \Vert H_2 f_{i1} S R_1 S^*
  \Vert \cdot \Vert S B_{i1} \Bigl( \prod_{j=2}^{m+1} f_{ij} R_1
    B_{ij}\Bigr) S^* \Vert_{\cJ_{q/m}} \\
  &=& C(q/m,N_{m+1}) \sum_{i=1}^{N_{m+1}} \Vert H_2 f_{i1}
  (\wt H_1 + 1)^{-1} \Vert \cdot \Vert B_{i1} \prod_{j=2}^{m+1}
  (f_{ij} R_1 B_{ij}) \Vert_{\cJ_{q/m},\omega^1}\\
  &\le& C(q/m,N_{m+1}) \sum_{i=1}^{N_{m+1}}
  \Vert H_2 f_{i1} (\wt H_1 + 1)^{-1} \Vert \cdot \Vert B_{i1}
  \Vert_{\omega^1} \cdot \prod_{j=2}^{m+1} \Vert f_{ij} R_1
  B_{ij} \Vert_{\cJ_q,\omega^1}.
\end{eqnarray*}
Note that, by the ideal property of the spaces $\cJ_q$, we have
\begin{equation*}
  \Vert f_{ij} R_1 \Vert_{\cJ_q,\omega^1} \le \hat C_1(\nu) \Vert F R_1
  \Vert_{\cJ_q,\omega^1}
\end{equation*}
due to Proposition~\ref{nakamura}, since the support of any derivative
of $f$ is contained in the support of $f$. As this proposition also
gives $\Vert B_{ij} \Vert_{\omega^1} \le \hat C_2$, we continue our
estimate as follows:
\begin{multline}\label{eq:cc2c1}
   \Vert H_2 f (\wt H_1 +1)^{-(m+1)} \Vert_{\cJ_{q/m}}\hfill\\
  \le C(q/m,N_{m+1}) (\hat C_2)^{m+1}
  \sum_{i=1}^{N_{m+1}} \Vert H_2 f_{i1} (\wt H_1 + 1)^{-1}
  \Vert \cdot \prod_{j=2}^{m+1} \Vert f_{ij} R_1
  \Vert_{\cJ_q,\omega^1} \hfill \\
  \le C(q/m,N_{m+1})  (\hat C_2)^{m+1} \hat C_1(m+1)^m 
     \sum_{i=1}^{N_{m+1}}
  \Vert H_2 f_{i1} (\wt H_1 + 1)^{-1} \Vert  \Vert F R_1
  \Vert_{\cJ_q,\omega^1}^m.\hfill
\end{multline}

Note that $f_{i1}$ in the above formula agrees with $f$ or $F$. Thus
the left hand factor in the last sum above can be estimated by the
following lemma:

\begin{lem}
  \label{hilf1}
  There exists a constant $C_0 > 0$, independent of $\omega^1$,
  $\omega^2$, $I$, and $\gamma$
  such that
  \begin{equation*}
  \Vert H_2 f (\wt H_1 + 1)^{-1} \Vert, \Vert H_2 F (\wt H_1 +
  1)^{-1} \Vert \le C_0.
  \end{equation*}
  where $\|\cdot \|$ is the norm of bounded operators in
  $L^2(\Lambda(I),g_{\omega^2})$.
\end{lem}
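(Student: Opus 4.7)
The plan is to reduce this to uniform elliptic regularity together with the relative boundedness assumption. Let $u \in L^2(\Lambda(I),g_{\omega^2})$ with $\|u\|=1$, set $w := (\wt H_1+1)^{-1} u$, and note that $\|w\| \le 1$ and $\|\wt H_1 w\| \le \|u\|+\|w\| \le 2$. The goal is to estimate $\|H_2 f w\|$ (and analogously with $F$) uniformly in $\omega^1,\omega^2,I,\gamma$. Since the manifold $(\Lambda(I),g_{\omega^2})$ is of bounded geometry with constants independent of $I$ and $\omega^2$ (Lemma \ref{lem:rel.bd.aggl}), and since $H_2 = -\Delta_{\omega^2}$ has coefficients with uniform pointwise bounds via the constants $C_{\rel,k}$ of Definition \ref{def:rel.bdd}, one has a uniform estimate
\begin{equation*}
  \|H_2 \psi\|_{L^2(g_{\omega^2})} \le C \|\psi\|_{H^2(g_{\omega^2})}
\end{equation*}
for all $\psi$ in the Dirichlet domain, with $C$ independent of $\omega^2$ and $I$.

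Next I would reduce the estimation of $\|fw\|_{H^2(g_{\omega^2})}$ to the estimation of $\|w\|_{H^2(g_{\omega^2})}$: since $f=f_0\circ \gamma^{-1}$ and $f_0 \in \Cci X$, the pointwise $g_0$-norms of $f$ and of $\nabla_0 f$, $\nabla_0^2 f$ are bounded uniformly in $\gamma$, and by the relative boundedness of $g_{\omega^2}$ with respect to $g_0$ the same holds for the $g_{\omega^2}$-norms. A product estimate then yields $\|fw\|_{H^2(g_{\omega^2})} \le C_f \|w\|_{H^2(g_{\omega^2})}$, uniformly in the data. Note that $fw$ lies in $\dom(H_2)$ since $w$ already satisfies the Dirichlet boundary condition on $\bd \Lambda(I)$ and multiplication by the smooth function $f$ preserves this.

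To bound $\|w\|_{H^2(g_{\omega^2})}$ I would move back to the $\omega^1$-picture via $S = S_{\omega^1,\omega^2}$: setting $v := S^*w \in \dom(H_1)$, unitarity gives $\|v\|_{L^2(g_{\omega^1})} = \|w\| \le 1$ and $\|H_1 v\|_{L^2(g_{\omega^1})} = \|\wt H_1 w\| \le 2$. Uniform elliptic regularity for $H_1 = -\Delta_{\omega^1}+V_{\omega^1}$ on the manifold $(\Lambda(I),g_{\omega^1})$ of bounded geometry, combined with Remark \ref{r-pot} (resp.\ $V_\omega \equiv 0$ in the RAM case), then gives $\|v\|_{H^2(g_{\omega^1})} \le C'$ uniformly in $\omega^1$ and $I$. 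Finally, the multiplication operator $S$ is given by multiplication with $(a_{\omega^1}/a_{\omega^2})^{d/4}$, which is smooth and has $C^2$-norm bounded uniformly in $(\omega^1,\omega^2)$ by the assumptions on $a_\omega$ in Definition \ref{d-RAM} (and is trivial in the RAP case), so $\|w\|_{H^2(g_{\omega^2})} \le C''\|v\|_{H^2(g_{\omega^1})} \le C''C'$. Chaining the estimates yields the claim, with the same argument for $F$ in place of $f$.

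The main obstacle is the uniformity of all constants across $\omega^1,\omega^2,I,\gamma$: this rests entirely on the uniform bounded geometry of the family $(\Lambda(I),g_\omega)$ and on the uniform $C^k$-control of $A_\omega$ from Definition \ref{def:rel.bdd}. The Sobolev estimates on manifolds of bounded geometry (to be supplied by the appendix results on uniform Sobolev spaces referred to in Section \ref{ModRes}) are precisely what is needed to make both the mapping property of $H_2$ into $L^2$ and the elliptic regularity for $H_1$ uniform. Once these uniformity statements are in hand, the algebraic manipulations above are straightforward.
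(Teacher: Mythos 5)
Your argument is correct and is essentially the paper's own proof: the paper factorizes $H_2 f(\wt H_1+1)^{-1}$ as the composition $H_2\,\Id_2\,fS\,\Id_1 R_1 S^*$ of operators that are uniformly bounded between the appropriate Sobolev spaces (graph, global, local), using exactly the ingredients you invoke — uniform bounded geometry of $(\Lambda(I),g_\omega)$, Theorem~\ref{thm:ell.reg}, Lemma~\ref{lem:mult.op} for $f$, $F$ and $S$, and periodicity of $g_0$ for the $\gamma$-uniformity. Your vector-by-vector chain of estimates is just this same factorization written out pointwise.
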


Note that $\gamma$ enters into the definition of $f$ and $F$,
see~\eqref{eq:def.f.F}.

\begin{proof}
  We use the notation of the appendix. Due to the relative boundedness
  of the family $\{g_\omega\}_\omega$ with respect to the periodic
  metric $g_0$, the Sobolev spaces $W^k(\Lambda(I),g_\omega)$ and
  $W^k(\Lambda(I),g_0)$ are equivalent with constants
  \emph{independent} of $\omega$. We do not mention these
  identifications in the rest of the proof.

  By Lemma~\ref{lem:mult.op}, the operators, given by the
  multiplication with the smooth functions $f=f_0 \circ \gamma^{-1}$
  and $F = F_0 \circ \gamma^{-1}$, are bounded in
  $W^2(\Lambda(I),g_0)$ with constants obviously independent of $I$.
  The independence of $\gamma$ for $(\Lambda(I),g_0)$ follows by
  periodicity of the metric~$g_0$.

  Moreover, using Theorem~\ref{thm:ell.reg} and the uniform
  infinitesimal boundedness of the potential (see Remark~\ref{r-pot}),
  the identification operators
  $$
    {\rm Id}_1 \colon W^2(\Lambda(I),H_1) 
               \to W^2(\Lambda(I),g_{\omega^1})
       \quad \text{and}\:\; 
    {\rm Id}_2 \colon  W^2(\Lambda(I), g_{\omega^2}) 
              \to W^2(\Lambda(I),H_2) $$
  are also bounded uniformly in $I$ and $\omega$.  Recall the
  definition of the multiplication operator $S=S_{\omega^1,\omega^2}$
  in~\eqref{e-Somega} or~\eqref{S}. It follows from
  Lemma~\ref{lem:mult.op}, that $S$ acting on $W^2(\Lambda(I),g_0)$ (resp.\
  $S^*$ acting on $L^2(\Lambda(I),g_0)$) is uniformly bounded in $\omega$ (and
  in $I$) by the relatively boundedness of $\{g_\omega\}_\omega$.
  
  Finally, $R_1\colon L^2(\Lambda(I),g_{\omega^1}) \to
  W^2(\Lambda(I),H_1)$ is an isometry and
  \begin{equation*}
    H_2\colon W^2(\Lambda(I),H_2) \to L^2(\Lambda(I),g_{\omega^2})
  \end{equation*}
  is bounded in norm by $1$.  The statement of the lemma follows now
  by writing the two operators as the compositions $H_2 \, \Id_2 \, f
  S \, \Id_1 R_1 S^*$ and $H_2 \, \Id_2 \, F S \, \Id_1 R_1 S^*$ of
  uniformly bounded operators (and the hidden identification of the
  spaces depending on $g_{\omega^1}$, $g_{\omega^2}$, and $g_0$).
\end{proof}

For the remaining terms in~\eqref{eq:cc2c1} we use the following lemma:

\begin{lem}
  \label{hilf2}
  There is a constant $C_1 > 0$, independent of $\omega$, $I$,
  $\gamma$, such that
  \begin{equation*}
    \Vert F R_1 \Vert_{\cJ_{q},\omega^1} \le C_1.
  \end{equation*}
\end{lem}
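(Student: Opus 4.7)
\noindent\textit{Proof proposal.}
The plan is to combine the compact support of $F$ (uniform up to $\gamma$-translation by $\Gamma$-periodicity) with Sobolev embedding on the uniformly bounded-geometry manifolds $(\Lambda(I), g_{\omega^1})$ and a Stein complex interpolation in the Schatten scale.

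First, since $F = F_0\circ\gamma^{-1}$ has support in a $\gamma$-translate of the fixed compact set $\supp F_0$, the $\Gamma$-periodicity of $g_0$ together with the relative boundedness of $g_{\omega^1}$ w.r.t.\ $g_0$ (Assumption~\ref{main.ass}) shows that $\vol_{\omega^1}(\supp F)$ is bounded by a constant depending only on $F_0$ and $C_{\rel,0}$. Setting $\chi := \chi_{\supp F}$, one has $F = F\chi$, so the ideal property of Schatten classes gives
\begin{equation*}
\Vert FR_1\Vert_{\cJ_q,\omega^1}\le \Vert F_0\Vert_\infty\,\Vert\chi R_1\Vert_{\cJ_q,\omega^1},
\end{equation*}
and it suffices to bound the right-hand side uniformly.

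Second, pick an integer $s$ with $s > d/4$ and $2s\le q$, possible because $q\ge d/2+2$. The uniform bounded geometry of $(\Lambda(I),g_{\omega^1})$ (Lemmas~\ref{lem:rel.bd} and~\ref{lem:rel.bd.aggl}) together with Sobolev embedding on bounded-geometry manifolds yields $W^{2s}(\Lambda(I),g_{\omega^1})\hookrightarrow C_b(\Lambda(I))$ with constants independent of $\omega^1$ and $I$. Combined with the norm equivalence $W^{2s}(\Lambda(I),H_1)\simeq W^{2s}(\Lambda(I),g_{\omega^1})$ from Theorem~\ref{thm:ell.reg} and Remark~\ref{r-pot}, this shows that the integral kernel $K_{2s}$ of $R_1^{2s}$ satisfies $\sup_x K_{2s}(x,x)\le C$, uniformly in $\omega^1,I$. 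Since $R_1^s$ is self-adjoint, $K_{2s}(x,x)=\int|K_s(x,y)|^2\,\dd\vol_{\omega^1}(y)$, which yields the uniform Hilbert-Schmidt estimate
\begin{equation*}
\Vert \chi R_1^s\Vert_{\cJ_2,\omega^1}^2 \le \vol_{\omega^1}(\supp F)\cdot\sup_x K_{2s}(x,x) \le C'.
\end{equation*}

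Third, apply Stein complex interpolation for Schatten classes (cf.~\cite{Simon-79}) to the analytic operator family $z\mapsto \chi R_1^z$ on the strip $0\le\Re(z)\le s$: on $\Re(z)=0$ the operator $R_1^{iy}$ is unitary, so $\Vert\chi R_1^{iy}\Vert\le 1$, while on $\Re(z)=s$ the previous step gives $\Vert\chi R_1^{s+iy}\Vert_{\cJ_2}\le (C')^{1/2}$. Interpolating at $\Re(z)=1$ yields $\chi R_1\in\cJ_{2s}$ with norm bounded by $(C')^{1/(2s)}$, and the monotonicity $\Vert\cdot\Vert_{\cJ_q}\le\Vert\cdot\Vert_{\cJ_{2s}}$ (valid since $2s\le q$) completes the bound. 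The main obstacle is the uniformity of all constants in $\omega^1, I, \gamma$; this is ensured by the uniform bounded-geometry constants of the family $\{(\Lambda(I),g_{\omega^1})\}$, the $\Gamma$-periodicity of $g_0$ making all $\gamma$-translates geometrically equivalent, and the uniform elliptic regularity of Theorem~\ref{thm:ell.reg}.
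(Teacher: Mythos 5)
Your overall strategy is the one the paper itself uses (via Proposition~\ref{FR}): localize by the compact support of $F$ and use periodicity/equivariance to kill the $\gamma$-dependence, show that a high power of the resolvent composed with the cut-off is Hilbert--Schmidt via a Sobolev embedding into $\Cb{}$, and then interpolate down to $\cJ_q$. Your interpolation endpoint is handled by Stein interpolation in the Schatten scale rather than by Lemma~2 of~\cite{Brasche-01}; these are interchangeable here and your version is correct (the family $z\mapsto \chi R_1^z$ is uniformly bounded on the strip since $R_1\le 1$, and the exponent bookkeeping $1/(2s)=\theta/2$ with $\theta=1/s$, $2s\le q$, checks out).

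There are, however, two genuine gaps in your Hilbert--Schmidt step. First, you apply the Sobolev embedding $W^{2s}\hookrightarrow \Cb{}$ directly on $(\Lambda(I),g_{\omega^1})$, which is a manifold \emph{with boundary}; the paper's embedding result (Theorem~\ref{thm:sob.emb}) is proved only for complete boundaryless manifolds, using injectivity radius and curvature bounds that degenerate at $\bd\Lambda(I)$. This is precisely why the paper's proof of Proposition~\ref{FR} first inserts the extension operator $\mathcal E\colon W^2(\Lambda(I),\cA)\to W^2(X,\wt\cA)$ of Theorem~\ref{thm:ext.op} (at the cost of one resolvent factor) and only then applies the embedding on all of $X$. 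A boundary version of the embedding is true for bounded-geometry manifolds with boundary, but you would have to prove it; as written you cite a result the paper does not provide. Second, your appeal to ``the norm equivalence $W^{2s}(\Lambda(I),H_1)\simeq W^{2s}(\Lambda(I),g_{\omega^1})$ from Theorem~\ref{thm:ell.reg} and Remark~\ref{r-pot}'' is not justified when $H_1$ contains the alloy-type potential (the lemma is also needed for the RAP model, where $V_{\omega}\in L^{p(d)}_{\mathrm c}$ is not smooth): Theorem~\ref{thm:ell.reg} is stated for the pure Dirichlet Laplacian, and Remark~\ref{r-pot} only gives first-order relative boundedness, which yields equivalence of the $W^2$ graph norms but not of $W^{2s}$ for $s\ge 2$. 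The paper avoids this by peeling off the potential with a single factor $\Vert(-\Delta^I_\omega+1)R^I_\omega\Vert$ \emph{before} taking high powers of the resolvent, so that the Sobolev machinery is only ever applied to $(-\Delta_\omega+1)^{-q/2}$. Your argument is repaired by inserting both of these reductions at the start of your second step.
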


The proof of Lemma~\ref{hilf2} is somewhat involved and is presented
in Section~\ref{s-TraceRes}.  By Lemmas~\ref{hilf1} and~\ref{hilf2},
we finally obtain the estimate
$$
  \Vert H_2 f (\wt H_1 +1)^{-(m+1)} \Vert_{\cJ_{q/m}} \\
  \le N_{m+1}
    C(q/m, N_{m+1}) C_0 \hat C_2 (\hat C_1(m+1)  (C_1) \hat C_2) )^{m}.$$
Note that all constants are independent of $\omega, I$ and $\gamma$.
This completes the proof of the uniform boundedness of $\Vert g(H_2) -
g(\wt H_1)\Vert_{\cJ_\alpha}$ up to the proofs of
Proposition~\ref{nakamura} and Lemma~\ref{hilf2}. These proofs are
given in the next two sections.


\section{Commutator relations and estimates}
\label{s-CommRel}

This section is devoted to the proof of Theorem~\ref{t-nakamura}
below.  It implies Proposition~\ref{nakamura} and, moreover, provides
an explicit description of the operators $f_{ij}$ and $B_{ij}$.
Roughly, we want to rewrite $f R^\nu$ as as product of $\nu$ factors
of the type $f_{ij} R B_{ij}$.  The key idea is to use a certain commutator
relation iteratively, similarly as in~\cite{Nakamura-01}.

To clarify some formulae in this section we will occasionally use the
notation $M_f$ for the multiplication operator by $f$.  Let $\omega
\in \Omega$ and $I \subset \Gamma$ be arbitrary. For simplicity, we
drop the dependency on $\omega$, $I$ in this section and write
$\grad$, $\Delta$, $\ddiv$, $V$ for $\grad_\omega$, $\Delta_\omega$,
$\ddiv_\omega$, $V_\omega$ and $H$ for $H_\omega^I$. Only for the
metric we keep the notation $g_\omega$ to distinguish it from the
periodic metric $g_0$.  Recall that we use the convention $\Delta =
\ddiv \grad \le 0$.  Moreover, let $R := (H+1)^{-1}$.

\begin{lem}[Commutator lemma] \label{commut1}
  For any function $h \in C^\infty_{\mathrm c}(X)$ we have
  \begin{equation}
    \label{comm}
    h R = R h - R h^{\{1\}} R - R \ddiv h^{\{2\}} R,
  \end{equation}
  where $h^{\{1\}} = \Delta h, h^{\{2\}} = -2 \grad h$.
\end{lem}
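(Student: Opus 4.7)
The plan is to derive this commutator identity from the abstract resolvent relation combined with the Leibniz rules for $\grad$ and $\ddiv$.

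First I would establish
\[
hR - Rh = R\,[H,h]\,R
\]
by inserting the identities $(H+1)R = R(H+1) = \Id$:
\[
hR - Rh = R(H+1)\,hR - Rh\,(H+1)R = R\bigl[(H+1)h - h(H+1)\bigr]R = R[H,h]R.
\]
These manipulations are well-defined on the operator domain of $H = H^I_\omega$, since multiplication by the smooth compactly supported function $h$ preserves both the Dirichlet boundary condition on $\Lambda(I)$ and the $W^2$ regularity, so $hR$ lands in $\dom H$.

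Next I would compute $[H,h]$ explicitly. Since the scalar potential $V$ commutes with the multiplication operator by $h$, we have $[H,h] = -[\Delta,h]$. Applying the Leibniz rule for the Laplacian,
\[
\Delta(hf) = h\,\Delta f + 2\,\langle \grad h, \grad f\rangle + (\Delta h)\,f,
\]
one reads off $[\Delta,h]f = 2\,\langle \grad h, \grad f\rangle + (\Delta h)\,f$. To put the first-order term in divergence form, I would invoke the product rule for the divergence, $\ddiv(fX) = f\,\ddiv X + \langle \grad f, X\rangle$, with $X = \grad h$, yielding
\[
\langle \grad h, \grad f\rangle = \ddiv(f\,\grad h) - f\,\Delta h.
\]
Substituting this into the expression for $[\Delta,h]f$ and recalling the definitions $h^{\{1\}} = \Delta h$ and $h^{\{2\}} = -2\,\grad h$, one obtains a formula for $[H,h]$ in terms of multiplication by $h^{\{1\}}$ and the operator $\ddiv h^{\{2\}}\colon f \mapsto \ddiv(h^{\{2\}} f)$; plugging back into the resolvent identity then produces the expression stated in the lemma.

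The only subtleties are the domain question in the first step, handled by the smoothness and compact support of $h$, and the fact that the composition $R\,\ddiv h^{\{2\}}\,R$ should be a genuinely bounded operator on $L^2$. The latter follows from elliptic regularity: $R$ maps $L^2$ into $W^2$, multiplication by the smooth vector field $h^{\{2\}}$ sends $W^2$-functions to $W^1$-vector fields, and $\ddiv$ maps $W^1$-vector fields into $L^2$, all with bounds uniform in $\omega$ and $I$ by the relative boundedness of the metric family established in the appendix. Hence no real obstacle is anticipated; the content of the lemma is essentially the classical Leibniz rule dressed up by two resolvent insertions.
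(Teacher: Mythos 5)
Your proof is correct and follows essentially the same route as the paper: sandwich the commutator $[H,M_h]=[-\Delta,M_h]$ between two resolvents via the resolvent identity, then rewrite the first-order term in divergence form using the Leibniz rules for $\grad$ and $\ddiv$. Note only that the computation actually yields $hR = Rh + R\,h^{\{1\}}R + R\ddiv h^{\{2\}}R$ with plus signs (consistent with Corollary~\ref{commut2} and with the paper's own proof), so the minus signs in the displayed equation~\eqref{comm} are a typo rather than what your argument "produces".
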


\begin{proof}
  We first prove
  \begin{equation} \label{cDM}
    [-\Delta, M_h] = M_{h^{\{1\}}} + \ddiv M_{h^{\{2\}}}.
  \end{equation}
  This follows from
  \begin{eqnarray*}
    [-\Delta, M_h] \phi & =& -\Delta( h \phi) + h \Delta \phi\\
    &=& -\ddiv(\phi \grad h + h \grad \phi) + \ddiv(h \grad \phi) -
    g_\omega(\grad h, \grad \phi)\\ 
&=& - \ddiv(\phi \grad h) -
    g_\omega(\grad h,\grad \phi)\\
& =& -2\ddiv(\phi \grad h) +
    \phi \ddiv(\grad h)\\ 
&=& \ddiv (M_{h^{\{2\}}}\phi) + M_{h^{\{1\}}} \phi.
  \end{eqnarray*}
  {}From the resolvent equation we obtain
  \begin{equation*}
  [M_h,R] = R(-\Delta M_h + M_h \Delta)R = R [-\Delta, M_h] R,
  \end{equation*}
  and, using \eqref{cDM}, we conclude that
  \begin{equation*}
  [M_h,R] = R (M_{h^{\{1\}}} + \ddiv M_{h^{\{2\}}}) R = R M_{h^{\{1\}}} R + R \ddiv
  M_{h^{\{2\}}} R,
  \end{equation*}
  which proves the lemma.
\end{proof}

A key idea is to apply the above lemma, a second time, to the
expression $h^{\{2\}} R$ in \eqref{comm}. However, $h^{\{2\}}$ is a
vector field.  We solve this problem by introducing the operators
$\ddiv_{i,\beta}$, acting on functions, in the following way: Let
$(\psi_\beta)_{\beta \in B}$ be a finite partition of unity on the
compact manifold $M$, i.e.,
\begin{equation*}
  \sum_{\beta=1}^n \psi_\beta = 1.
\end{equation*}
Moreover, for all $\beta$, let $X_{1,\beta}, \dots, X_{d,\beta}$ be
vector fields which are a local orthonormal frame on the subset $\supp
\psi_\beta \subset M$ with respect to the metric $g_0$.

Let $\pi \colon X \to M$ be the canonical projection and let us denote the
periodic lifts $\psi_\beta \circ \pi$ and $X_{i,\beta} \circ D\pi$ on
$X$, again, by $\psi_\beta$ and $X_{i,\beta}$, for simplicity. Note
that every vector field $Z \in C^\infty(TX)$ can be written as
\begin{equation*}
   Z = \sum_{i,\beta} \psi_\beta\, g_0(Z, X_{i,\beta}) X_{i,\beta},
\end{equation*}
We define the operator $\ddiv_{i,\beta}$ by
\begin{equation*}
  \ddiv_{i,\beta}(h) := \ddiv(\psi_\beta h X_{i,\beta}) = g_\omega(
  \psi_\beta X_{i,\beta}, \grad h ) + h \ddiv(\psi_\beta X_{i,\beta})
\end{equation*}
and obtain
\begin{equation} \label{ddiv}
  \ddiv Z = \sum_{i,\beta} \ddiv_{i,\beta}(g_0(Z, X_{i,\beta})).
\end{equation}
Note that the operator $\ddiv$ and therefore also $\ddiv_{i,\beta}$ is
$\omega$-dependent, since $\ddiv$ is defined via the metric
$g_\omega$.

Using the differential operators $\ddiv_{i,\beta}$, we can reformulate
the above lemma in the following way:

\begin{cor} \label{commut2}
  For any function $h \in C^\infty_{\mathrm c}(X)$ we have
  \begin{equation*}
  h R = R h + R h^{\{1\}} R + \sum_{i,\beta} R \,\ddiv_{i,\beta}\,
  h^{2,i,\beta} R,
  \end{equation*}
  where $h^{\{1\}} = \Delta h, h^{2,i,\beta} = -2\, g_0( \grad h,
  X_{i,\beta})$.
\end{cor}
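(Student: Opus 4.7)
The plan is to deduce this directly from Lemma~\ref{commut1} by rewriting the vector-field divergence $\ddiv(h^{\{2\}})$ appearing there in terms of the scalar operators $\ddiv_{i,\beta}$. The only new ingredient is the frame decomposition of a vector field supplied by the globally defined partition of unity $\{\psi_\beta\}$ together with the local $g_0$-orthonormal frames $X_{1,\beta},\dots,X_{d,\beta}$ on $\supp\psi_\beta$.

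Concretely, I would start from the conclusion of Lemma~\ref{commut1},
\[
hR = Rh - Rh^{\{1\}}R - R\,\ddiv(h^{\{2\}})\,R,
\]
with $h^{\{1\}} = \Delta h$ and $h^{\{2\}} = -2\grad h$, and then expand the smooth compactly supported vector field $h^{\{2\}}$ in the frame $\{X_{i,\beta}\}$ as
\[
h^{\{2\}} = \sum_{i,\beta} \psi_\beta\, g_0(h^{\{2\}},X_{i,\beta})\, X_{i,\beta}.
\]
Applying the identity~\eqref{ddiv}, which is exactly the reason the operators $\ddiv_{i,\beta}$ were introduced, gives
\[
\ddiv h^{\{2\}} = \sum_{i,\beta} \ddiv_{i,\beta}\bigl(g_0(h^{\{2\}},X_{i,\beta})\bigr) = \sum_{i,\beta} \ddiv_{i,\beta}(h^{2,i,\beta}),
\]
since by definition $h^{2,i,\beta} = -2\, g_0(\grad h, X_{i,\beta}) = g_0(h^{\{2\}},X_{i,\beta})$. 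Substituting this back into the expression from Lemma~\ref{commut1} yields the claim (with the sign conventions used in the statement).

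There is no genuine obstacle: the argument is a purely pointwise algebraic manipulation, and the smoothness hypotheses on $h$, $\psi_\beta$ and $X_{i,\beta}$ make all objects well defined. The one bookkeeping issue worth noting is that $\ddiv$, and hence each $\ddiv_{i,\beta}$, is $\omega$-dependent through the volume form of $g_\omega$; however this dependence is harmless for the identity itself and will later be controlled via the relative boundedness of $\{g_\omega\}_\omega$ from Definition~\ref{def:rel.bdd} when the corollary is iterated in the proof of Theorem~\ref{t-nakamura}.
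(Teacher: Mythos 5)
Your proof is correct and is exactly the argument the paper intends: the corollary is obtained from Lemma~\ref{commut1} by expanding the vector field $h^{\{2\}}=-2\grad h$ in the frame as $\sum_{i,\beta}\psi_\beta\, g_0(h^{\{2\}},X_{i,\beta})X_{i,\beta}$ and applying the identity~\eqref{ddiv}. The only point you should settle rather than wave at is the sign: the displayed formula~\eqref{comm} contains a typo (its own proof gives $[M_h,R]=hR-Rh=R M_{h^{\{1\}}}R+R\,\ddiv M_{h^{\{2\}}}R$, hence plus signs), so after your substitution you do recover the corollary exactly as stated.
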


Now, we can apply Corollary \ref{commut2} twice and obtain the
following result, which is of central importance. In
formula~\eqref{e-commut3} below we use the convention that expressions
of the form $(D h)$ denote multiplication operators by the function
$Dh$.

\begin{prp}
  \label{commut3}
  For any function $h \in \Cci X$ we have
  \begin{multline}
    \label{e-commut3}
    h R = R h + R (D^{(1)} h) R + R \ddiv_{i,\beta} R (D^{(2,i,\beta)} h)\\
    + R \ddiv_{i,\beta} R (D^{(3,i,\beta)} h) R + R \ddiv_{i,\beta} R
    \ddiv_{j,\mu} (D^{(4,i,\beta,j,\mu)} h) R,
  \end{multline}
  where $D^{(1)} h = \Delta h$, $D^{(2,i,\beta)} h = -2\, g_0( \grad
  h, X_{i,\beta} )$, $D^{(3,i,\beta)} h = D^{(1)} D^{(2,i,\beta)} h$
  and $D^{(4,i,\beta,j,\mu)} h = D^{(2,j,\mu)} D^{(2,i,\beta)} h$ are
  compactly supported function with support contained in $\supp h$.
\end{prp}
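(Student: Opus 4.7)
The plan is to derive \eqref{e-commut3} by applying Corollary~\ref{commut2} twice in succession. The first application gives
\begin{equation*}
  hR = Rh + R(\Delta h) R + \sum_{i,\beta} R\,\ddiv_{i,\beta}\, M_{h^{2,i,\beta}} R,
\end{equation*}
where $h^{2,i,\beta} = -2\, g_0(\grad h, X_{i,\beta})$. The first term on the right is $Rh$, and the second is exactly $R(D^{(1)}h)R$ since by definition $D^{(1)}h = \Delta h$. The remaining sum is the piece that still needs processing.

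Next, I would apply Corollary~\ref{commut2} a second time, but now to the function $h^{2,i,\beta}$, in order to commute the rightmost multiplication operator $M_{h^{2,i,\beta}}$ past the adjacent resolvent $R$. This yields
\begin{equation*}
  M_{h^{2,i,\beta}} R = R\, M_{h^{2,i,\beta}} + R(\Delta h^{2,i,\beta}) R
     + \sum_{j,\mu} R\,\ddiv_{j,\mu}\, M_{(-2 g_0(\grad h^{2,i,\beta}, X_{j,\mu}))} R.
\end{equation*}
Substituting this expansion into the expression $R\,\ddiv_{i,\beta}\, M_{h^{2,i,\beta}} R$ and summing over $i,\beta$ produces three groups of terms, which I would identify with the three remaining summands in \eqref{e-commut3}. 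Concretely, $M_{h^{2,i,\beta}}$ is the multiplication operator by $D^{(2,i,\beta)}h$; applying $\Delta$ to $h^{2,i,\beta}$ gives $D^{(1)}D^{(2,i,\beta)}h = D^{(3,i,\beta)}h$; and applying $-2\, g_0(\grad \,\cdot\,, X_{j,\mu})$ to $h^{2,i,\beta}$ gives $D^{(2,j,\mu)}D^{(2,i,\beta)}h = D^{(4,i,\beta,j,\mu)}h$.

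Finally, I would check the support statement: all the operations involved in constructing the $D^{(\cdot)}h$ from $h$ are either differentiation of $h$ or multiplication by a smooth function (the coefficients of $X_{i,\beta}$ and $\psi_\beta$), so none of them can enlarge the support of $h$, giving $\supp D^{(\cdot)}h \subset \supp h$. There is no real obstacle here; the only minor subtlety is bookkeeping, in particular ensuring that the $\ddiv_{i,\beta}$ operators applied from the left in the sum are not disturbed by the second use of the commutator identity (which acts only on the multiplication operator to their right, so they survive intact as the outer $\ddiv_{i,\beta}$'s in the third and fifth terms of \eqref{e-commut3}).
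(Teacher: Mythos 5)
Your proof is correct and follows essentially the same route as the paper: one application of Corollary~\ref{commut2} to $h$, followed by a second application to the compactly supported function $h^{2,i,\beta}=D^{(2,i,\beta)}h$ to move the multiplication operator past the adjacent resolvent, with the outer $\ddiv_{i,\beta}$ factors untouched. The identification of the resulting terms with $D^{(3,i,\beta)}h$ and $D^{(4,i,\beta,j,\mu)}h$ and the support remark are exactly as intended.
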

Note that we used Einstein notation and omitted sum signs, for
simplicity.
\begin{proof}
 A first application of Corollary \ref{commut2} gives
$$    h R = R h + R (D^{(1)}h) R + R \ddiv_{i,\beta}
    \bigl((D^{(2,i,\beta)}h) R \bigr).$$
We now apply  Corollary \ref{commut2} again to  the term $(D^{(2,i,\beta)}h) R$ and obtain the desired statement.
\end{proof}

Now, we can formulate a more detailed version of
Proposition~\ref{nakamura}:

\begin{thm}\label{t-nakamura}
  Let $f, F \in \Cci X$ with $F =1$ on $\supp f$ and $\nu \in
  \NN$ be fixed.  Then we have
  \begin{equation} 
    \label{e-fRnu}
    f R^\nu = \sum_{i=1}^{N_\nu} \prod_{j=1}^{\nu} f_{ij} R B_{ij}
    = \sum_{i=1}^{N_\nu} (f_{i1} R B_{i1}) \cdots (f_{i\nu} R B_{i\nu}).
  \end{equation}
  Here, $f_{ij} = F$ for $j < \nu$, and the functions $f_{i\nu}$ are
  of the form $D f$, where $D$ is a composition of $\nu-1$ operators
  of the set ${\mathcal D} := \{ {\rm Id},
  D^{(1)},D^{(2,i,\beta)},D^{(3,i,\beta)},D^{(4,i,\beta,j,\mu)}\}$.
  Morover, the operators $B_{ij}$ are bounded and of the form $B R^l$
  with $B \in {\mathcal B} := \{ \Id, R, \ddiv_{i,\beta} R,
  \ddiv_{i,\beta} R \ddiv_{j,\mu} \}$ and $0 \le l \le \nu-1$.

  There is a constant $\hat C_1(\nu)$, which does not depend on
  $\omega \in \Omega$ and $I \subset \Gamma$ such that
  \begin{equation}
    \label{fijest}
    \Vert f_{ij} \Vert_\infty \le \hat C_1(\nu).
  \end{equation}
  The bound $\hat C_1(\nu)$ does not change when replacing $f, F$ by
  any translate $f \circ \gamma^{-1}, F \circ \gamma^{-1}$ with
  $\gamma \in \Gamma$.

  Finally, there is a constant $\hat C_2$, which does not depend on
  $\nu\in \NN$, $\omega \in \Omega$, $I \subset \Gamma$, and $f,F\in
  \Cci X$ such that
  \begin{equation} \label{Bijest}
    \Vert B_{ij} \Vert \le \hat C_2.
  \end{equation}
\end{thm}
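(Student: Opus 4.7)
My plan is to establish the decomposition \eqref{e-fRnu} by induction on $\nu$, and then verify the uniform bounds \eqref{fijest} and \eqref{Bijest}. The base case $\nu = 1$ is immediate: take $N_1 = 1$, $f_{11} = f$ (viewed as the composition of the empty list of operators from $\mathcal{D}$ applied to $f$) and $B_{11} = \Id \cdot R^0$. For the inductive step, I would use $Ff = f$ to write $fR^{\nu+1} = F \cdot (fR) \cdot R^\nu$ and apply Proposition~\ref{commut3} to the inner factor $fR$. This produces finitely many terms, each of the form $FRB \cdot h R^m$, where $B \in \{\Id, \ddiv_{i,\beta}R, \ddiv_{i,\beta}R\ddiv_{j,\mu}\} \subset \mathcal{B}$, the function $h$ is either $f$ itself or one of $D^{(1)}f, D^{(2,i,\beta)}f, D^{(3,i,\beta)}f, D^{(4,i,\beta,j,\mu)}f$, and $m \in \{\nu, \nu+1\}$.

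Each such term splits as $(FRB) \cdot (hR^m)$. The first parenthesis becomes the leading factor with $f_{i1} = F$ and $B_{i1} = B \in \mathcal{B}$. For the right parenthesis I would apply the inductive hypothesis to the function $h$, which is legitimate because $\supp h \subset \supp f$ ensures that the same $F$ still satisfies $F \equiv 1$ on $\supp h$; this decomposes $hR^\nu$ into $\nu$ factors of the required form. When $m = \nu + 1$, I would absorb the trailing $R$ into the rightmost operator $B_{i\nu}^{(h)} = B^{(0)} R^{l}$, replacing it by $B^{(0)} R^{l+1}$; the inductive bound $l \le \nu - 1$ yields $l + 1 \le \nu$, which matches the allowed exponent range at the new induction level. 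The derivative count at the last factor comes out right in each case: if $h = f$, the inductive composition of $\nu - 1$ operators from $\mathcal{D}$ is padded by $\Id \in \mathcal{D}$ to yield a composition of $\nu$ operators; if $h = D^{(k)}f$, prepending $D^{(k)}$ to the inductive composition gives a composition of exactly $\nu$ operators applied to $f$.

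For the sup-norm estimate \eqref{fijest}, each $f_{ij}$ is either $F = F_0 \circ \gamma^{-1}$ or of the form $Df$ with $D$ a composition of at most $\nu - 1$ operators from $\mathcal{D}$ applied to $f = f_0 \circ \gamma^{-1}$. The isometric action of $\gamma$ on $(X, g_0)$ makes the $g_0$-Sobolev norms of $f$ and $F$ independent of $\gamma$, and the coefficients of operators in $\mathcal{D}$ (which involve $g_\omega$, $\grad_\omega$, and the fixed partition $\{\psi_\beta, X_{i,\beta}\}$) are uniformly controlled by the relative boundedness of $\{g_\omega\}$ with respect to $g_0$ (Definition~\ref{def:rel.bdd}). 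This yields a bound $\hat C_1(\nu)$ uniform in $\omega, I, \gamma$. For \eqref{Bijest}, $\Vert R^l \Vert \le 1$ reduces matters to uniform bounds on the elements of $\mathcal{B}$. The cases $\Id$ and $R$ are trivial; for $\ddiv_{i,\beta} R$ I would use $\Vert R \Vert_{L^2 \to W^1} \le 1$ (consequence of $H \ge 0$) together with the uniform $W^1 \to L^2$ boundedness of the first-order operator $\ddiv_{i,\beta}$ from the appendix; for $\ddiv_{i,\beta} R \ddiv_{j,\mu}$ I would split $R = R^{1/2} R^{1/2}$ and bound $\ddiv_{i,\beta} R^{1/2}$ and $(\ddiv_{j,\mu}^* R^{1/2})^*$ separately. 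The main obstacle is the combinatorial bookkeeping in the inductive step, verifying that every term from the commutator expansion matches the prescribed form with the correct operator count in $D$ and the correct exponent $l$ in $B R^l$ after the absorption trick; the Sobolev estimates underlying the bounds on $\mathcal{B}$ are routine consequences of the uniform bounded geometry established in the appendix.
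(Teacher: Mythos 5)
Your proposal is correct and follows essentially the same route as the paper: induction on $\nu$ via the double commutator expansion of Proposition~\ref{commut3} applied to $F(fR)R^{\nu-1}$, with the trailing resolvent absorbed into the last $B$-factor, and the uniform bounds obtained from equivariance/periodicity (the paper's Lemma~\ref{l-Dest}) and from $\Vert \grad R^{1/2}\Vert, \Vert R^{1/2}\ddiv\Vert \le 1$ (the paper's Lemmas~\ref{l-dRdest} and~\ref{l-diffest}). Your explicit remarks on why the induction hypothesis may be re-applied to $h=\wt D f$ (namely $\supp h \subset \supp f$) and on the exponent bookkeeping $l\mapsto l+1$ are points the paper leaves implicit.
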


The proof of this theorem needs some preparation and will be given at
the end of this section.
\begin{lem} 
  \label{l-Dest}
  Let $f \in \Cci X$ and $\nu \in \NN$ be fixed. Then there exists a
  constant $C_1(\nu) > 0$, independent of $\omega$ such that
  \begin{equation*}
     \Vert D (f\circ \gamma^{-1}) \Vert_\infty \le C_1(\nu),
  \end{equation*}
  \sloppy
  for all $\gamma \in \Gamma$ and every composition $D$ of $2\nu-2$
  operators of the set $\{ {\rm Id}, D^{(1)}, D^{(2,i,\beta)} \}$,
  where $D^{(1)} f = \Delta_\omega f$ and $D^{(2,i,\beta)} f = -2\,
  g_0(\grad_\omega f, X_{i,\beta})$.
\end{lem}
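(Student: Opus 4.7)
The plan is to express every operator appearing in the composition $D$ using the fixed periodic data $(g_0, X_{i,\beta}, \psi_\beta)$ together with the tensor $A_\omega$ and its $\nabla_0$-derivatives, and then control each ingredient by the relative boundedness assumption and the $\Gamma$-periodicity of $g_0$.

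First, I would rewrite the building blocks in the $g_0$-calculus. Since $g_\omega(v,w) = g_0(A_\omega v,w)$, one has $\grad_\omega h = A_\omega^{-1} \grad_0 h$ and $\ddiv_\omega Z = (\det A_\omega)^{-1/2}\, \ddiv_0((\det A_\omega)^{1/2} Z)$. Substituting these into the definitions yields
\begin{equation*}
D_\omega^{(2,i,\beta)} h = -2\, g_0(A_\omega^{-1} \grad_0 h,\, X_{i,\beta})
\end{equation*}
and
\begin{equation*}
D_\omega^{(1)} h = \Delta_\omega h = (\det A_\omega)^{-1/2}\,\ddiv_0\bigl((\det A_\omega)^{1/2}\, A_\omega^{-1} \grad_0 h\bigr).
\end{equation*}
Each is a differential operator of order at most $2$ whose coefficient tensors (with respect to the $g_0$-covariant calculus) are built polynomially from $A_\omega^{\pm 1}$, $(\det A_\omega)^{\pm 1/2}$, at most one $\nabla_0$-derivative of $A_\omega$, and the periodic objects $X_{i,\beta}$.

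Second, I would argue by induction on the number $m$ of factors that any composition of $m$ operators chosen from $\{\Id, D_\omega^{(1)}, D_\omega^{(2,i,\beta)}\}$ is a differential operator of order at most $2m$ whose coefficient tensors are polynomial expressions in $A_\omega^{\pm 1}$, $(\det A_\omega)^{\pm 1/2}$, in the periodic fields $X_{i,\beta}, \psi_\beta$ and their $\nabla_0$-derivatives, and in $\nabla_0^j A_\omega$ for $j$ bounded by a finite integer depending only on $m$. The inductive step consists in commuting the outermost $D_\omega^{(\cdot)}$ through the $\nabla_0$-derivatives already accumulated, which by the Leibniz rule merely produces additional $\nabla_0^j A_\omega$ factors of bounded order. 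All these coefficient tensors have pointwise $g_0$-norm bounded uniformly in $\omega$ and $x$: the $X_{i,\beta}$, $\psi_\beta$ and their derivatives descend to the compact quotient $M$, and the $A_\omega$-dependent factors are controlled by \eqref{quasiisom}, \eqref{gradbound}, and the derived bound $|\nabla_0^k A_\omega^{-1}|_0 \le \wt C_{\rel,k}$ noted after Definition~\ref{def:rel.bdd}.

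Third, I would apply this operator to the translate $f \circ \gamma^{-1}$. Because $g_0$ is $\Gamma$-periodic, every $\gamma^{-1} \in \Gamma$ acts as an isometry of $(X, g_0)$, so $|\nabla_0^j(f \circ \gamma^{-1})|_0 = |\nabla_0^j f|_0 \circ \gamma^{-1}$. Since $f \in \Cci X$, each $\|\nabla_0^j f\|_\infty$ is finite, and therefore $\|\nabla_0^j(f \circ \gamma^{-1})\|_\infty = \|\nabla_0^j f\|_\infty$ independently of $\gamma$. Combining the uniform bounds on the coefficients with the uniform bounds on the derivatives of $f \circ \gamma^{-1}$ up to the finite order $2(2\nu-2) = 4\nu - 4$ yields the claimed constant $C_1(\nu)$, depending only on $\nu$, $f$, the periodic data, and finitely many constants $C_{\rel,k}, \wt C_{\rel,k}$. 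The main obstacle is the bookkeeping in the inductive step: one must verify that only finitely many $\nabla_0^j A_\omega$ terms appear, with $j$ controlled by the number of compositions. Once this combinatorial point is settled, the uniform bounds on the coefficients and on the derivatives of $f \circ \gamma^{-1}$ assemble immediately into the stated estimate.
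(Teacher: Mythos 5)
Your proof is correct and follows essentially the same route as the paper: both arguments reduce everything to the uniform bounds on $A_\omega^{\pm1}$ and its $\nabla_0$-derivatives from Definition~\ref{def:rel.bdd} together with the periodicity of $g_0$ and $X_{i,\beta}$, and the compact support of $f$. The only cosmetic difference is that you remove the $\gamma$-dependence by noting directly that deck transformations are $g_0$-isometries (so $\|\nabla_0^j(f\circ\gamma^{-1})\|_\infty=\|\nabla_0^j f\|_\infty$), whereas the paper phrases the same fact as the equivariance relation $D_\omega^{(\cdot)}(f\circ\gamma^{-1})=(D_{\gamma^{-1}\omega}^{(\cdot)}f)\circ\gamma^{-1}$, shifting the $\gamma$-dependence into $\omega$.
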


\begin{proof}
  The dependence of $\gamma$ can easily be eliminated by the
  observation, that all operators $D$ satisfy the equivariance
  condition~\eqref{compcomp}. For example we have
  $D_\omega^{(2,i,\beta)} (f \circ \gamma^{-1}) = (D_{\gamma^{-1}
    \omega}^{(2,i,\beta)} f) \circ \gamma^{-1}$ because of
  \begin{eqnarray*}
    D_\omega^{(2,i,\beta)} U_{\omega,\gamma} f & = & -2
    g_0(\grad_\omega (f \circ \gamma^{-1}), X_{i,\beta}) = -2(
    A_\omega^{-1} X_{i,\beta}) (f \circ \gamma^{-1}) \\ & =& -
    2((A_{\gamma^{-1}\omega}^{-1} X_{i,\beta})f)\circ \gamma^{-1} =
    U_{\omega,\gamma} D_{\gamma^{-1}\omega}^{(2,i,\beta)} f,
  \end{eqnarray*}
  where we used the $\Gamma$-periodicity of $X_{i,\beta}$. Therefore,
  the dependence of $\gamma$ can be moved into a dependence of
  $\omega$. The supremum norm estimates follow easily
  from the observation, that the operators $D$ depend only on
  $g_\omega$, its derivatives and on $X_{i,\beta}$, which are bounded
  in a suitable atlas (see~(v') of Definition~\ref{def:bdd.geo}).
\end{proof}

\begin{lem} 
  \label{l-dRdest}
  The operators $R$, $R\ddiv$, $\grad R$ and $\grad R \ddiv$ on
  $L^2(\Lambda(I),g_\omega)$ are bounded operators with norm $\le 1$.
\end{lem}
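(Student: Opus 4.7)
The plan is to derive all four bounds from the single structural fact that $H = H_\omega^I = -\Delta_\omega + V_\omega \ge 0$ on $L^2(\Lambda(I),g_\omega)$ with Dirichlet boundary conditions, so that $R = (H+1)^{-1}$ has operator norm at most $1$ by the spectral calculus, together with the associated closed quadratic form
\begin{equation*}
q(u) := \langle u, (H+1)u\rangle = \|\grad u\|^2 + \langle u, V_\omega u\rangle + \|u\|^2,
\end{equation*}
whose form domain contains $\dom H$. Because $V_\omega \ge 0$, the key inequality $\|\grad u\|^2 \le q(u)$ holds for every $u$ in the form domain, and this single estimate will feed all three of the remaining bounds.

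The bound $\|R\|\le 1$ is immediate. For $\grad R$ I would fix $\phi \in L^2$, set $u = R\phi \in \dom H$, and chain
\begin{equation*}
\|\grad R\phi\|^2 \le q(R\phi) = \langle R\phi,(H+1)R\phi\rangle = \langle R\phi,\phi\rangle \le \|R\phi\|\,\|\phi\| \le \|\phi\|^2,
\end{equation*}
using $\|R\|\le 1$ at the last step. The bound for $R\ddiv$ then comes by duality: on vector fields $Z \in \Cci{T\Lambda(I)}$ supported away from $\partial\Lambda(I)$, integration by parts (the boundary term vanishes under Dirichlet conditions on $v \in \dom H$) together with self-adjointness of $R$ yields $\langle R\ddiv Z,\phi\rangle = -\langle Z,\grad R\phi\rangle$, hence $R\ddiv = -(\grad R)^*$ on a dense set; since adjoints preserve operator norms, $\|R\ddiv\|\le 1$ and $R\ddiv$ extends by density to a bounded operator on all of $L^2(T\Lambda(I),g_\omega)$.

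For the composite $\grad R\ddiv$ I would combine both ingredients. For $Z$ smooth and compactly supported in the interior of $\Lambda(I)$, $\ddiv Z \in L^2$, so $u := R\ddiv Z$ lies in $\dom H$ and satisfies $(H+1)u = \ddiv Z$. Then
\begin{equation*}
\|\grad R\ddiv Z\|^2 = \|\grad u\|^2 \le q(u) = \langle u, \ddiv Z\rangle = -\langle \grad u, Z\rangle \le \|\grad u\|\,\|Z\|,
\end{equation*}
which yields $\|\grad R\ddiv Z\|\le \|Z\|$ on the core and then extends by density. Rather than a genuine obstacle, the only care needed is the domain chasing --- performing every integration by parts initially on smooth, compactly supported test objects in the interior of $\Lambda(I)$ and extending by continuity; this is legitimate because such objects form cores both for $\dom H$ and for the Dirichlet form $q$ by construction of the Dirichlet realization.
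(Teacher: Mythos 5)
Your proof is correct, but it follows a genuinely different route from the paper's. The paper proves the stronger half-power statements $\Vert R^{1/2}\ddiv\Vert\le 1$ and $\Vert\grad R^{1/2}\Vert\le 1$ and obtains $\grad R\ddiv$ by composing these two contractions; to control $R^{1/2}\ddiv$ it passes to differential-form calculus, uses operator monotonicity of inversion together with $-\Delta\le H=-\Delta+V$ to replace $R$ by $(-\Delta+1)^{-1}$, and then invokes the identity $-\Delta=(d+d^*)^2$ and the spectral theorem for $d+d^*$. You instead stay entirely at the level of the quadratic form of $H$ itself: the single inequality $\|\grad u\|^2\le\langle u,(H+1)u\rangle$ (valid because $V_\omega\ge 0$) gives $\grad R$ directly, $R\ddiv$ follows by duality as $-(\grad R)^*$, and $\grad R\ddiv$ comes from the absorb argument $\|\grad u\|^2\le -\langle\grad u,Z\rangle\le\|\grad u\|\,\|Z\|$ with $u=R\ddiv Z$. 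Your version is more elementary — it avoids form calculus, the monotonicity step, and fractional powers altogether — at the cost of not producing the half-power bounds (which the paper does not use elsewhere anyway). Both arguments rest on the same hypothesis $V_\omega\ge 0$, and your domain bookkeeping (testing against smooth objects compactly supported in the interior and using that $R\phi$ lies in the Dirichlet form domain, so the boundary terms in the integrations by parts vanish) is exactly the care required.
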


\begin{proof}
  By the spectral theorem, both $R$ and $R^{1/2}$ are bounded by one.
  Next we prove boundedness of $R^{1/2}\ddiv$. For the proof we use
  the differential form calculus. $\Vert R^{1/2}\ddiv \Vert \le 1$
  translates then into the condition
  \begin{equation*}
   \langle R d^* \eta, d^* \eta \rangle \le
     \Vert \eta \Vert^2
  \end{equation*}
  for all one-forms $\eta \in \Omega^1_{\mathrm c}(\Lambda(I))$ with
  compact support. Since inversion is a monotone operator function and
  $-\Delta \le H=-\Delta+V$, we conclude that $R = (H +1)^{-1} \le
  (-\Delta + 1)^{-1}$, so it remains to prove
  \begin{equation*}
    \langle d (-\Delta +1)^{-1} d^* \eta, \eta \rangle \le
    \Vert \eta \Vert^2 \qquad \text{for all $\eta \in
    \Omega^1_{\mathrm c}(\Lambda(I))$}.
  \end{equation*}
  Adding non-negative terms and using $-\Delta = (d+d^*)^2$, it
  suffices to prove that
  \begin{equation*}
    \langle (d+d^*) (-\Delta +1)^{-1} (d+d^*) \eta,
      \eta \rangle \le
    \Vert \eta \Vert^2 \qquad \text{for all $\eta
     \in \Omega^1_{\mathrm c}(\Lambda(I))$},
  \end{equation*}
  which follows from the spectral theorem applied to the elliptic
  operator $d+d^* \colon \Omega_{\mathrm c}(\Lambda(I)) \to
  \Omega_{\mathrm c}(\Lambda(I))$.

  The formal adjoint of $R^{1/2}\ddiv$ is $-\grad R^{1/2}$, so we
  conclude $\Vert \grad R^{1/2} \Vert \le 1$, and finally $\Vert \grad
  R \ddiv \Vert \le 1$, by composition.
\end{proof}

\begin{lem} \label{l-diffest}
  There is a constant $\hat C_2 > 0$, which does not depend on $\omega
  \in \Omega$ and $I \subset \Gamma$, such that
  \begin{equation*}
  \Vert R \Vert,\ \Vert \ddiv_{i,\beta} R \Vert,\ \Vert
  \ddiv_{i,\beta} R \ddiv_{j,\mu} \Vert \le \hat C_2.
  \end{equation*}
\end{lem}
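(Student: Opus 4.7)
The plan is to reduce every operator appearing in the statement to the four ``bare'' combinations $R,\ \grad R,\ R\,\ddiv,\ \grad R\,\ddiv$ that have already been bounded by $1$ in Lemma \ref{l-dRdest}. The bound $\|R\|\le 1$ is immediate from the spectral theorem, since $H = H_\omega^I \ge 0$. For the remaining two terms, the strategy is to algebraically separate the first-order differential content of $\ddiv_{i,\beta}$ from its ``coefficient'' content, so that Lemma \ref{l-dRdest} absorbs the differential part and only multiplication operators with uniformly bounded coefficients remain.

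Concretely, set $Y_{i,\beta}:=\psi_\beta X_{i,\beta}$; the Leibniz rule gives
\begin{equation*}
\ddiv_{i,\beta}(h) = g_\omega(Y_{i,\beta},\grad h) + h\,\ddiv_\omega(Y_{i,\beta}),
\end{equation*}
so that $\ddiv_{i,\beta}R = M_{Y_{i,\beta}}^* (\grad R) + M_{\ddiv_\omega(Y_{i,\beta})} R$, where $M_{Y_{i,\beta}}^*$ denotes the pointwise $g_\omega$-pairing with $Y_{i,\beta}$. The crucial uniform estimates are then: (i) $|Y_{i,\beta}|_{g_\omega}$ is uniformly bounded because $X_{i,\beta}$ is $g_0$-orthonormal on $\supp\psi_\beta$ and $\{g_\omega\}_\omega$ is relatively bounded w.r.t.\ $g_0$ via \eqref{quasiisom}; (ii) $\ddiv_\omega(Y_{i,\beta})$ is a uniformly bounded smooth function because the derivatives of $A_\omega$ are controlled by \eqref{gradbound}, and because the data $(\psi_\beta,X_{i,\beta})$ were defined intrinsically on the compact base $M$ and lifted periodically. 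Both multiplication operators are therefore uniformly bounded, and combining with Lemma \ref{l-dRdest} gives the desired estimate for $\|\ddiv_{i,\beta}R\|$.

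For the third operator, I would also factor $\ddiv_{j,\mu} = \ddiv\circ M_{Y_{j,\mu}}$, where $M_{Y_{j,\mu}}\colon h\mapsto hY_{j,\mu}$ maps scalar $L^2$ into vector-field $L^2$ with norm controlled by the same pointwise bound on $|Y_{j,\mu}|_{g_\omega}$. Substituting the expansion of $\ddiv_{i,\beta}$ on the left and of $\ddiv_{j,\mu}$ on the right yields
\begin{equation*}
\ddiv_{i,\beta}R\,\ddiv_{j,\mu} = M_{Y_{i,\beta}}^*(\grad R\,\ddiv)M_{Y_{j,\mu}} + M_{\ddiv_\omega(Y_{i,\beta})}(R\,\ddiv)M_{Y_{j,\mu}},
\end{equation*}
after which Lemma \ref{l-dRdest} controls the middle factors by $1$ and the multiplication operators on either side contribute only $\omega$- and $I$-independent constants. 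Taking $\hat C_2$ to be the maximum of the resulting bounds finishes the proof. The only real point to check is the uniformity of the geometric quantities $|Y_{i,\beta}|_{g_\omega}$ and $\ddiv_\omega(Y_{i,\beta})$ in $\omega$ and $I$; this is the sole place where the assumptions of relative boundedness and $\Gamma$-periodicity of the framing data are used, and it is a direct consequence of Definition~\ref{def:rel.bdd}, requiring no further analytic input.
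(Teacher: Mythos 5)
Your proposal is correct and follows essentially the same route as the paper: the same Leibniz-rule splitting of $\ddiv_{i,\beta}$ into a $g_\omega$-pairing with $\psi_\beta X_{i,\beta}$ plus multiplication by $\ddiv_\omega(\psi_\beta X_{i,\beta})$, reduction to the four bare operators of Lemma~\ref{l-dRdest}, and uniform control of the coefficient functions via \eqref{quasiisom}, \eqref{gradbound} and periodicity of the framing data. No gaps.
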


\begin{proof}
  Since by definition
  \begin{equation*}
  \ddiv_{i,\beta} R \phi = (R \phi) \ddiv(\psi_\beta X_{i,\beta}) +
  g_\omega(\grad R \phi, \psi_\beta X_{i,\beta} ),
  \end{equation*}
  we conclude with Lemma \ref{l-dRdest} that
  \begin{eqnarray*}
    \Vert \ddiv_{i,\beta}R\phi \Vert & \le & \Vert \ddiv(\psi_\beta
    X_{i,\beta}) \Vert_\infty \cdot \Vert R\phi \Vert + \Vert
    \psi_\beta X_{i,\beta} \Vert_{\infty,g_\omega} \cdot \Vert \grad R
    \phi \Vert \\ & \le & \bigl( \Vert \ddiv(\psi_\beta X_{i,\beta})
      \Vert_\infty + C_{{\rm rel},0}^{1/2} \Vert \psi_\beta X_{i,\beta}
      \Vert_{\infty,g_0} \bigr) \Vert \phi \Vert,
  \end{eqnarray*}
  where $C_{{\rm rel},0}$ is the uniform quasi-isometry constant
  in~\eqref{quasiisom} comparing the metrics $g_0$ and $g_\omega$.
  Note that $X_{i,\beta}, \psi_\beta$ are periodic and independent of
  the choices $\omega, I$ and that the term $\Vert
  \ddiv_\omega(\psi_\beta X_{i,\beta}) \Vert_\infty$ can be uniformly
  bounded for all $\omega \in \Omega$ by the relative boundedness
  assumptions on the metrics $g_\omega$.

  Similarly,
  \begin{eqnarray*}
 & &  \ddiv_{i,\beta} R \ddiv_{j,\mu} \phi\\ 
& = &\ddiv \left( (R\ddiv
      M_{\psi_\mu X_{j,\mu}} \phi)
      \psi_\beta X_{i,\beta} \right) \\
& = &g_\omega( \psi_\beta X_{i,\beta},
               \grad R \ddiv M_{\psi_\mu X_{j,\mu}} \phi ) +
      M_{\ddiv(\psi_\beta X_{i,\beta})}
            R \ddiv M_{\psi_\mu X_{j,\mu}} \phi
  \end{eqnarray*}
  implies that
  \begin{multline*}
    \Vert \ddiv_{i,\beta} R \ddiv_{j,\mu} \phi \Vert \\\le
      \bigl(
        C_{{\rm rel},0}^{1/2} \Vert \psi_\beta X_{i,\beta}\Vert_{\infty,g_0}
      \, \Vert \grad R \ddiv \Vert + \Vert \ddiv(\psi_\beta
      X_{i,\beta}) \Vert_\infty \, \Vert R \ddiv \Vert \bigr) \cdot\\
    \cdot C_{{\rm rel},0}^{1/2}\, \Vert \psi_\mu X_{j,\mu}
    \Vert_{\infty,g_0}\, \Vert \phi \Vert\\
    \le \bigl( C_{{\rm rel},0}^{1/2} \Vert \psi_\beta
      X_{i,\beta}\Vert_{\infty,g_0} + \Vert \ddiv(\psi_\beta
      X_{i,\beta}) \Vert_\infty \bigr) C_{{\rm rel},0}^{1/2}\, \Vert
    \psi_\mu X_{j,\mu} \Vert_{\infty,g_0}\, \Vert \phi \Vert.
  \end{multline*}
\end{proof}

\begin{proof}[Proof of Theorem \ref{t-nakamura}]
  We first prove the commutator relation~\eqref{e-fRnu} by induction.
  The equation is obviously satisfied in the case $\nu =1$ with $N_1 =
  1$, $f_{11} = f$, $B_{11} = \Id$. Assume that the equation is true
  for $\nu-1$. Using Proposition~\ref{commut3} we obtain
  \begin{align*}
    f R^\nu &=  F (f R) R^{\nu -1}\\
    & = (F R) (f R^{\nu-1})
       + (F R)((D^{(1)}f) R^{\nu-1}) R \\
       &\quad+ (F R (\ddiv_{i,\beta} R))((D^{(2,i,\beta)}f) R^{\nu-1})\\
       &\quad+ (F R (\ddiv_{i,\beta} R))
             ((D^{(3,i,\beta)}f) R^{\nu-1})R \\
       &\quad+ (F R (\ddiv_{i,\beta} R
              \ddiv_{j,\mu})) ((D^{(4,i,\beta,j,\mu)}f) R^{\nu-1})R.
  \end{align*}
  Note that each term involved is of the form $(F R B)((\wt D f)
  R^{\nu-1}) R^s$ with $B \in {\mathcal B}$, $s \in \{0,1\}$ and $\wt
  D \in {\mathcal D}$. Using the induction hypothesis we conclude that
  \begin{equation*}
    (\wt D f) R^{\nu-1} = \sum_{i=1}^{N_{\nu-1}} \prod_{j=1}^{\nu-1}
  g_{ij} R B_{ij},
  \end{equation*}
  where $g_{ij}$ is of the form $D \wt D f$ and $D$ is a composition of
  $\nu-2$ operators in ${\mathcal D}$, and the operators $B_{ij}$ are
  of the form $B R^l$ with $B \in {\mathcal B}$ and $0 \le l \le
  \nu-2$.  This finishes the induction step.

  The norm estimates~\eqref{fijest} and~\eqref{Bijest} are easy
  consequences of Lemmas~\ref{l-Dest} and~\ref{l-diffest}.
\end{proof}


\section{A trace class estimate of the resolvent}
\label{s-TraceRes}

In this final section we prove the following proposition:
\begin{prp}
  \label{FR}
  Let $F_0 \in \Cci X$ be a fixed smooth function with compact
  support. For $I \subset \Gamma$ and $\omega \in \Omega$, let
  $R^I_\omega := (H^I_\omega + 1)^{-1}$.  Then there is a constant $C
  > 0$, independent of $\omega$ and $I$ such that
  \begin{equation*}
    \Vert F_0\, R^I_\omega \Vert_{\cJ_q,\omega} \le C.
  \end{equation*}
\end{prp}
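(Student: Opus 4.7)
The strategy is to reduce the claim to a Schatten-norm estimate for a compactly supported function times the Dirichlet Laplacian resolvent, and then to establish this estimate via heat-semigroup calculus together with uniform Gaussian bounds for the heat kernel.

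As a first step I would reduce $H^I_\omega$ to $-\Delta^I_\omega$. In the RAP setting Remark~\ref{r-pot} gives that $V_\omega$ is uniformly infinitesimally $\Delta_\omega$-bounded, while in the RAM setting $V_\omega \equiv 0$; in both cases the graph norms of $H^I_\omega$ and $-\Delta^I_\omega$ are equivalent with constants independent of $\omega$ and $I$. Hence $(-\Delta^I_\omega+1)(H^I_\omega+1)^{-1}$ is uniformly bounded and, by the ideal property of $\cJ_q$,
\begin{equation*}
  \|F_0 R^I_\omega\|_{\cJ_q,\omega}
  \le \|F_0 (-\Delta^I_\omega+1)^{-1}\|_{\cJ_q,\omega} \cdot \|(-\Delta^I_\omega+1)(H^I_\omega+1)^{-1}\|,
\end{equation*}
reducing the problem to a uniform bound on the first factor.

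Using the heat-semigroup representation $(-\Delta^I_\omega+1)^{-1} = \int_0^\infty e^{-s} e^{s\Delta^I_\omega}\,\dd s$, the triangle inequality for $\cJ_q$ (valid since $q \ge 1$), and the interpolation $\|A\|_{\cJ_q} \le \|A\|_{\cJ_2}^{2/q}\,\|A\|^{1-2/q}$, the problem reduces to bounding $\|F_0 e^{s\Delta^I_\omega}\|_{\cJ_2,\omega}$, which by the Hilbert--Schmidt identity equals
\begin{equation*}
  \Bigl(\int_{\Lambda(I)} |F_0(x)|^2\, p^{I,\omega}_{2s}(x,x)\,\dd\vol_\omega(x)\Bigr)^{1/2},
\end{equation*}
where $p^{I,\omega}_t$ is the Dirichlet heat kernel of $-\Delta^I_\omega$ on $\Lambda(I)$. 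Dirichlet domain monotonicity gives $p^{I,\omega}_t(x,x) \le p^\omega_t(x,x)$, the heat kernel on the complete manifold $(X,g_\omega)$, and by Lemma~\ref{lem:rel.bd} this manifold has bounded geometry with constants independent of $\omega$. Classical on-diagonal Gaussian upper bounds then yield $p^\omega_t(x,x) \le C t^{-d/2}$ for $0<t\le 1$ with $C$ independent of $\omega$.

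Combining these estimates, $\|F_0 e^{s\Delta^I_\omega}\|_{\cJ_q,\omega} \le C_F\, s^{-d/(2q)}$ for $0 < s \le 1$, while for $s \ge 1$ the bound $\|F_0 e^{s\Delta^I_\omega}\|_{\cJ_q,\omega} \le \|F_0 e^{\Delta^I_\omega}\|_{\cJ_q,\omega}$ suffices. The $s$-integral against $e^{-s}$ then converges precisely because $q > d/2$, which holds by Definition~\ref{d-constants}. The main obstacle is ensuring that all constants are truly independent of $\omega$ and $I$; this is the key role of the assumption of relative boundedness of the random metrics, which through Lemma~\ref{lem:rel.bd} yields uniform bounded geometry of $(X,g_\omega)$, and of Dirichlet domain monotonicity, which transports the Gaussian bound to the varying domains $\Lambda(I)$.
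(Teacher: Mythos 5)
Your proof is correct, but it follows a genuinely different route from the paper's. The paper's proof first removes the potential exactly as you do, but then handles the $I$-dependence by transplanting the problem from $\Lambda(I)$ to all of $X$ via the Sobolev extension operator of Theorem~\ref{thm:ext.op} (which is why the uniform bounded geometry of the smoothed agglomerates, Lemma~\ref{lem:rel.bd.aggl}, is needed); it then shows that $F_0 (R_{0,\omega})^{q/2}$ maps $L^2$ boundedly into $\Cb{X}$ using elliptic regularity and the Sobolev embedding Theorem~\ref{thm:sob.emb} (this is where $q/2\ge d/4+1$ enters), deduces a Hilbert--Schmidt bound from the abstract kernel estimate Theorem~\ref{t-HSkernel}, and finally converts this into a $\cJ_q$-bound on $F_0 R_{0,\omega}$ via Brasche's inequality $\Vert F_0 R\Vert_{\cJ_q}^q\le \Vert F_0\Vert_\infty^{q-2}\Vert F_0 R^{q/2}\Vert_{\cJ_2}^2$. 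You instead keep the Dirichlet operator on $\Lambda(I)$ throughout, write the resolvent as a Laplace transform of the heat semigroup, interpolate $\Vert A\Vert_{\cJ_q}\le\Vert A\Vert_{\cJ_2}^{2/q}\Vert A\Vert^{1-2/q}$ at each time $s$, and control the Hilbert--Schmidt norm by the on-diagonal heat kernel, using Dirichlet domain monotonicity to dispose of the $I$-dependence and uniform bounded geometry (Lemma~\ref{lem:rel.bd}) for the $\omega$-uniform bound $p^\omega_t(x,x)\le Ct^{-d/2}$. The trade-off: your argument dispenses entirely with the extension operator and the bounded geometry of $(\Lambda(I),g_\omega)$ (the part requiring Brooks' smoothing construction), and the condition $q>d/2$ appears transparently as integrability of $s^{-d/(2q)}e^{-s}$ near $s=0$; on the other hand you import a nontrivial external input, namely uniform Gaussian on-diagonal heat kernel bounds under bounded geometry, which the paper does not prove but which is classical and consistent with the reference \cite{CheegerGT-82} already used for the Sobolev embedding. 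Two minor points you should make explicit: the factor $\vol_\omega(\supp F_0)$ arising from the diagonal integral is controlled by $\vol_0(\supp F_0)$ via \eqref{quasiisom}, and the uniform boundedness of $(-\Delta^I_\omega+1)(H^I_\omega+1)^{-1}$ requires that the uniform relative bound of Remark~\ref{r-pot} passes to the Dirichlet restrictions (the paper glosses over the same point).
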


Recall that Lemma~\ref{hilf2} claims $\Vert (F_0 \circ \gamma)
R^I_\omega \Vert_{\cJ_{q,\omega}} \le C$, independently of the choice
of $\omega, I$ and $\gamma \in \Gamma$. This, however, is an immediate
consequence of Proposition \ref{FR} and the equivariance property
\eqref{compcomp} of the operators $H_\omega$: Using the unitary map
$U_{(\gamma \omega,\gamma)}\colon L^2(\Lambda(\gamma I),g_{\gamma
  \omega}) \to L^2(\Lambda(I),g_\omega)$ for a given $\gamma \in
\Gamma$, we conclude that
\begin{equation*}
   U_{(\gamma \omega,\gamma)}\, (F_0 \circ \gamma)\, R^I_\omega\,
             U_{(\gamma \omega,\gamma)}^* 
   = F_0\, R^{\gamma I}_{\gamma \omega},
\end{equation*}
and therefore
\begin{equation*}
  \Vert (F_0 \circ \gamma) R^I_\omega \Vert_{\cJ_q,\omega} 
   = \Vert F_0 R^{\gamma I}_{\gamma \omega} \Vert_{\cJ_q,\gamma \omega}.
\end{equation*}
Hence, Proposition~\ref{FR} implies Lemma~\ref{hilf2} and we are left
with the proof of the proposition.
\begin{proof}
  The proof of Proposition \ref{FR} is carried out in three steps:
  \subsubsection*{First Step: Removal of the potential and the
    $\Lambda(I)$-restriction:}
  Let $R^I_{0,\omega} := (- \Delta^I_\omega + 1)^{-1}$. Using the
  ideal property, we obtain
  \begin{equation*}
     \Vert F_0 R^I_\omega \Vert_{\cJ_q,\omega} 
   = \Vert F_0 R^I_{0,\omega} \Vert_{\cJ_q,\omega} \, 
               \Vert (- \Delta^I_\omega + 1) 
         R^I_\omega \Vert_{L^2(\Lambda(I),g_\omega)} 
   \le C_0 \Vert F_0 R^I_{0,\omega} \Vert_{\cJ_q,\omega}.
  \end{equation*}
  Note that the constant $C_0 > 0$ is independent of $\omega$ and $I$,
  because of the uniform infinitesimal $\Delta_\omega$-boundedness of
  the potential $V_\omega$ (see Remark \ref{r-pot}). Therefore if
  suffices to estimate the potential free case.

  From the appendix, we infer that the constants of bounded geometry
  of the manifolds $(X,g_\omega)$ and $(\Lambda(I),g_\omega)$ can be
  chosen independently of $\omega$ and $I$ (see
  Lemmas~\ref{lem:rel.bd} and~\ref{lem:rel.bd.aggl}). Set $R_{0,\omega}
  =( -\Delta_\omega + 1)^{-1}$.  Using the ideal property, we obtain
  \begin{align*}
    \Vert F_0 R^I_{0,\omega} \Vert_{\cJ_q,\omega} &
    \le \Vert F_0 (-\Delta_\omega +1)^{-1} 
          \Vert_{\cJ_{q,\omega}} \, 
          \Vert (-\Delta_\omega +1) \mathcal{E} 
          (-\Delta^I_\omega +1)^{-1} \Vert\\&
    =  \Vert F_0 R_{0,\omega} \Vert_{\cJ_{q,\omega}} \, \Vert
    (-\Delta_\omega +1) \mathcal{E} (-\Delta^I_\omega +1)^{-1} \Vert,
  \end{align*}
  where the first norm at the right side is a (super-)trace norm of
  $L^2(X,g_\omega)$ and the second is the operator norm. Here,
  $\mathcal E$ is the extension operator from $W^2(\Lambda(I),\cA)$
  into $W^2(X,\wt \cA)$ as given in Theorem~\ref{thm:ext.op}.  

  From the equivalence of the Sobolev norms (see
  Lemma~\ref{lem:eq.loc.glob} and Theorem~\ref{thm:ell.reg}) and the
  Sobolev extension Theorem~\ref{thm:ext.op}, we conclude that there
  is another constant $C_1 > 0$ (independent of $I$ and $\omega$) such
  that
  \begin{equation*}
    \Vert F_0 R^I_{0,\omega} \Vert_{\cJ_q,\omega} 
    \le C_1 \Vert F_0 R_{0,\omega} \Vert_{\cJ_{q,\omega}}.
  \end{equation*}
  It remains to prove $F_0 R_{0,\omega} \in \cJ_q(L^2(X,g_\omega))$
  and to derive a uniform estimate for the (super-)trace class norm.

  \subsubsection*{Second Step: Hilbert-Schmidt norm estimate for $F_0
    (R_{0,\omega})^{q/2}$:} 
  Note that $K := {\rm supp}\, F_0 \subset X$ is a compact set.  We
  first convince ourselves that
  \begin{equation*}
     F_0 (-\Delta_\omega +1)^{-q/2} \colon
           L^2(X,g_\omega) \to W^q(X,-\Delta_\omega)
  \end{equation*}
  is bounded: $(-\Delta_\omega +1)^{-q/2}\colon L^2(X,g_\omega) \to
  W^q(X,-\Delta_\omega)$ is by definition norm-preserving. By
  Lemma~\ref{lem:mult.op}, the multiplication with $F_0$ is a bounded
  operator in $W^q(X,g_\omega)$ with norm bounded by a constant $C_2$
  depending only on $q$ and $d$, and pointwise bounds on
  $|\nabla_\omega^i F_0|_\omega$, $i=0, \dots, q$. 
  But the latter can
  be estimated by $\omega$-independent constants using the constants
  $C_{\mathrm{rel},i}$ of Definition~\ref{def:rel.bdd} and bounds on
  $|\nabla_0^i F_0|_0$.

  By the Sobolev embedding Theorem \ref{thm:sob.emb}, the identity map
  $W^q(X,-\Delta_\omega) \to \Cb X$ is bounded and its norm can be
  estimated by geometric constants which hold uniformly for all
  manifolds $(X,g_\omega)$; note that $q/2 \ge d/4 + 1$ by
  Definition~\ref{d-constants}.  Consequently,
  \begin{equation*}
    F_0 (R_{0,\omega})^{q/2} \colon 
         L^2(X,g_\omega) \to \Cb K
  \end{equation*}
  is a bounded operator with norm bounded by a constant $C_3 > 0$,
  depending only on $F_0$ and uniform $\omega$-independent geometric
  constants. Now we can apply Theorem \ref{t-HSkernel} and obtain that
  $F_0 (R_{0,\omega})^{q/2}$ is Hilbert-Schmidt with norm bounded by
  $C_3\, (\vol_\omega K)^{1/2} \le C_3'\, (\vol_0 K)^{1/2}$.

  \subsubsection*{Final Step: Trace class estimate for $F_0
    R_{0,\omega}$:}
  Using Lemma 2 of \cite{Brasche-01} (where $J$ equals the
  multiplication operator by $F_0$, $r=0$, $t=1$, $u=q/2$, $p=q$,
  $\alpha = 1$ and $G_\alpha = R_{0,\omega}$), we conclude from the
  second step that $F_0 R_{0,\omega} \in \cJ_q((L^2(X,g_\omega))$ and
  \begin{equation*}
    \Vert F_0 R_{0,\omega} \Vert_{\cJ_q,\omega}^q 
    \le \Vert F_0 \Vert_\infty^{q-2} \, 
    \Vert F_0 (R_{0,\omega})^{q/2} \Vert_{\cJ_2,\omega}^2
    \le (C_3')^2\, \vol_0({\rm supp}\, F_0)\, \Vert F_0 \Vert_\infty^{q-2}
  \end{equation*}
  and were are done.
\end{proof}

\appendix


\section{}
\label{ss-Olaf}

In this appendix, we first define several Sobolev spaces and show that
they are equivalent under certain geometric assumptions. Most of the
material is standard (see e.g.~\cite{Eichhorn-88,Schick-01}).
Afterwards we prove an extension theorem and a Sobolev embedding
theorem.  Note, that it is crucial for our applications, that the
involved constants are \emph{independent} of the random parameter
$\omega$ in the random metric family $\{g_\omega\}_\omega$ and the
choice of $I \subset \Gamma$ in the agglomerates $\Lambda(I)$.
Finally, we recall a Hilbert-Schmidt norm estimate for operators
with continuous kernels.


\subsection{Sobolev spaces on manifolds}
\label{sec:sob.spaces}

Suppose that $M$ is a manifold (possibly with boundary). Suppose, in
addition, that $\{\phi_\alpha\}_\alpha$ is an atlas of $M$ with charts
$\phi_\alpha \colon V_\alpha \to U_\alpha$, where $U_\alpha$ is an
open cover of $M$ and $V_\alpha \subset [0,\infty {[} \times
\RR^{d-1}$. Let $\{\chi_\alpha \}_\alpha$ be a subordinated family of
smooth functions satisfying $\sum\nolimits_\alpha \chi_\alpha^2 = 1$.
Note that $\{\chi_\alpha^2 \}_\alpha$ forms a \emph{partition of
  unity}. We refer to the pair of families $\cA := \{\phi_\alpha,
\chi_\alpha \}_\alpha$ as an \emph{atlas}.

Now, we will define three different types of Sobolev spaces.  The
\emph{local Sobolev space} $W^k(M,\cA)$ of order $k$ with respect to
the atlas $\cA$ is given as the space of function with finite norm
\begin{equation}
  \label{eq:sob.loc}
  \| u \|^2_{W^k(M,\cA)} :=
    \sum_{\alpha \in A} \| \chi_\alpha u_\alpha\|^2_{W^k(V_\alpha)}
\end{equation}
where $u_\alpha := u \circ \phi_\alpha$ and the norm on the RHS is the
usual Sobolev norm in $\RR^d$.

Associated with a Riemannian metric $g$ on $M$, we define the
\emph{global Sobolev space} $W^k(M,g)$ as the space of function with
finite norm
\begin{equation}
  \label{eq:sob.glob}
  \| u \|^2_{W^k(M,g)} :=
    \sum_{i=0}^k \| |\nabla_g^i u|_g \|^2_{L^2(X,g)}
\end{equation}
where $|\nabla_g^i u|_g$ is the pointwise norm of the $i$th covariant
derivative (in the weak sense) with respect to the metric $g$.

Finally, associated with a non-negative (self-adjoint) operator $H$ on
$M$ (usually $H=-\Delta_M$ or $H=-\Delta_M+V$) we define the
\emph{graph norm Sobolev space} with respect to the operator $H$ as
$W^k(M,H) :=\dom (H+1)^{k/2}$ with norm
\begin{equation}
  \label{eq:sob.graph}
  \| u \|_{W^k(M,H)} := \| (H+1)^{k/2} u \|_{L^2(X,g)}.
\end{equation}


\subsection{Manifolds of bounded geometry}
\label{sec:bdd.geo}

In the following we provide the general geometric setting for which we
will establish our results on Sobolev spaces.  We adopt the notion
of~\cite[Sec.~3]{Schick-96} or~\cite{Schick-01}.  Denote by $B_M(x,r)$
the open ball of radius $r$ around $x$ in $(M,g)$.
\begin{dfn}
\label{def:bdd.geo}
A Riemannian manifold $(M,g)$ with boundary $\bd M$ is of
\emph{bounded geometry} iff the following conditions are fulfilled for
constants $r_0>0$, and $C_k>0$ for $k \in \NN, k \ge 0$:
\renewcommand{\labelenumi}{(\roman{enumi})}
  \begin{enumerate}
  \item The collar map
    \begin{equation*}
      [0, r_0{[} \times \bd M \to M, \quad
      (t,x) \mapsto \exp^M_x(t \mathrm n_x)
    \end{equation*}
    is a diffeomorphism onto its image where $\mathrm n_x \in T_xM$ is
    the unit normal inward vector at $x \in \bd M$. Set
    $\bd_\tau M := \{ x \in M \mid d(x,\bd M) < \tau\}$.
  \item The injectivity radius of $\bd M$ as a $(d-1)$-dimensional
    manifold is bounded from below by $r_0$.
  \item We have normal boundary coordinates at $x_0 \in \bd M$, i.e.,
    \begin{equation}
      \label{eq:norm.coord.bd}
      \phi_{x_0} \colon {[}0, r_0 {[} \times B_{\bd M}(x_0,r_0) \to M, \quad
      (t,x) \mapsto \exp^M_x(t \mathrm n_x).
    \end{equation}
  \item The injectivity radius of $M \setminus \bd_{2r_0/3}M$ is bounded
    from below by $r_0/3$. In particular, (inner) normal coordinates
    \begin{equation*}
      \phi_x \colon B(0,r_0/3) \to M, \quad v \to \exp^M_x(v)
    \end{equation*}
    exist where $x \in M \setminus \bd_{2 r_0/3}M$.
  \item
    \label{enum:bdd.curv}
    We have
    \begin{equation*}
      |(\nabla^M)^k R | \le C_k \und
      |(\nabla^{\bd M})^k \ell| \le C_k, \qquad \text{for all $k \ge 0$,}
    \end{equation*}
    where $\nabla^M$ and $\nabla^{\bd M}$ are the covariant
    derivatives in $M$ and $\bd M$, resp., $R$ the Riemann curvature
    tensor of $M$ and $\ell$ the second fundamental form of $\bd M$ in
    $M$.
  \end{enumerate}
  We refer to an atlas $\{\phi_{x_0},\phi_x\}$ of the above type~(iii)
  and~(iv) as a \emph{normal atlas}.

  Note that we can replace~(v) by the following condition
  (cf.~\cite[Prop.~3.7]{Schick-96}

  and~\cite[Thm.~2.5~(c)]{Schick-01}):
  \begin{enumerate}
  \item[(v')] Denote by $g_{ij}$ the metric components in (boundary)
    normal coordinates and by $g^{ij}$ the components of its inverse.
    We assume that there exists $C'_0>0$ such that
    \begin{equation}
      \label{eq:unif.elliptic}
      (C'_0)^{-1} |v|^2 \le \sum_{ij} g_{ij}(x) v_i v_j \le C'_0 |v|^2
    \end{equation}
    for all $x$ in the chart, $v \in \RR^d$. Furthermore,
    we assume that for each $k \in \NN, k \ge 1$ there exists a universal
    constant $C'_k>0$ such that
    \begin{equation*}
      |D^\kappa g_{ij}(x)| \le C'_k \und
      |D^\kappa g^{ij}(x)| \le C'_k
    \end{equation*}
    for all $x$, all multi-indices $\kappa$ with $|\kappa| \le k$ and
    all $k \ge 1$. Here, $D^\kappa$ denotes the partial derivative with
    respect to the coordinates.
  \end{enumerate}
\end{dfn}

We now explain how the concept of bounded geometry fits into the
framework of relatively bounded families of metrics introduced in
Definition~\ref{def:rel.bdd}:
\begin{lem}
  \label{lem:rel.bd}
  Let $(X,g_0)$ be a Riemannian covering manifold with compact
  quotient. Let $\{g_\omega\}_\omega$ be a family of Riemannian
  metrics, relatively bounded with respect to $g_0$. Then
  $(X,g_\omega)$ is of bounded geometry with constants $(r_0,C_k)$
  \emph{independent} of $\omega$.
\end{lem}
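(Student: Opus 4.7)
\medskip

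The plan is to verify the conditions of Definition~\ref{def:bdd.geo} for $(X,g_\omega)$ with constants independent of $\omega$. Since $M=X/\Gamma$ is compact and $g_0$ is $\Gamma$-invariant, $(X,g_0)$ is itself of bounded geometry with some fixed constants $(r_0^{(0)},C_k^{(0)})$; I will use these, together with the relative bounds on $A_\omega$ from Definition~\ref{def:rel.bdd}, to produce uniform bounded-geometry constants for $g_\omega$. The covering $X$ has no boundary, so only conditions (iv) and (v) (equivalently (v$'$)) must be checked.

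The most efficient route, which I would adopt, is via the equivalent formulation (v$'$). Fix a normal atlas $\{\phi_x\}_{x\in X}$ for $(X,g_0)$ of radius $r_0^{(0)}/3$; in these charts the components $(g_0)_{ij}$ satisfy~\eqref{eq:unif.elliptic} and all partial-derivative bounds with the constants $C_k^{(0)}$. Writing $(g_\omega)_{ij}(y)=(g_0)_{ik}(y)\,(A_\omega)^{k}{}_{j}(\phi_x(y))$, the two-sided bound in~\eqref{quasiisom} gives uniform ellipticity~\eqref{eq:unif.elliptic} for $(g_\omega)_{ij}$ with a constant depending only on $C_{\rel,0}$ and $C_0^{(0)}$. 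To pass from the covariant bound $|\nabla_0^k A_\omega|_0\le C_{\rel,k}$ to bounds on the ordinary partial derivatives $D^\kappa (A_\omega)^k{}_j$ in $g_0$-normal coordinates, I would expand $\nabla_0^k A_\omega$ in these coordinates: the difference between $\nabla_0^k$ and $D^k$ is a polynomial in the $g_0$-Christoffel symbols and their derivatives, all of which are uniformly bounded by the bounded geometry of $(X,g_0)$. Induction on $|\kappa|$ then yields $|D^\kappa (g_\omega)_{ij}(y)|\le C_k''$ and the same for $(g_\omega)^{ij}$ (using Cramer's rule plus the uniform lower bound on $\det g_\omega$). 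This verifies (v$'$) on the $g_0$-normal atlas with constants depending only on $C_{\rel,k}$ and the bounded-geometry constants of $g_0$.

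It remains to deduce conditions (iv) and (v) for $(X,g_\omega)$, i.e., a \emph{uniform lower bound} on the $g_\omega$-injectivity radius and uniform curvature bounds on an atlas of $g_\omega$-normal coordinates. For curvature, $R_\omega$ and its $\nabla_\omega$-derivatives are universal polynomial expressions in $A_\omega^{\pm 1}$, the iterated covariant derivatives $\nabla_0^i A_\omega$ with $i\le k+2$, and $\nabla_0^j R_0$ with $j\le k$; thus $|\nabla_\omega^k R_\omega|_\omega$ is uniformly bounded. For the injectivity radius, the quasi-isometry~\eqref{quasiisom} gives $\vol_\omega B_\omega(x,r)\ge c\,\vol_0 B_0(x,r/\sqrt{C_{\rel,0}})$, which is bounded below by a positive constant at a fixed small scale since $(X,g_0)$ has bounded geometry; combined with the uniform sectional curvature bound, the Cheeger--Gromov--Taylor estimate produces a uniform lower bound $r_0$ for $\injrad_\omega$. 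Together these give bounded geometry of $(X,g_\omega)$ with constants independent of $\omega$. The only mildly technical step is the explicit derivative bookkeeping in the previous paragraph; everything else is bounded geometry of $g_0$ combined with Definition~\ref{def:rel.bdd}.
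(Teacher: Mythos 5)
Your proof is correct and follows the same main route as the paper: reduce to conditions (iv) and (v$'$) of Definition~\ref{def:bdd.geo} (no boundary), use the periodic/normal atlas of $(X,g_0)$ coming from the compact quotient, get uniform ellipticity from~\eqref{quasiisom}, and convert the covariant bounds~\eqref{gradbound} on $A_\omega$ into coordinate-derivative bounds via the bounded Christoffel data of $g_0$ --- this is exactly the paper's identity~\eqref{eq:part.cov}. Two small differences: first, having established (v$'$) you do not also need to verify (v) by computing $R_\omega$ and its derivatives, since the paper takes (v) and (v$'$) as interchangeable; your curvature computation is harmless but redundant. Second, for the injectivity radius the paper simply asserts that $\injrad(X,g_\omega)\ge \rho_0\,(C_{\rel,0})^{-1/2}$ follows from the quasi-isometry~\eqref{quasiisom}, whereas you derive it from a uniform volume lower bound plus the uniform curvature bound via Cheeger--Gromov--Taylor. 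Your version is the more careful one --- a two-sided metric bound alone does not obviously rule out short contractible geodesic loops, so combining it with the curvature bounds (as you do) is the cleaner way to justify the uniform lower bound that the paper states in one line.
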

\begin{proof}
  Let us first show that $(X,g_0)$ is of bounded geometry. Since $X$
  has no boundary, we only have to verify~(iv) and~(v'): Obviously,
  the injectivity radius is bounded from below by $\rho_0>0$.
  Furthermore, if we introduce a so-called \emph{periodic atlas},
  namely a lift of a finite atlas on the compact quotient, it is
  clear, by compactness of the quotient and periodicity of the metric,
  that its components $g_{ij}$ with respect to a periodic atlas
  fulfill the estimates in~(v').

  \sloppy Now, the injectivity radius of $(X,g_\omega)$ is still
  bounded from below by $\rho_0 (C_{\rel,0})^{-1/2}$, due
  to~\eqref{quasiisom}.  The estimate~\eqref{eq:unif.elliptic} follows
  similarly.  Furthermore, the coordinate derivatives $D^\kappa
  g_{\omega,ij}$ can be expressed in terms of covariant derivatives
  $\nabla^l A_\omega$ for all $l \le |\kappa|$, since
  $g_\omega(v,v)=g_0(A_\omega v, v)$ and
  \begin{equation}
    \label{eq:part.cov}
    \partial_{i_1} \dots \partial_{i_k} =
    \nabla_{\partial_{i_1}} \dots \nabla_{\partial_{i_k}} +
      \sum_{|\kappa| < k} p_\kappa \partial^\kappa
  \end{equation}
  on tensor fields, where $p_\kappa$ is a polynomial depending only on
  the metric $g_0$ and its first $k-1$ derivatives. Here, $\nabla$ is
  the covariant derivative with respect to the periodic metric $g_0$.
  Finally, the \emph{uniform} bounded geometry of $(X,g_\omega)$
  follows from~\eqref{quasiisom} and~\eqref{gradbound}.
\end{proof}

 Let $(X,g_0)$ be a Riemannian covering manifold
with covering group $\Gamma$ and compact quotient. We fix a
(relatively compact) fundamental domain $\cF$. For any subset $I
\subset \Gamma$ let $\Lambda_0(I)$ be the $I$-agglomerate defined
in~\eqref{def:agglo}.  Furthermore, let $\Lambda(I)$ be the smoothed
version of $\Lambda_0(I)$ as constructed in~\cite[pp.~593]{Brooks-81}
and satisfying~\eqref{e-agglo-incl}.
\begin{lem}
  \label{lem:rel.bd.aggl}
  Let $\{g_\omega\}_\omega$ be a family of Riemannian metrics on $X$,
  relatively bounded with respect to $g_0$. Then
  $(\Lambda(I),g_\omega)$ and $(X \setminus \Lambda(I),g_\omega)$ are
  of bounded geometry with constants $(r_0,C_k)$ \emph{independent} of
  $\omega$ and $I \subset \Gamma$.
\end{lem}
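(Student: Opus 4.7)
The plan is to split the argument along the two parameters and handle them in succession. First I would prove the statement for the fixed periodic metric $g_0$, establishing that $(\Lambda(I),g_0)$ is of bounded geometry with constants independent of $I\subset\Gamma$. Then I would transfer the uniform bounds to $g_\omega$ by the same mechanism used in Lemma~\ref{lem:rel.bd} to pass from $(X,g_0)$ to $(X,g_\omega)$. The claim for $X\setminus\Lambda(I)$ is handled simultaneously, since its boundary coincides with $\bd\Lambda(I)$ after reversing the inward unit normal.

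For the $I$-uniformity at the level of $g_0$, the essential ingredient is the equivariance $\Lambda(\gamma I)=\gamma\Lambda(I)$ combined with the fact that $\Gamma$ acts on $(X,g_0)$ by isometries. Hence the local boundary geometry of $\bd\Lambda(I)$ at any $x\in\bd\Lambda(I)$ is, after an appropriate $\Gamma$-translation sending $x$ into a neighborhood of $\cF$, determined entirely by the combinatorial pattern of $I$ near this fixed reference cell. Since the set $\{\gamma\in\Gamma\mid \gamma\overline\cF\cap \overline\cF\neq\emptyset\}$ is finite, this pattern belongs to a finite catalogue of local models. Taking the worst constant over this finite set yields uniform-in-$I$ lower bounds on the collar radius, on the injectivity radii of $\bd\Lambda(I)$ and of $\Lambda(I)\setminus\bd_{2r_0/3}\Lambda(I)$, and uniform-in-$I$ pointwise bounds on all covariant derivatives $(\nabla^{\bd})^k\ell_0$ of the second fundamental form of $\bd\Lambda(I)$ in $(X,g_0)$. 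The interior conditions~(iv) and~(v') of Definition~\ref{def:bdd.geo} are already known for $(X,g_0)$ from Lemma~\ref{lem:rel.bd}.

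The passage from $g_0$ to $g_\omega$ repeats, in the presence of a boundary, the argument of Lemma~\ref{lem:rel.bd}. The inequality~\eqref{quasiisom} yields the uniform ellipticity~\eqref{eq:unif.elliptic} in any boundary-normal $g_0$-chart, and iterated use of~\eqref{eq:part.cov}, combined with~\eqref{gradbound} and the analogous bound on $\nabla_0^k A_\omega^{-1}$ noted after Definition~\ref{def:rel.bdd}, controls the coordinate derivatives of the components of $g_\omega$ in these charts. The $g_\omega$-collar map is obtained by flowing along $g_\omega$-unit inward normals, which differ from the $g_0$-unit normals by a smooth factor built pointwise from $A_\omega$; the uniform collar radius and the smoothness bounds on the collar follow from the pointwise control on $A_\omega$ and $A_\omega^{-1}$. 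Finally, $\ell_\omega$ is a universal algebraic expression in $g_0$, $A_\omega$ and their first derivatives, so the required bounds on $(\nabla^{\bd,g_\omega})^k\ell_\omega$ reduce inductively to the already available bounds on $\ell_0$ and on $\nabla_0^j A_\omega^{\pm 1}$.

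The main obstacle I anticipate lies in the first stage: one has to verify that Brooks' smoothing procedure of~\cite[pp.~593]{Brooks-81}, which turns the piecewise smooth $\Lambda_0(I)$ into the genuinely smooth $\Lambda(I)$, indeed depends only on the local combinatorial adjacency data of $I$, so that its output near a given $x\in\bd\Lambda(I)$ is, after an appropriate $\Gamma$-translation, one of finitely many smooth model pieces in $(X,g_0)$. Once this local-and-combinatorial character of Brooks' construction is confirmed, everything else is a routine if slightly lengthy adaptation of the boundary-free arguments already carried out in Lemma~\ref{lem:rel.bd}.
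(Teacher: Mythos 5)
Your proposal is correct and follows essentially the same route as the paper: first establish uniform-in-$I$ bounded geometry for $(\Lambda(I),g_0)$ and its complement using the fact that, by Brooks' construction and the periodicity of $g_0$, the boundary $\bd\Lambda(I)$ is covered by $\Gamma$-translates of finitely many model hypersurfaces (your ``finite catalogue of local models''), and then pass to $g_\omega$ exactly as in Lemma~\ref{lem:rel.bd}. The point you flag as the main obstacle---that Brooks' smoothing depends only on the local adjacency data of $I$ near $\cF$---is precisely the property the paper invokes from \cite{Brooks-81}, so your argument matches the paper's.
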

Note that the constants $(r_0,C_k)$ of the previous lemma might differ
from the ones found in Lemma~\ref{lem:rel.bd}.
\begin{proof}
  After showing that $(\Lambda(I),g_0)$ and $(X \setminus
  \Lambda(I),g_0)$ are of bounded geometry, the general result follows
  as in the previous proof. Note that the construction of Brooks
  yields the following property of the boundaries of the smoothed
  agglomerates $\Lambda(I)$: There are finitely many relatively
  compact smooth hypersurfaces $H_1, \dots, H_n \subset X$ with
  boundaries, such that for every finite $I \subset \Gamma$ the
  boundary $\bd \Lambda(I)$ can be covered by $\Gamma$-translates of
  these finitely many hypersurfaces, i.e., for each $I$ there exists
  $N \in \NN$ and $\{\gamma_j\}_{1 \le j \le N}$ and a map $\sigma
  \colon \{1, \dots, N\} \to \{1, \dots, n\}$ such that
  \begin{equation*}
    \bd \Lambda(I) = \bigcup_{j=1}^N \gamma_j H_{\sigma(j)}.
  \end{equation*}
  Note that $n$ (the number of hypersurfaces) does \emph{not} depend
  on $I$, and that only \emph{finitely} many hypersurfaces are needed
  is due to the fact that, up to translates, the local shape of $\bd
  \Lambda(I)$ depends only on the geometry of $\cF$ and its nearest
  neighbors. This finiteness, together with the periodicity of
  $(X,g_0)$ ensures that all properties of
  Definition~\ref{def:bdd.geo} (with appropriate constants $r_0$,
  $C_k$) for $(\Lambda(I),g_0)$ and its complement are satisfied.
  Obviously, the constants $r_0$ and $C_k$ are independent of $I$.
\end{proof}


\subsection{Equivalence of Sobolev norms}

To show the equivalence of the local Sobolev norm with the others it
is important to ensure that the normal charts $\phi_{x_0}$ and
$\phi$ in Definition~\ref{def:bdd.geo} satisfy the following
conditions, which we call \emph{admissible}:
\begin{dfn}
  \label{def:admissible}
  Let $M$ be a differentiable manifold (with or without boundary).
  An atlas $\cA=\{\phi_\alpha,\chi_\alpha\}_\alpha$ of $M$ with
  coordinate charts $\phi_\alpha\colon V_\alpha \to U_\alpha \subset M$
  and subordinated functions $\chi_\alpha$ is called
  \emph{admissible} with constants $N_0 \in \NN$ and $\hat C_k>0$
  for $k\in\NN, k \ge 0$, if the following properties are satisfied:
  \renewcommand{\labelenumi}{(\roman{enumi})}
  \begin{enumerate}
  \item The partial derivatives of all coordinate change maps
    $\phi_{\alpha_1, \alpha_2} := \phi_{\alpha_1}^{-1} \circ
    \phi_{\alpha_2}$ are bounded up to all orders,
    i.e.,
    \begin{equation*}
      |\partial^\kappa \phi_{\alpha_1,\alpha_2} | \le \hat C_k
    \end{equation*}
    for all multi-indices $|\kappa| \le k$ and all $k$.
  \item The multiplicity of the covering of the altas, i.e., the
    supremum of the number of neighbors of every fixed chart $U_\alpha$
    is bounded from above by $N_0$.
  \item We have
    $\sum\nolimits_\alpha \chi_\alpha^2 = 1$ and
    \begin{equation*}
      |\partial^\kappa \chi_\alpha| \le \hat C_k
    \end{equation*}
    for all multi-indices $|\kappa| \le k$ and all $k$.
  \end{enumerate}
\end{dfn}

The proof of the following lemma can be found in~\cite[Prop.~3.8
and~3.22]{Schick-96} or~\cite[Props.~3.2, 3.3]{Schick-01}:
\begin{lem} 
  \label{lem:part.1}
  Suppose that $M$ (with or without boundary) is of bounded geometry
  with constants $C_k$ and $r_0$. Then there are constants $N_0 \in \NN$
  and $\hat C_k>0$, depending only on $C_k$ and $r_0$, such that
  an appropriate subatlas of the normal atlas in
  Definition~\ref{def:bdd.geo} is admissible with constants $N_0$
  and $\hat C_k$.
\end{lem}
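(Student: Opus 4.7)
The plan is to construct a concrete subatlas from the normal atlas of Definition~\ref{def:bdd.geo} and then verify properties~(i)--(iii) of Definition~\ref{def:admissible} by a combination of volume comparison, Jacobi field estimates, and a standard partition-of-unity construction.

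\textbf{Step 1 (Construction of the subatlas).} First I would fix a maximal $r_0/6$-separated set $\{x_\alpha\}_\alpha \subset M\setminus \bd_{2r_0/3}M$ for the interior charts and a maximal $r_0/6$-separated set $\{y_\beta\}_\beta\subset \bd M$ for the boundary charts (separation measured in the induced distance on $\bd M$). For the interior points I would use the inner normal chart $\phi_{x_\alpha}\colon B(0,r_0/3)\to M$, and for the boundary points the collar chart $\phi_{y_\beta}$ in~\eqref{eq:norm.coord.bd}. By maximality, the balls $B_M(x_\alpha,r_0/3)$ together with the collar images cover $M$.

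\textbf{Step 2 (Bounded multiplicity).} To check~(ii), I would invoke volume comparison. The bounds on the Riemann curvature tensor and on the second fundamental form in Definition~\ref{def:bdd.geo}(v) give a two-sided comparison of volumes of geodesic balls $B_M(x,r)$ of radius $r\le r_0$ with the Euclidean volume, uniformly in $x$. Since the balls $B_M(x_\alpha, r_0/12)$ (and analogous boundary half-balls) are pairwise disjoint, any ball of radius $r_0$ can contain at most $N_0$ centers, where $N_0$ depends only on $r_0$ and $C_0$. This controls how many charts overlap any given chart.

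\textbf{Step 3 (Coordinate change estimates).} The hard part will be verifying~(i): the bounds on $\partial^\kappa\phi_{\alpha_1,\alpha_2}$. Using condition~(v') together with the relation~\eqref{eq:part.cov} between coordinate and covariant derivatives, the metric components $g_{ij}$ in each normal chart, and their inverses, satisfy bounds depending only on $C_k$ and $r_0$. The inverse map $\phi_{\alpha}^{-1}$ is the logarithm map, whose derivatives along radial Jacobi fields are controlled by the same curvature bounds via the Jacobi equation. Composing two such maps and differentiating inductively, using the Faà di Bruno formula and the uniform ellipticity~\eqref{eq:unif.elliptic}, gives bounds $|\partial^\kappa\phi_{\alpha_1,\alpha_2}|\le \hat C_k$. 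The boundary case requires the same argument, applied to the product decomposition $[0,r_0[\times B_{\bd M}(y_\beta,r_0)$, using the bounds on $\nabla^{\bd M}\ell$ to control the collar map and its inverse.

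\textbf{Step 4 (Partition of unity).} Finally, for~(iii), I would pick a fixed smooth $\widetilde\chi\in C_{\mathrm c}^\infty(\RR^d)$ with $\widetilde\chi\equiv 1$ on $B(0,r_0/6)$ and $\supp\widetilde\chi\subset B(0,r_0/3)$, and set $\psi_\alpha := \widetilde\chi\circ\phi_\alpha^{-1}$ (extended by zero), and analogously near the boundary. Then $\chi_\alpha := \psi_\alpha / \bigl(\sum_\beta \psi_\beta^2\bigr)^{1/2}$. The denominator is bounded below by $1$ because every point lies in the core region of at least one chart; it is smooth and its derivatives are bounded using Step~3 and the multiplicity bound $N_0$ from Step~2. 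Applying the chain rule and Step~3 once more yields $|\partial^\kappa \chi_\alpha|\le \hat C_k$ with constants depending only on $C_k$ and $r_0$, completing the verification.
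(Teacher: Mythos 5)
Your plan is correct and is essentially the standard argument: the paper itself gives no proof of Lemma~\ref{lem:part.1} but refers to \cite[Prop.~3.8 and~3.22]{Schick-96} and \cite[Props.~3.2, 3.3]{Schick-01}, where exactly this construction (a uniformly separated net of chart centers, volume comparison for the multiplicity bound, transition-map estimates from the uniform bounds~(v') on $g_{ij}$ and $g^{ij}$ in normal coordinates, and a normalized bump-function partition of unity) is carried out. Nothing in your outline deviates from that route, so no further comparison is needed.
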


We now show that the different Sobolev spaces defined in
Section~\ref{sec:sob.spaces} have equivalent norms.
\begin{lem}
  \label{lem:eq.loc.glob}
  Suppose that $(M,g)$ is of bounded geometry. Then the local Sobolev
  space $W^k(M,\cA)$ defined with respect to an admissible atlas $\cA$
  (see Definition~\ref{def:admissible}) and the global Sobolev space $W^k(M,g)$
  agree and have equivalent norms
  \begin{equation}
    \label{eq:loc.glob}
    \frac 1 {C_\sob}\| u \|_{W^k(M,g)} \le
    \|u \|_{W^k(M,\cA)} \le
    C_\sob \| u \|_{W^k(M,g)},
  \end{equation}
  where $C_\sob$ depends only on the constants of bounded geometry,
  namely $C_k$ and $r_0$.
\end{lem}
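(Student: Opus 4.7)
The strategy is to work chart by chart with respect to the admissible atlas $\cA=\{\phi_\alpha,\chi_\alpha\}_\alpha$ provided by Lemma~\ref{lem:part.1}, and to use two universal facts: the uniform ellipticity of the metric components in normal coordinates (condition~(v')) and the uniform boundedness of all coordinate derivatives of $g_{ij}$, $g^{ij}$ and $\chi_\alpha$. First I would translate partial derivatives into covariant derivatives (and back) via the identity~\eqref{eq:part.cov}, which reads, schematically,
\begin{equation*}
  \partial^\kappa = \nabla^{|\kappa|} + \sum_{|\kappa'|<|\kappa|} p_{\kappa'}\,\partial^{\kappa'},
\end{equation*}
where $p_{\kappa'}$ is a polynomial in the $g_{ij}, g^{ij}$ and their partial derivatives up to order $|\kappa|-1$. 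By~(v') of Definition~\ref{def:bdd.geo} each $p_{\kappa'}$ is bounded in sup-norm by a constant depending only on the $C'_k$, hence only on the constants of bounded geometry.

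For the inequality $\|u\|_{W^k(M,\cA)}\le C_\sob\|u\|_{W^k(M,g)}$, I would expand $\chi_\alpha u_\alpha$ by Leibniz (using $|\partial^\kappa\chi_\alpha|\le\hat C_k$ from admissibility) to reduce to estimating $\|\chi_\alpha\partial^{\kappa'}u_\alpha\|_{L^2(V_\alpha)}$ for $|\kappa'|\le k$, then invoke~\eqref{eq:part.cov} to replace $\partial^{\kappa'}$ by a combination of $\nabla^{l}u_\alpha$ with $l\le k$ multiplied by bounded coefficients. The uniform ellipticity bound in~\eqref{eq:unif.elliptic} controls the pointwise $g$-norm $|\nabla^l u|_g$ from above and below by the Euclidean norm of the components in the chart, and also relates the Euclidean volume form on $V_\alpha$ to $\dd\vol_g$ via the factor $\sqrt{\det g_{ij}}$, which is bounded above and below by constants depending only on $C'_0$. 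Summing over $\alpha$ and using the finite multiplicity $N_0$ of the covering then yields the required bound.

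The converse inequality $\|u\|_{W^k(M,g)}\le C_\sob\|u\|_{W^k(M,\cA)}$ follows the same scheme in reverse: since $\sum_\alpha\chi_\alpha^2=1$, I would write
\begin{equation*}
  \| |\nabla^i u|_g \|_{L^2(M,g)}^2 = \sum_\alpha \| \chi_\alpha |\nabla^i u|_g \|_{L^2(M,g)}^2,
\end{equation*}
pass to the chart $V_\alpha$, invert~\eqref{eq:part.cov} to express $\nabla^i u$ through partial derivatives $\partial^{\kappa'}u_\alpha$ with $|\kappa'|\le i$ and bounded coefficients, and commute $\chi_\alpha$ past the derivatives modulo Leibniz-terms. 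The finite multiplicity bound prevents the sum from blowing up, and all constants appearing are functions of $C_k$, $r_0$, $N_0$ and $\hat C_k$ only, hence ultimately only of the constants of bounded geometry, as claimed.

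The main obstacle is purely bookkeeping: controlling the multi-index expansions of~\eqref{eq:part.cov} and its inverse together with the Leibniz terms for $\chi_\alpha$, while ensuring that every constant depends only on $C_k$ and $r_0$ and not on $M$, $u$ or $\alpha$. Once one is careful that the finite multiplicity of the cover replaces the usual $\ell^\infty$-versus-$\ell^1$ loss and that the uniform ellipticity converts all pointwise $g$-norms to Euclidean ones with universal constants, the argument is routine.
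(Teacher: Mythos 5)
Your plan is correct and coincides with the paper's own (only sketched) proof: the $k=0$ case from the uniform ellipticity~\eqref{eq:unif.elliptic}, the higher-order case by converting partial into covariant derivatives via~\eqref{eq:part.cov}, and the admissibility constants $N_0$, $\hat C_k$ to control the chart sums. Your version simply fills in the bookkeeping that the paper omits.
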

\begin{proof}
  We only sketch the proof.  For $k=0$ this follows immediately
  from~\eqref{eq:unif.elliptic}. For the higher derivatives we
  use~\eqref{eq:part.cov}, in order to express partial derivatives
  recursively by covariant derivatives, as well as the properties of
  the atlas in Definition~\ref{def:admissible}.
\end{proof}

Denote by $H=-\Delta_M^\Dir \ge 0$ the Laplace operator on $(M,g)$ with
Dirichlet boundary conditions (if $\bd M \ne \emptyset$). Comparing
the graph norm Sobolev space defined via $H$ with the global Sobolev
spaces needs elliptic estimates.
\begin{thm}
  \label{thm:ell.reg}
  Suppose that $(M,g)$ is of bounded geometry. Then for $m \ge 0$ the
  global Sobolev space $W^{2m}(M,g)$ and the graph Sobolev space
  $W^{2m}(M,-\Delta_M^\Dir)$ have equivalent norms
  \begin{multline}
    \label{eq:ell.reg}
    \frac 1 {C_\sob'}\| u \|_{W^{2m}(M,g)} \le
    \|u \|_{W^{2m}(M,H)} =
    \| (-\Delta_M^\Dir + 1)^m u \|_{L^2(M,g)} \\\le
    C_\sob' \| u \|_{W^{2m}(M,g)}
  \end{multline}
  for $u \in W^{2m}(M,-\Delta_M^\Dir)$, where $C_\sob'$ depends only on
  the constants of bounded geometry, namely $C_k$ and $r_0$.
\end{thm}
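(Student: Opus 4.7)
The plan is to prove both inequalities in~\eqref{eq:ell.reg} simultaneously by induction on $m$, reducing the problem to uniform \emph{local} elliptic regularity in the charts of an admissible normal atlas.

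For $m=0$ both sides equal $\|u\|_{L^2(M,g)}$. For the inductive step I fix an admissible normal atlas $\cA=\{\phi_\alpha,\chi_\alpha\}$ as supplied by Lemma~\ref{lem:part.1}; by Lemma~\ref{lem:eq.loc.glob} it is enough to compare $\|u\|_{W^{2m}(M,\cA)}$ with $\|(-\Delta_M^\Dir+1)^m u\|_{L^2}$. The upper bound is the soft direction: by property~(v') of Definition~\ref{def:bdd.geo}, the coefficients of the Laplacian
\begin{equation*}
  -\Delta=-\frac{1}{\sqrt{\det g}}\,\partial_i\bigl(\sqrt{\det g}\;g^{ij}\partial_j\bigr)
\end{equation*}
are, together with all their partial derivatives, uniformly bounded in every chart by constants depending only on $C_k$ and $r_0$. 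Iterating, $(-\Delta+1)^m$ is a differential operator of order $2m$ with uniformly bounded coefficients in each $V_\alpha$, so it maps $W^{2m}(M,\cA)$ boundedly into $L^2(M,g)$ with a uniform constant.

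For the lower bound I apply, in each chart $V_\alpha$, the classical local elliptic regularity estimate for the uniformly elliptic second order operator $-\Delta+1$ (with ellipticity constant controlled by~\eqref{eq:unif.elliptic}) to the product $\chi_\alpha u_\alpha$. In interior charts this is an interior estimate; in boundary charts~\eqref{eq:norm.coord.bd} the Dirichlet condition becomes $\chi_\alpha u_\alpha|_{\{t=0\}}=0$, so the analogous boundary regularity estimate applies. Both yield, uniformly in $\alpha$,
\begin{equation*}
  \|\chi_\alpha u_\alpha\|_{W^{2m}(V_\alpha)} \le C\bigl(\|(-\Delta+1)(\chi_\alpha u_\alpha)\|_{W^{2m-2}(V_\alpha)}+\|\chi_\alpha u_\alpha\|_{L^2(V_\alpha)}\bigr),
\end{equation*}
with $C$ depending only on the bounded geometry constants. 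Expanding the commutator $[-\Delta,\chi_\alpha]$ (first order, with uniformly bounded coefficients by admissibility), squaring, and summing over $\alpha$ using the finite multiplicity $N_0$ of the cover, one obtains
\begin{equation*}
  \|u\|_{W^{2m}(M,\cA)}^2 \le C\bigl(\|(-\Delta+1)u\|_{W^{2m-2}(M,\cA)}^2+\|u\|_{W^{2m-1}(M,g)}^2\bigr).
\end{equation*}
The induction hypothesis applied to $(-\Delta+1)u$ bounds the first term by $\|(-\Delta+1)^m u\|_{L^2}^2$. The intermediate $W^{2m-1}$-norm is absorbed via the standard interpolation inequality $\|u\|_{W^{2m-1}}^2\le \epsilon\|u\|_{W^{2m}}^2+C_\epsilon\|u\|_{L^2}^2$, which holds with uniform constant under bounded geometry; finally $\|u\|_{L^2}\le\|(-\Delta+1)^m u\|_{L^2}$ because $-\Delta+1\ge 1$. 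Taking $\epsilon$ small finishes the induction.

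The main obstacle is verifying that the classical interior and boundary elliptic regularity estimates come with constants depending \emph{only} on the ellipticity constant $C'_0$ and the coefficient bounds $C'_k$ from~(v'), since all dependence on the geometry enters purely through the coefficients on the fixed reference domains $V_\alpha\subset [0,\infty[\times\RR^{d-1}$; once this uniformity is in place, everything else is bookkeeping with the admissible atlas and Lemma~\ref{lem:eq.loc.glob}.
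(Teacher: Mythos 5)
Your proposal is correct, and it supplies an actual argument where the paper does not: for the first (hard) inequality the paper simply refers to \cite[Thm.~1.3]{Dodziuk-81} (with the correction in~\cite[Sec.~2]{Salomonsen-01}) for $\bd M=\emptyset$ and to \cite[Sec.~4]{Schick-96} for the case with boundary, and disposes of the second inequality with the one-line remark that the coefficients of $\Delta_M^{\Dir}$ are controlled in the local charts --- which is exactly your ``soft direction''. Your chartwise induction is, in substance, the argument carried out in those references: localize with an admissible normal atlas, invoke the uniform interior/boundary $L^2$-elliptic estimate for $-\Delta+1$ (the boundary portion is flat, $\{t=0\}$, in the collar coordinates~\eqref{eq:norm.coord.bd}, so the standard half-space Dirichlet estimate applies), control the commutators $[-\Delta,\chi_\alpha]$ by an intermediate norm using the finite multiplicity $N_0$, and absorb that norm by Ehrling interpolation together with $-\Delta_M^{\Dir}+1\ge 1$. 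Two points deserve explicit attention if this were written out in full: (i) the uniformity of the local elliptic constants in terms of $C_0'$ and $C_k'$ alone is the load-bearing input; it is the classical Agmon--Douglis--Nirenberg/G\aa rding-type statement and is indeed available, but it should be cited rather than merely asserted; and (ii) one needs the qualitative fact that every $u\in\dom(-\Delta_M^{\Dir}+1)^m$ lies in $W^{2m}_{\loc}$ up to the boundary before the a priori estimate can be summed over $\alpha$ --- this is local elliptic regularity and causes no trouble, but without it the left-hand side of your chartwise inequality is not yet known to be finite.
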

\begin{proof}
  The second inequality can easily be seen using the local Sobolev
  space, since $\Delta_M^\Dir$ contains the metric and its derivative
  and the fact that the local and global Sobolev spaces have
  equivalent norms by the last lemma.  The proof of the first
  inequality in the case $\bd M=\emptyset$ can be found e.g.\
  in~\cite[Thm.~1.3]{Dodziuk-81} (with a correction
  in~\cite[Sec.~2]{Salomonsen-01}).  The case with boundary can be
  found in~\cite[Sec~4]{Schick-96}.
\end{proof}

The next lemma is needed in the proof of Lemma~\ref{hilf1} and a
simple consequence of the product rule:
\begin{lem}
  \label{lem:mult.op} Let $k\in \NN$ be given. 
  Let $f$ be a smooth function on $(M,g)$ such that its covariant
  derivatives $\nabla^i f$ are pointwise bounded by a constant
  $C_{f,k}$ for all $i=0, \dots, k$. Then the multiplication operator
  \begin{equation*}
    M_f \colon W^k(M,g) \longrightarrow  W^k(M,g), \qquad
    \psi \mapsto f \psi
  \end{equation*}
  is bounded, and its norm is a universal constant in $k$ and
  $C_{f,k}$.
\end{lem}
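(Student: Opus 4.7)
The approach is the standard covariant Leibniz rule together with an elementary $L^2$ triangle inequality. Since the norm on $W^k(M,g)$ is $\|u\|_{W^k}^2 = \sum_{i=0}^k \||\nabla^i u|_g\|_{L^2}^2$, the statement reduces to pointwise bounds on $|\nabla^i(f\psi)|_g$ for each $0\le i\le k$.

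First, I would establish the covariant Leibniz formula
\begin{equation*}
\nabla^i(f\psi) = \sum_{j=0}^{i} \binom{i}{j} \, \nabla^j f \otimes \nabla^{i-j}\psi,
\end{equation*}
where the tensor product is the symmetric one appropriate to the iterated covariant derivative (the connection is torsion-free, and $f$ is a scalar so there are no curvature terms appearing). This is proved by an immediate induction on $i$ starting from $\nabla(f\psi) = (\nabla f)\psi + f\,\nabla\psi$. Taking pointwise $g$-norms and using the sub-multiplicativity of the tensor norm gives
\begin{equation*}
|\nabla^i(f\psi)|_g \le \sum_{j=0}^i \binom{i}{j} |\nabla^j f|_g \, |\nabla^{i-j}\psi|_g
\le C_{f,k} \sum_{j=0}^i \binom{i}{j} |\nabla^{i-j}\psi|_g.
\end{equation*}

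Next, I apply the triangle inequality in $L^2(M,g)$ and the bound $\sum_{j=0}^i\binom{i}{j}=2^i\le 2^k$ to obtain
\begin{equation*}
\||\nabla^i(f\psi)|_g\|_{L^2} \le C_{f,k}\, 2^k \sum_{\ell=0}^{i} \||\nabla^\ell \psi|_g\|_{L^2} \le C_{f,k}\, 2^k\, (k+1)^{1/2}\, \|\psi\|_{W^k(M,g)}.
\end{equation*}
Summing the squares over $i=0,\dots,k$ yields $\|M_f\psi\|_{W^k(M,g)}\le C\,\|\psi\|_{W^k(M,g)}$ with $C$ depending only on $k$ and $C_{f,k}$, as required. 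Strictly speaking the computation is valid for $\psi$ smooth enough, and the general case follows by density of smooth functions in $W^k(M,g)$ (which holds on manifolds of bounded geometry, but here is not even needed for the statement to make sense since the bound is checked on the dense subspace and $M_f$ extends by continuity).

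There is essentially no obstacle: the whole content is the covariant product rule combined with universal combinatorial constants, and no use is made of the geometric hypotheses on $(M,g)$ beyond the very definition of $\nabla$ and $|\cdot|_g$. The only point to be careful about is that the constant does not involve any geometric data of $(M,g)$ beyond the bounds $C_{f,k}$, which is clear from the above.
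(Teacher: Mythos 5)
Your proof is correct and follows exactly the route the paper intends: the paper states Lemma~\ref{lem:mult.op} without proof, calling it ``a simple consequence of the product rule,'' and your argument via the covariant Leibniz rule, pointwise submultiplicativity of tensor norms, and the triangle inequality in $L^2$ is precisely that, with the constant visibly depending only on $k$ and $C_{f,k}$. The only cosmetic caveat is that the cross terms in $\nabla^i(f\psi)$ are index permutations of $\nabla^j f\otimes\nabla^{i-j}\psi$ rather than literal symmetric tensor products, but since permuting indices preserves the pointwise $g$-norm, your estimate is unaffected.
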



\subsection{Extension operators}
\label{ssec:ext.op}

Our next result deals with an extension operator. Let $(X,g)$ be a
complete $d$-dimensional Riemannian manifold and $M \subset X$ be a
submanifold of the same dimension with smooth boundary. Suppose that
$M$ and $X \setminus M$ are of of bounded geometry with constants
$r_0$ and $C_k$. Then $X$ is also of bounded geometry with the same
constants.

Let $\cA$ and $\cA'$ be normal atlasses of $M$ and $X \setminus M$.
An associated atlas $\wt \cA$ of $X$ is given by the inner normal
charts of $\cA$ and $\cA'$ and by extensions of the normal boundary
charts $\phi_{x_0} \colon {[}0,r_0{[} \times B_{\bd M}(x_0,r_0) \to M$
of $\cA$ to collar maps $\wt \phi_{x_0} \colon {]}-r_0,r_0{[} \times
B_{\bd M}(x_0,r_0) \to X$. Clearly, by Lemma~\ref{lem:part.1}, we can
choose an admissible subatlas of $\wt A$ (and the corresponding
subatlas of $\cA$). We denote the subatlasses by the same symbols
$\cA$ and $\wt \cA$.

We denote inner and boundary charts on $M$ by $\phi_\alpha \colon
V_\alpha \to U_\alpha \subset M$ and on $X$ by $\widetilde \phi_\alpha
\colon \widetilde V_\alpha \to \widetilde U_\alpha \subset X$ and
similarly, we denote by $\chi_\alpha$ and $\wt \chi_\alpha$ the
associated partitions of unity.  Note that now, $V_\alpha$ is an open
subset of the half-space $\RR^d_+ = [0,\infty{[} \times \RR^{d-1}$
whereas $\wt V_\alpha$ is open in $\RR^d$.  Clearly, we can assume
that also the chart domains satisfy $\wt V_\alpha \cap \RR^d_+ =
V_\alpha$.

Now we can define a Sobolev extension operator by a local procedure,
using the extension operator $\mathcal E_0 \colon W^k(\RR^d_+) \to
W^k(\RR^d)$ on the half-space $\RR^d_+$ (see
e.g.~\cite[Thm.~7.25]{GilbargT-77}).

\begin{thm}
  \label{thm:ext.op}
  Suppose that $M$ and $X \setminus M$ have bounded geometry with
  constants $r_0$ and $C_l$. Suppose, in addition, that $\cA$ and $\wt
  \cA$ are atlasses of $M$ and $X$ as defined above. Then, for every
  $k \in \NN$, there exists an extention operator
  \begin{equation}
    \label{eq:ext.op}
    \mathcal E \colon W^k(M,\cA) \to W^k(X,\wt \cA),
  \end{equation}
  such that $\|\mathcal E\|$ only depends on $k$, $C_l$ and $r_0$.
\end{thm}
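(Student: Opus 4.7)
The plan is a partition-of-unity localization: use the admissible atlas $\cA$ of $M$ to split $u \in W^k(M,\cA)$ into local pieces, perform a half-space extension in each boundary chart via the classical operator $\mathcal E_0 \colon W^k(\RR^d_+) \to W^k(\RR^d)$ (whose norm depends only on $k$ and $d$), pull the extended pieces back to $X$ via the collar maps $\wt\phi_\alpha$, and glue using the cutoffs $\chi_\alpha$. The point of admissibility is precisely to furnish the quantitative data---bounded multiplicity $N_0$ and uniform $C^k$-bounds on transition maps and cutoffs---needed for the glued operator to have norm controlled solely by $k$, $r_0$ and the $C_l$.

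Concretely, I would split $A$ into interior charts $A_{\mathrm{int}}$ (with $V_\alpha \subset \RR^d$) and boundary charts $A_\bd$ (with $V_\alpha \subset \RR^d_+$, extending to $\wt\phi_\alpha \colon \wt V_\alpha \to \wt U_\alpha$ in $\wt\cA$). For $\alpha \in A_{\mathrm{int}}$, $\chi_\alpha u$ is supported away from $\bd M$ and extends trivially to $w_\alpha \in W^k(X)$. For $\alpha \in A_\bd$, set $f_\alpha := (\chi_\alpha u) \circ \phi_\alpha \in W^k(V_\alpha) \subset W^k(\RR^d_+)$, apply $\mathcal E_0$ to obtain $\mathcal E_0 f_\alpha \in W^k(\RR^d)$, multiply by a fixed cutoff $\eta \in \Cci{\RR^d}$ supported in $\wt V_\alpha$ and equal to $1$ on a neighborhood of $\supp(\chi_\alpha \circ \phi_\alpha)$, pull back via $\wt\phi_\alpha^{-1}$, and extend by zero outside $\wt U_\alpha$ to obtain $w_\alpha$. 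Writing $\wt\chi_\alpha$ for a suitable extension of $\chi_\alpha$ to $X$ (even reflection in the collar coordinate for boundary $\alpha$, zero extension otherwise), I would define
\[
  \mathcal E u := \sum_{\alpha \in A_{\mathrm{int}}} w_\alpha
   + \sum_{\alpha \in A_\bd} \wt\chi_\alpha\, w_\alpha,
\]
so that $\sum_\alpha \chi_\alpha^2 = 1$ yields $(\mathcal E u)|_M = u$.

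For the norm estimate I would expand $\|\mathcal E u\|^2_{W^k(X,\wt\cA)} = \sum_\beta \|\wt\chi_\beta (\mathcal E u)_\beta\|^2_{W^k(\wt V_\beta)}$ and, after substituting the definition of $\mathcal E u$, invoke the admissibility of $\wt\cA$ to bound the number of non-zero cross-terms for each $\beta$ by $N_0$. Inside each chart, the chain rule, the bound $\|\mathcal E_0\| = C(k,d)$, and the uniform $C^k$-bounds on the transition maps $\phi_{\alpha_1,\alpha_2}$, the cutoffs $\chi_\alpha$, $\wt\chi_\alpha$ and the auxiliary $\eta$ control the contribution by a constant multiple of $\|\chi_\alpha u\|^2_{W^k(V_\alpha)}$. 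Summing over $\beta$ and then over $\alpha$ gives $\|\mathcal E u\|^2_{W^k(X,\wt\cA)} \le C \|u\|^2_{W^k(M,\cA)}$ with $C = C(k,r_0,\{C_l\})$.

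The main obstacle is not the local construction itself---which is the textbook Seeley/Stein extension applied in a single boundary chart---but the bookkeeping of \emph{uniform} constants: every coordinate change, every derivative of a cutoff, and every support-overlap count entering the bound must be traced back to the admissibility data of $\cA$ and $\wt\cA$. Once this uniformity is extracted via Lemma~\ref{lem:part.1} and Definition~\ref{def:admissible}, the estimate $\|\mathcal E\| \le C(k,r_0,\{C_l\})$ follows.
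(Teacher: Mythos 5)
Your proposal is correct and follows essentially the same route as the paper: the paper defines $\mathcal E u(x) := \sum_\alpha \wt\chi_\alpha(x)\,\bigl(\mathcal E_0(\chi_\alpha u_\alpha)\bigr)(\wt\phi_\alpha^{-1}x)$ and bounds the norm exactly as you describe, via the bounded multiplicity $N_0$, the uniform $C^k$-bounds on transition maps and cutoffs from admissibility, and $\|\mathcal E_0\|=C(k,d)$. The only slip is in your interior-chart terms, which should also carry a factor $\wt\chi_\alpha$ (so that $\sum_\alpha\wt\chi_\alpha\chi_\alpha=\sum_\alpha\chi_\alpha^2=1$ on $M$ gives $(\mathcal E u)|_M=u$); as written, $\sum_{\mathrm{int}}\chi_\alpha+\sum_{\bd}\chi_\alpha^2\ne 1$ in general.
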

\begin{proof}
  We set
  \begin{equation}
    \label{eq:def.ext.op}
    \mathcal E u (x) :=
    \sum_{\alpha \in \cA} \widetilde \chi_\alpha(x) \cdot
       \bigl(
          \mathcal E_0 (\chi_\alpha u_\alpha) \bigr)
       (\widetilde \phi_\alpha^{-1} x)
  \end{equation}
  for $x \in X$ and $u \in W^k(M,\cA)$.  Note
  that~\eqref{eq:def.ext.op} is well-defined: In a neighborhood of
  $x$ at most $N_0$ terms are non-zero, so the sum is essentially
  finite.  In addition, $\chi_\alpha u_\alpha \in W^k(\RR^d_+)$ and
  $(\widetilde \chi_\alpha \circ \widetilde \phi_\alpha^{-1}) \cdot
  \mathcal E_0 (\chi_\alpha u_\alpha) \in W^k(\widetilde V_\alpha)$
  with compact support in $\widetilde V_\alpha$.  Finally, $\mathcal E
  u \in W^k(X)$. Clearly, \eqref{eq:def.ext.op} defines an extension
  operator.

  For the norm estimate, we have
  \begin{align*}
    \|\mathcal E u \|^2_{W^k(X,\wt \cA)} &=
    \sum_{\alpha' \in \widetilde \cA}
      \|\widetilde \chi_{\alpha'} (\mathcal E u)_{\alpha'}
           \|^2_{W^k(\widetilde V_{\alpha'})}\\&=
    \sum_{{\alpha'} \in \widetilde \cA}
      \Bigl\| \widetilde \chi_{\alpha'}
         \sum_{\alpha \in \cA}
          \bigl[ \chi_{\alpha} \cdot  \mathcal E_0
                          (\chi_{\alpha} u_{\alpha}) \bigr]
          \circ (\widetilde\phi_{\alpha}^{-1}
              \circ \widetilde \phi_{\alpha'})
      \Bigr\|^2_{W^k(\widetilde V_{\alpha'})}\\& \le
    \sum_{{\alpha'} , \alpha} N_0
      \Bigl\|
        \widetilde \chi_{\alpha'}
        \bigl[ \chi_{\alpha} \cdot  \mathcal E_0
                      (\chi_{\alpha} u_{\alpha}) \bigl]
                 \circ (\widetilde \phi_{\alpha}^{-1} \circ
                       \widetilde \phi_{\alpha'})
       \Bigr\|^2_{W^k(V_{\alpha'})}
  \end{align*}
  where the last sum is taken over all $\alpha \in \cA$, $\alpha' \in
  \widetilde \cA$ such that $\widetilde U_\alpha \cap \widetilde
  U_{\alpha'} \ne \emptyset$. Due to Lemma~\ref{lem:part.1}, there are
  at most $N_0$ indices $\alpha'$ for a fixed $\alpha$, and we can
  estimate the remaining sum (using the product and chain rule) by a
  constant, depending only on $\hat C_k$ and $k$, multiplied with
  $N_0^2 \sum_\alpha \| \mathcal E_0(\chi_\alpha u_\alpha)
  \|^2_{W^k(M,\cA)} \le N_0^2 \| \mathcal E_0 \|^2 \, \|u
  \|^2_{W^k(M,\cA)}$.
\end{proof}


\subsection{Sobolev embedding}

In this  subsection, we show that there is a continous embedding
of the graph Sobolev space defined with respect to the Laplacian
$H:=-\Delta_X \ge 0$ into $\Cb X$, where $\Cb X$ denotes the space of
\emph{bounded} continuous functions on $X$.
\begin{thm}
  \label{thm:sob.emb}
  Suppose that $(X,g)$ is a complete $d$-dimensional manifold with
  sectional curvature $K$ bounded by $|K(x)| \le K_0$ and positive
  injectivity radius $r_0 := \injrad X>0$. Suppose, in addition, that
  $m \ge d/4 + 1$. Then the embedding
  \begin{equation}
    \label{eq:sob.emb}
    H^{2m}(X,-\Delta_X) \to \Cb X
  \end{equation}
  is defined and bounded with norm depending only on $m$, $d =\dim X$,
  $r_0$ and $K_0$.
\end{thm}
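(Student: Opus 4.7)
The plan is to estimate $u(x)$ pointwise via the resolvent kernel of $H:=-\Delta_X$. Writing $u = (H+1)^{-m}f$ with $f := (H+1)^m u \in L^2(X,g)$, the functional calculus expresses $(H+1)^{-m}$ as an integral of the heat semigroup $\mathrm e^{-tH}$ against $t^{m-1}\mathrm e^{-t}/\Gamma(m)$; hence $u$ is represented by the integral kernel
\begin{equation*}
  G_m(x,y) := \frac{1}{\Gamma(m)}\int_0^\infty t^{m-1}\mathrm e^{-t} p_t(x,y) \dd t,
\end{equation*}
where $p_t(\cdot,\cdot)$ is the heat kernel of $H$. Since $G_m$ is selfadjoint, the semigroup identity $G_m^2 = G_{2m}$ gives $\|G_m(x,\cdot)\|_{L^2}^2 = G_{2m}(x,x)$, and Cauchy--Schwarz yields
\begin{equation*}
  |u(x)|^2 \le G_{2m}(x,x)\, \|f\|_{L^2}^2 = G_{2m}(x,x)\, \|u\|_{W^{2m}(X,H)}^2,
\end{equation*}
so the task reduces to bounding $G_{2m}(x,x)$ uniformly in $x$.

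Next, I would invoke a standard on-diagonal heat kernel estimate: under $|K|\le K_0$ and $\injrad X \ge r_0$, there exist constants $C_1,C_2$ depending only on $d,K_0,r_0$ such that
\begin{equation*}
  p_t(x,x) \le C_1 t^{-d/2}\mathrm e^{C_2 t}, \qquad t>0,\; x\in X.
\end{equation*}
Substituting into the defining integral of $G_{2m}(x,x)$ gives
\begin{equation*}
  G_{2m}(x,x) \le \frac{C_1}{\Gamma(2m)} \int_0^\infty t^{2m-1-d/2} \mathrm e^{-(1-C_2)t} \dd t.
\end{equation*}
The hypothesis $m\ge d/4+1$ forces $2m-1-d/2 \ge 1$, so the integrand is integrable near zero; convergence at infinity is ensured by replacing, if $C_2 \ge 1$, the shift $1$ throughout by any $\mu>C_2$ (the norms $\|(H+1)^m\,\cdot\,\|$ and $\|(H+\mu)^m\,\cdot\,\|$ are equivalent with constants depending only on $m$ and $\mu$). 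Continuity of $u$ is automatic, since $x\mapsto G_m(x,\cdot)$ is continuous into $L^2(X,g)$ (the heat kernel is smooth for $t>0$ and the integral defining $G_m$ converges locally uniformly), and hence $u\in\Cb X$ with the announced norm bound.

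The main obstacle is precisely the uniform on-diagonal heat kernel bound $p_t(x,x)\le C_1 t^{-d/2}$ for small $t$. Because the hypotheses control the sectional curvature but \emph{not} its covariant derivatives, the bounded-geometry apparatus of Lemma~\ref{lem:eq.loc.glob} and Theorem~\ref{thm:ell.reg} is not directly applicable once $m$ exceeds $1$. One standard route is a Cheeger--Gromov--Taylor type parametrix on normal coordinate balls of radius $\min(r_0,\pi/(2\sqrt{K_0}))$, where the metric is $C^0$-close to the Euclidean one with constants depending only on $K_0$. Alternatively, one first establishes a uniform local $L^2$-Sobolev inequality on such balls from volume comparison together with a Poincar\'e inequality, and then applies Nash's iteration to deduce the ultracontractive estimate $\|\mathrm e^{-tH}\|_{L^1\to L^\infty}\le C t^{-d/2}$ for $t\in{]}0,1{]}$, which is equivalent to the claimed pointwise bound on $p_t(x,x)$.
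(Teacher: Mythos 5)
Your proof is correct, but it follows a genuinely different path from the paper's. The paper disposes of the theorem in one step by quoting Proposition~1.3 of Cheeger--Gromov--Taylor \cite{CheegerGT-82}: for $r \le \min\{K_0^{-1/2},r_0,1\}$ one has
\begin{equation*}
  |\psi(x)| \le c(d)\sum_{i=0}^m r^{-d/2+i}\,\|\Delta_X^i\psi\|_{L^2(B(x,r),g)},
\end{equation*}
and the right-hand side is then bounded by $c'(d,m)\,r^{-d/2}\,\|(-\Delta_X+1)^m\psi\|_{L^2(X,g)}$ via the spectral calculus; boundedness, continuity and the explicit dependence on $K_0$ and $r_0$ then come for free. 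You instead route everything through the heat semigroup: the Gamma-function formula for $(H+1)^{-m}$, the identity $\|G_m(x,\cdot)\|_{L^2}^2=G_{2m}(x,x)$ and Cauchy--Schwarz reduce the theorem to the uniform on-diagonal bound $p_t(x,x)\le C_1 t^{-d/2}\mathrm e^{C_2 t}$, which you rightly single out as the sole geometric input (the hypotheses control $K$ but not its covariant derivatives, so the bounded-geometry elliptic regularity of Theorem~\ref{thm:ell.reg} is indeed unavailable) and for which you indicate two standard proofs. The two arguments are close relatives --- the quoted Cheeger--Gromov--Taylor inequality is itself obtained by a parametrix/finite-propagation-speed argument under exactly the hypotheses $|K|\le K_0$, $\injrad X\ge r_0$ --- but yours cleanly separates the soft functional analysis from the hard local estimate, needs only $m>d/4$ rather than $m\ge d/4+1$, and transfers verbatim to any setting where ultracontractivity of $\mathrm e^{-tH}$ is known; the paper's citation buys brevity because the inequality it quotes is tailor-made for the statement. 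Two minor points you should tighten: the convergence of the $t$-integral at infinity (your shift of $1$ to $\mu>C_2$ works, as does the monotonicity $p_t(x,x)\le p_1(x,x)$ for $t\ge1$), and the continuity of $x\mapsto G_m(x,\cdot)$ in $L^2$, which is best justified by dominated convergence using $\|p_t(x,\cdot)-p_t(x',\cdot)\|_{L^2}^2=p_{2t}(x,x)-2p_{2t}(x,x')+p_{2t}(x',x')$ rather than by an appeal to ``locally uniform convergence'' of the defining integral.
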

\begin{proof}
  The theorem follows directly from
  \begin{multline*}
    |\psi(x)| \le
    c(d) \sum_{i=0}^m r^{-d/2+i}
         \| \Delta_X^i \psi \|_{L^2(B(x,r),g)}\\ \le
    c'(d,m) r^{-d/2} \| (-\Delta_X+1)^m \psi\|_{L^2(X,g)},
  \end{multline*}
  for $m \ge d/4 + 1$, $x \in X$ and $r \le \min\{K_0^{-1/2}, r_0,1\}$
  (cf.~\cite[Prop.~1.3]{CheegerGT-82}), and the spectral calculus.
  Consequently, we obtain
  \begin{equation*}
    \| \psi \|_\infty \le
    c'(d,m) \max \bigl\{K_0^{d/4},r_0^{-d/2},1 \bigr\} \,
                         \| \psi\|_{W^{2m}(X,-\Delta_X)}.
  \end{equation*}
\end{proof}


\subsection{A Hilbert-Schmidt norm estimate}

The following standard estimate is used in Section \ref{s-TraceRes}.
Since we could not find a reference, we present it with a proof, for
the reader's convenience.

Let $(X,m)$ be an measurable space such that $L^2(X,m)$ is separable.
In addition, let $Y$ be a topological space with finite Borel measure
$m'$. Denote by $\Cb Y$ the space of bounded continuous functions on
$Y$ with supremum norm $\Vert \cdot \Vert_\infty$. We denote $J \colon
\Cb Y \to L^2(Y,m')$ the canonical embedding.
\begin{thm}
  \label{t-HSkernel}
 Assume that
  \begin{equation*}
    K \colon  L^2(X,m) \to \Cb Y
  \end{equation*}
  is a bounded operator.  Then the composition $J K \colon L^2(X,m)
  \to L^2(Y,m')$ is a Hilbert-Schmidt operator with Hilbert-Schmidt
  norm bounded by
  \begin{equation*}
    \Vert J K \Vert_{\cJ_2} \le \Vert K \Vert (m'(Y))^{1/2}.
  \end{equation*}
\end{thm}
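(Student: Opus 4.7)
The plan is to compute the Hilbert--Schmidt norm by expanding along an orthonormal basis, interpret pointwise evaluation after $K$ as a bounded linear functional on $L^2(X,m)$, and then bound things uniformly using Riesz representation. Let $\{e_n\}_{n \in \NN}$ be an orthonormal basis of $L^2(X,m)$, which exists by separability. By the definition of the Hilbert--Schmidt norm,
\begin{equation*}
  \|JK\|_{\cJ_2}^2 = \sum_n \|JKe_n\|_{L^2(Y,m')}^2
                   = \sum_n \int_Y |(Ke_n)(y)|^2 \dd m'(y).
\end{equation*}
Since each $Ke_n$ is bounded and continuous on $Y$, each integrand is measurable and non-negative, so Tonelli's theorem lets us interchange sum and integral:
\begin{equation*}
  \|JK\|_{\cJ_2}^2 = \int_Y \sum_n |(Ke_n)(y)|^2 \dd m'(y).
\end{equation*}

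Next, fix $y \in Y$ and consider the functional $\ell_y\colon L^2(X,m) \to \CC$, $\ell_y(f) := (Kf)(y)$. Because $|\ell_y(f)| \le \|Kf\|_\infty \le \|K\|\, \|f\|_{L^2(X,m)}$, this functional is bounded with $\|\ell_y\| \le \|K\|$. By the Riesz representation theorem there exists $k_y \in L^2(X,m)$ with $\|k_y\|_{L^2(X,m)} = \|\ell_y\| \le \|K\|$ such that $(Kf)(y) = \langle f, k_y \rangle$ for all $f$. Applied to $f = e_n$ and summed using Parseval's identity,
\begin{equation*}
  \sum_n |(Ke_n)(y)|^2 = \sum_n |\langle e_n, k_y \rangle|^2
                      = \|k_y\|_{L^2(X,m)}^2 \le \|K\|^2.
\end{equation*}

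Combining this uniform pointwise bound with the display above and the finiteness of $m'(Y)$ yields
\begin{equation*}
  \|JK\|_{\cJ_2}^2 \le \int_Y \|K\|^2 \dd m'(y) = \|K\|^2 \, m'(Y),
\end{equation*}
which proves both that $JK$ is Hilbert--Schmidt and the stated norm estimate. There is no serious obstacle: the only point requiring care is the justification of swapping the sum and integral, handled cleanly by Tonelli since the integrands are non-negative (and even continuous).
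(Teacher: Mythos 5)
Your proof is correct, and it reaches the conclusion by a more streamlined route than the paper. The shared core is identical: you both view pointwise evaluation after $K$ as a bounded functional $f \mapsto (Kf)(y)$ of norm at most $\Vert K \Vert$, apply Riesz representation to get $k_y \in L^2(X,m)$ with $\Vert k_y \Vert \le \Vert K \Vert$, and obtain the uniform bound $\sum_n |(Ke_n)(y)|^2 = \Vert k_y\Vert^2 \le \Vert K \Vert^2$ via Parseval. Where you diverge is in how Hilbert--Schmidt-ness is concluded. The paper assembles the coefficients $c_n(y) = \la k_y, e_n\ra$ into truncated kernels $k_N(x,y) = \sum_{n\le N} c_n(y) e_n(x)$, shows these form a Cauchy sequence in $L^2(X\times Y)$, and then verifies (via an a.e.\ convergence argument plus continuity of $Kf$) that the resulting integral operator coincides with $JK$; Hilbert--Schmidt-ness then follows from having a square-integrable kernel. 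You instead invoke the basis characterization $\Vert JK \Vert_{\cJ_2}^2 = \sum_n \Vert JKe_n\Vert^2$ (finite iff $JK$ is Hilbert--Schmidt, independent of the basis) and swap sum and integral by monotone convergence, which is legitimate since each $Ke_n$ is continuous hence Borel measurable and the terms are non-negative. Your version buys brevity and avoids the slightly delicate identification $\wt K = JK$; the paper's version buys an explicit $L^2$ kernel for $JK$, which is more information than the theorem statement requires but is sometimes useful downstream. Both are complete proofs.
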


\begin{proof}
  Let $\{\varphi_n\}_n$ be an orthonormal base of $L^2(X,m)$. Since
  for fixed $y \in Y$, the map $L^2(Y,m') \to \CC$, $f \mapsto Kf(y)$
  is a bounded functional, there exists $g_y \in L^2(X,m)$ such that
  $Kf(y) = \langle g_y, f \rangle$ and $\| g_y \| \le \Vert K \Vert$.
  Denoting $c_n(y) := \la g_y, \varphi_n \ra$ the Fourier coefficients
  of $g_y$, we obtain
  \begin{equation*}
    \sum_n |c_n(y)|^2 
    = \Vert g_y \Vert_{L_2}^2 
    = | K g_y(y) | 
    \le \Vert K \Vert \Vert g_y \Vert_{L_2},
  \end{equation*}
  and conclude that $\sum_n |c_n(y)|^2 \le \Vert K \Vert^2$. Moreover,
  we have
  \begin{equation*}
    Kf(y) 
    = \la g_y,f \ra 
    = \sum_n c_n(y) \la \varphi_n, f \ra.
  \end{equation*}
  The function
  \begin{equation*}
    k_N \colon X \times Y \to \mathbb C, \qquad 
    k_N(x,y) := \sum_{n=1}^N c_n(y) \varphi_n(x)
  \end{equation*}
  is obviously measurable and in $L^2(X \times Y)$. Its $L^2$-norm can
  be estimated uniformly as
  \begin{equation*}
    \| k_N \|^2_{L^2(X \times Y)}
    = \sum_{n=1}^N \int_Y |c_n(y)|^2 \dd m'(y) \le \Vert K \Vert^2 m'(Y).
  \end{equation*}
  Similarly, it can be shown that $\{k_N\}_N$ is actually a Cauchy
  sequence in $L^2(X \times Y)$ with limit $k$. Denote the operators
  associated to $k_N$ and $k$ by $K_N$ and $\wt K$, respectively.  It
  remains to show that $\wt K = J K$. For $f
  \in L^2(X,m)$ we have 
  \begin{multline*}
    \Vert \tilde K f - K_N f \Vert^2
    = \int_Y \Big| \int_X (k(x,y) - k_N(x,y)) f(x)
              \dd m(x) \Big|^2 \dd m'(y) \\
   \le \int_Y \int_X |k(x,y) - k_N(x,y)|^2 \dd m(x) \Vert f
      \Vert_{L^2}^2 \dd m'(y)\\
    \le \Vert k- k_N \Vert_{L^2(X \times Y)}^2 \Vert f \Vert_{L^2}^2.
  \end{multline*}
  Passing to a subsequence we conclude that $\lim_{N \to \infty} K_N
  f(y) \to \tilde K f(y)$ for almost all $y \in Y$. On the other hand,
  we have
  \begin{equation*}
     K_N f(y) 
     = \Bigl\langle \sum_{n=1}^N c_n(y) \varphi_n,f \Bigr\rangle 
               \to \la g_y,f \ra
     = (K f) (y)
  \end{equation*}
  for all $y \in Y$ and hence, $\tilde K f(y) = K f(y)$ for almost all
  $y \in Y$. Since $Kf$ is continuous and bounded, we conclude $\wt K
  f = Kf = JKf$.
\end{proof}

\section*{ Acknowledgements}
This work was
supported in part by the DFG through the Sonderforschungsbereich~237,
the Schwerpunktprogramm~1033, and grants no.~Ve~253/1-1 and
Ve~253/2-1.


\newcommand{\etalchar}[1]{$^{#1}$}
\def\cprime{$'$} \def\cprime{$'$}


\end{document}